\documentclass[11pt,a4paper]{scrartcl}
\usepackage[english]{babel}

\usepackage{amsfonts}
\usepackage{amsmath}
\usepackage{amssymb}
\usepackage{amsthm}
\usepackage{graphicx}

\usepackage{enumerate}
\usepackage{verbatim}

\usepackage{
	float,
	xfrac,
	colortbl,
	mathtools,
	pifont,
	mathrsfs
}
\usepackage[symbol]{footmisc}
\usepackage{enumitem}
\usepackage{booktabs}
\usepackage[font=small,skip=4pt]{caption}
\usepackage[font=small]{subcaption}
\usepackage{hyperref}

\usepackage{pgfplots}
\pgfplotsset{compat=1.8}
\usepackage{tikz}
\usepackage{tikz-cd}

\usepackage[a4paper,left=2.6cm,right=2.6cm,top=2.3cm,bottom=2.1cm,includefoot]{geometry}

\newcommand{\sumprime}{\mathop{{\sum}'}\limits}
\newcommand{\sumdprime}{\mathop{{\sum}''}\limits}

\renewcommand{\d}{\,\mathrm{d}}

\newtheorem{theorem}{Theorem}[section]
\newtheorem{lemma}[theorem]{Lemma}
\newtheorem{remark}[theorem]{Remark}

\newtheorem{example}[theorem]{Example}
\newtheorem{corollary}[theorem]{Corollary}

\usepackage{etoolbox}
\BeforeBeginEnvironment{theorem}{\goodbreak}
\BeforeBeginEnvironment{lemma}{\goodbreak}
\BeforeBeginEnvironment{remark}{\goodbreak}
\BeforeBeginEnvironment{definition}{\goodbreak}
\BeforeBeginEnvironment{example}{\goodbreak}
\BeforeBeginEnvironment{corollary}{\goodbreak}
\BeforeBeginEnvironment{proposition}{\goodbreak}

\numberwithin{equation}{section}
\numberwithin{table}{section}
\numberwithin{figure}{section}

\newcommand{\e}{\mathrm e}
\renewcommand{\i}{\mathrm i}
\newcommand{\sinc}{\mathrm{sinc}}
\newcommand{\sign}{\mathrm{sign}}
\newcommand{\arccot}{\mathrm{arccot}}

\newcommand{\R}{\mathbb R}
\newcommand{\C}{\mathbb C}
\newcommand{\Z}{\mathbb Z}
\newcommand{\N}{\mathbb N}

\newcommand{\Rp}{{\mathbb R}_{+}}
\newcommand{\Np}{{\mathbb N}_0}

\newcommand{\W}{{\mathcal W}}
\newcommand{\Wp}{{\mathcal W}_{+}}
\newcommand{\fcos}{{\hat f}_{\cos}}
\newcommand{\fsin}{{\hat f}_{\sin}}
\newcommand{\dpl}{\delta_{+}}
\newcommand{\dch}{\delta_{\mathrm{Ch}}}
\newcommand{\Ecos}{E^{\cos}}
\newcommand{\Esin}{E^{\sin}}
\newcommand{\Ech}{E^{\mathrm{Ch}}}

\newcommand{\Rch}{R^{\mathrm{Ch}}}

\long\def\symbolfootnote[#1]#2{\begingroup%
	\def\thefootnote{\fnsymbol{footnote}}\footnote[#1]{#2}\endgroup}

\allowdisplaybreaks
\title{How well do fast discrete trigonometric transforms\\ work for parametric numerical integration?}
\date{September 14, 2026}
\author{Gerlind Plonka\footnotemark[1], Daniel Potts\footnotemark[2], Manfred Tasche\footnotemark[3]}

\hypersetup{pdfauthor={Gerlind Plonka, Daniel Potts, Manfred Tasche},
	pdftitle={How well do fast discrete trigonometric transforms work for parametric numerical integration?},
	pdfcreator = {pdflatex},
	plainpages=false,
	pdfstartview=FitH,
	pdfview=FitH,
	pdfpagemode=UseOutlines,
	bookmarksnumbered=true,
	bookmarksopen=false,
	bookmarksopenlevel=0,
	colorlinks=true,
	linkcolor=black,
	citecolor=black,
	urlcolor=black}

\begin{document}
	
\maketitle

\footnotetext[1]{plonka@math.uni-goettingen.de, University of G\"ottingen, Institute for Numerical and Applied Mathematics, D--37083 G\"ottingen, Germany}
\footnotetext[2]{potts@math.tu-chemnitz.de, Chemnitz University of
	Technology, Faculty of Mathematics, D--09107 Chemnitz, Germany}
\footnotetext[3]{manfred.tasche@math.uni-rostock.de, University of Rostock, Institute of Mathematics, D--18051 Rostock, Germany}

\begin{abstract}
The fast Fourier transform (FFT) and its real counterparts, the fast algorithms of the discrete
cosine transform (DCT) and the discrete sine transform (DST), are frequently employed for the
numerical evaluation of the Fourier cosine and Fourier sine transform, thereby acting as parametric
quadrature rules based on finitely many samples of a given continuous function. But how accurate
are the obtained results? In this paper we study the error occurring if the DCT of type I (DCT-I)
and the DST of type I (DST-I) are applied to compute
$\int_0^{\infty} f(x)\cos(2\pi xv)\d x$ and $\int_0^{\infty} f(x)\sin(2\pi xv)\d x$
for $f \in L^1(\Rp) \cap C(\Rp)$ and $v \in \Rp$, as well as the error occurring if the Chebyshev
coefficients $\frac{2}{\pi}\int_0^{\pi} h(\cos\theta)\,\cos(n\theta)\d\theta$, $n \in \Np$, of a
function $h \in C(I)$ on $I := [-1,1]$ are computed by the DCT-I.

We present practicable, explicit error estimates under polynomial, exponential, and mixed decay
conditions. For the Fourier cosine and sine transform, rational functions lead to a geometric decay
in the frequency parameter $P$ combined with an algebraic plateau of order $L^{-3/2}$ in the
sampling parameter $L$, while for meromorphic functions such as $\operatorname{sech}(\pi\,\cdot)$
the critical geometric rate is attained in both parameters. For Chebyshev approximation of a
rational function with simple poles in $\C \setminus I$, the quadrature error has the critical
geometric rate $\rho_0^{-P}$ with an explicit constant, where $\rho_0 > 1$ is the parameter
of the largest Bernstein ellipse of analyticity. Several numerical
examples illustrate the theory.
\medskip

\noindent\emph{Key words}: Fourier cosine transform, Fourier sine transform, Chebyshev
coefficients, discrete cosine transform, discrete sine transform, maximum error estimates, Poisson
summation formulae, aliasing formula for Chebyshev coefficients, optimal choice of parameters.
\smallskip

\noindent AMS \emph{Subject Classifications}: \text{42A38, 41A10, 65T40, 65T50, 65D05, 94A20, 43A15}
\end{abstract}

\section{Introduction\label{sec:intro}}

The FFT and its real counterparts are the most important transforms in modern engineering and science. They are frequently used to compute the Fourier transform and  related integral transforms and therefore act as quadrature rules, using  a finite number of function samples.
\smallskip

In this paper, we study the occurring errors when discrete trigonometric transforms  are  employed to compute
three closely related integral
transforms:
the \emph{Fourier cosine
transform} and the \emph{Fourier sine transform} of a complex-valued function
$f \in L^1(\Rp) \cap C(\Rp)$ defined on the half-line $\Rp = [0,\infty)$ (see  \cite{Ob90}),
 and  the Chebyshev coefficients of  a \emph{Chebyshev expansion} of a sufficiently smooth function $h \in C(I)$ on $I := [-1,1]$.
Let
 \begin{align}
	\label{eq:hatfcos}
	\fcos(v) &:=  \textstyle \int\limits_0^{\infty} f(x)\cos(2\pi\,x v)\d x,
	\quad
	\fsin(v) := \int\limits_0^{\infty} f(x)\sin(2\pi\,x v)\d x\,,
	\quad v \in \Rp\,,
\end{align}
and let their discrete numerical approximations be given by
\begin{align}\label{eq:approx}
\textstyle c_{P,L} (v) \!:= \!  \tfrac{f(0)}{2P} + \tfrac{1}{P} \sum\limits_{n=1}^{LP/2} f(\frac{n}{P})  \cos(\tfrac{2\pi n v}{P}),
\quad
s_{P,L} (v) \!:=\!  \tfrac{1}{P} \sum\limits_{n=1}^{LP/2} f(\tfrac{n}{P})  \sin(\tfrac{2\pi n v}{P}),
\; v \in \Rp,
\end{align}
which can be efficiently evaluated simultaneously at the $\frac{LP}{2}+1$ equidistant
points $\frac{j}{L}$, $j = 0,\ldots,\frac{LP}{2}$  using  fast algorithms of discrete cosine transform  of type I (DCT-I) and
of discrete sine transform of type I (DST-I), respectively. The trigonometric polynomials $c_{P,L}$ and $s_{P,L}$ are called \emph{sampling polynomials} of $f$ which depend on the samples $f(\frac{n}{P})$, $n=0, \ldots , \frac{LP}{2}$, on the equidistant grid of $[0, \frac{L}{2}]$, where $L, P \in 2{\mathbb N}$ are sufficiently large.
Thus, these  sampling polynomials act as \emph{parametric} quadrature rules that simultaneously approximate the integrals in (\ref{eq:hatfcos})  for all $v$ in a certain range of  $\Rp$ at the same time!

But, how  well can the integral transforms $\fcos$ and $\fsin$ be approximated by their sampling polynomials? How large are the errors
\begin{align}\label{eq:errors}
E_{P,L}^{\cos}(f)\!:= \textstyle \frac{1}{\sqrt{L}} \max\limits_{v \in [0,\frac{P}{2}]} |\hat{f}_{\cos} (v)\! - \!c_{P,L}(v)|, \quad
E_{P,L}^{\sin}(f)\!:= \textstyle \frac{1}{\sqrt{L}} \max\limits_{v \in [0,\frac{P}{2}]} |\hat{f}_{\sin} (v) \!-\! s_{P,L}(v)|,
\end{align}
and how does the error decay depend on the properties of $f$ and the choice of $L$ and $
P$?

Furthermore we study a related problem, namely how well the Chebyshev coefficients
\begin{align}\label{eq:an1}
\textstyle a_n[h] :=  \textstyle \frac{2}{\pi}\int\limits_{-1}^1 \frac{1}{\sqrt{1 - x^2}}\,h(x)\,T_n(x)\,\d x
	= \frac{2}{\pi}\int\limits_0^\pi h(\cos\theta)\cos(n\theta)\d\theta\,,\quad n \in \Np\,,
\end{align}
of a function $h \in C(I)$, $I := [-1,1]$, can be approximated by the DCT-I. Here $T_n$ denotes the
$n$-th Chebyshev polynomial. Denoting by $a_n^{(P)}[h]$ the discrete Chebyshev coefficients
computed from the $P+1$ samples $h\big(\cos\frac{\pi j}{P}\big)$ by a DCT-I$(P+1)$, and by
$t_{P,L}$ the corresponding truncated Chebyshev series of degree $L \le P$, we study the quadrature
error $\Ech_P(h)$ and the reconstruction error $\Rch_{P,L}(h) = \|h-t_{P,L}\|_{C(I)}$; all these
quantities are defined in Section~\ref{sec:cheb}.

Our investigations are of great practical interest, since the considered transforms are regularly
approximated using fast trigonometric transforms in practice.

\paragraph{Related literature.} General parametric quadrature rules have rarely been
investigated (see, e.g.\ \cite{Ga19}). This paper is inspired by \cite{EhGrKl24}, which studies how
accurately the Fourier transform is approximated by the centered DFT at discrete points, and by
\cite{PoTa26}, where we investigated the maximum-norm error between the Fourier transform and a
complex trigonometric sampling polynomial evaluated by FFT. For earlier error estimates on
computing Fourier transforms or Fourier coefficients via the DFT we refer to
\cite{Auslander89, Becker96, Epstein05}, and for modifications of the Shannon sampling theorem to
\cite{Kircheis22,Kircheis24}. The only other study using Fourier cosine and sine transforms that we
are aware of is \cite{Denich24}, which however uses different quadrature rules not linked to fast
trigonometric transforms and provides error estimates only for analytic functions via contour
integrals. Our parametric quadrature formulas can also be interpreted as trapezoidal rules for
highly oscillatory integrals; the rich literature on such integrals typically considers
non-parametric integrals over finite intervals without connections to fast trigonometric
transforms, see e.g.\ \cite{Huy09, Iserles04, IserlesN04}.

Regarding the computation of Chebyshev coefficients, early foundational work by D.~Elliott
\cite{El63, El64, El65} investigated error bounds of truncated Chebyshev series via contour
integrals, an approach taken up in \cite{Wang17}. The decay of the Chebyshev coefficients is related to the
smoothness of $h$ in \cite{Ma17}, and \cite{Liu19, Po25} study projections onto trigonometric
polynomials resp.\ alternative quadrature rules. All of these works rely on different
approximation polynomials and distinct error-estimation frameworks than the ones presented here.

\paragraph{Organization of this paper.}
 In Section~\ref{sec:FTcos} we study the parametric quadrature of the Fourier cosine and sine transform
in (\ref{eq:hatfcos}) using the polynomials (\ref{eq:approx}). Subsection~\ref{sec:prelim} provides
the two main tools, suitable modifications of the Poisson summation formula and the decay rate of
$f$ outside $[0,\frac L2]$. In Subsection~\ref{sec:error} we provide the main theorem, bounding the
quadrature error by the decay rates of $f$ and of $\fcos$ resp.\ $\fsin$.
Our approach can also be employed in  Subsection~\ref{sec:recon}
to derive a reconstruction of $f$ from the given samples.
In  Subsection~\ref{sec:sharpFT} we show for
rational and meromorphic functions how $L$ and $P$ influence the new estimates.
In Section~\ref{sec:cheb}
we derive an aliasing formula and a Chebyshev decay rate for
Chebyshev coefficients of a smooth function on $[-1,1]$, which lead to corresponding
new quadrature and reconstruction error estimates.
These estimates are shown to be sharp for rational functions.
Section~\ref{sec:numerics} provides numerical examples.

\section{Parametric quadrature of the Fourier cosine and sine transform\label{sec:FTcos}}

We want to answer the question, how well the parametric integrals in (\ref{eq:hatfcos}) can be approximated by a parametric quadrature formula based on DCT or DST and involving only the equidistant function values $f(\tfrac{n}{P})$, $n=0, \ldots , \frac{LP}{2}$, for sufficiently large $L$, $P \in 2\N$.

\subsection{Modified Poisson summation formula and decay rates\label{sec:prelim}}

In this subsection we will provide the two essential ingredients needed to derive the wanted error estimates
for the parametric numerical integration based on trigonometric polynomials, which can be accurately evaluated by the fast algorithms of DCT-I  and DST-I, respectively.

The natural function space for our analysis is the \emph{Wiener amalgam space} on ${\mathbb R}_+$ given by
\begin{align}\label{Wp}
\Wp :=   \textstyle \big\{ f \in C(\Rp):  \sum\limits_{j = 0}^{\infty} \max\limits_{x \in [0,\,1]} |f(x + j)| < \infty \big\}
\; \text{with} \;
\|f\|_{\Wp} := \sum\limits_{j = 0}^{\infty} \max\limits_{x \in [0,\,1]} |f(x + j)|.
\end{align}
Thus $\Wp$ is the Banach space of all $f:\Rp \to \C$ which are locally in $C(\Rp)$ and have
globally an $\ell^1(\Np)$ behavior at infinity. Wiener amalgam spaces on $\R$ were introduced by
N.~Wiener \cite[p.~73]{Wiener} and are convenient function spaces in Fourier analysis, cf.\
\cite{Fe92a}, \cite[pp.~103--105]{G01}, \cite{EhGrKl24}. Membership in $\Wp$ excludes many
pathological functions of little practical interest.
We conclude from (\ref{Wp}) for  $L \in 2 \N$   and $f \in \Wp$ that
\begin{align}\label{l1}
	 \textstyle \sup\limits_{x \in \Rp} \sum\limits_{k = 0}^{\infty} |f(x + Lk)|
	\leq \|f\|_{\Wp} < \infty,
\end{align}
as well as
$\sum\limits_{k = 0}^{\infty} \big|f\big(\tfrac{k}{L}\big)\big|
	\leq L\,\|f\|_{\Wp} < \infty$,
since $L\ge 2$ and there are at most $L$ function values $f\big(\tfrac{k}{L}\big)$ in one interval $[j, j+1)$ for $j \in \Np$,
such that $\big(f\big(\tfrac{k}{L}\big)\bigr)_{k \in \Np} \in \ell^1(\Np)$ for
each $L \in 2 \N$. Obviously we have
$$
\Wp \subset C(\Rp) \cap L^1(\Rp)\,,
$$
and each compactly supported $f \in C(\Rp)$ belongs to $\Wp$.

The common analytical tool for our error estimates in
Subsection~\ref{sec:error} will be the Poisson summation
formula, which we adapt to the Fourier cosine and Fourier sine transform.

\begin{lemma}
	\label{Lemma:FTPoissonsum}
	Let $L, P \in 2\N$. For $f, g \in \W_+$ with $\hat{f}_{\cos}, \, \hat{g}_{\sin} \in \W_+$ and $g(0)=0$ we have for $x \in \big[0,\,\tfrac{L}{2}\big]$
	\begin{align} \label{eq:Poisson1}
		 \textstyle f(x) + \sum\limits_{k = 1}^{\infty}\big[ f(x + kL) + f(kL-x)\big]
             &=  \textstyle \frac{2}{L}\,\fcos(0) + \frac{4}{L}\,\sum\limits_{n = 1}^{\infty}\fcos\big(\tfrac{n}{L
          }\big)\, \cos\frac{2\pi nx}{L}\, ,
		\\
		\label{eq:Poisson3}
		 \textstyle g(x) + \sum\limits_{k = 1}^{\infty}\big[ g(x + kL) - g(kL-x)\big]
             &=  \textstyle  \frac{4}{L}\,\sum\limits_{n = 1}^{\infty} \hat{g}_{\sin}\big(\tfrac{n}{L}\big)\, \sin\frac{2\pi nx}{L}\,,
             \end{align}
             and for $v \in \big[0,\,\tfrac{P}{2}\big]$
\begin{align}
		\label{eq:Poisson2}
		 \textstyle  \fcos(v) + \sum\limits_{k = 1}^{\infty} \big[\fcos(v + kP)+ \fcos(kP-v)\big]
                         &=  \textstyle  \frac{1}{2P}\,f(0) + \frac{1}{P}\sum\limits_{n = 1}^{\infty}f\big(\tfrac{n}{P}\big)\, \cos \frac{2\pi nv}{P}\,,\\
                         \label{eq:Poisson4}
                 \textstyle   \hat{g}_{\sin}(v) + \sum\limits_{k = 1}^{\infty} \big[ \hat{g}_{\sin}(v + kP)-  \hat{g}_{\sin}(kP-v)\big]
                         &=  \textstyle  \frac{1}{P}\sum\limits_{n = 1}^{\infty}g\big(\tfrac{n}{P}\big)\, \sin \frac{2\pi nv}{P}\,,
	\end{align}	
		and all series converge absolutely and uniformly.
\end{lemma}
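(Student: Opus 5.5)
The natural route is to reduce all four identities to the classical Poisson summation formula on $\R$ for the Wiener amalgam space $\W$, via even and odd extensions. For $f \in \Wp$ let $F(x) := f(|x|)$ be the even extension to $\R$. Then $F$ lies in the amalgam space $\W$ on $\R$, and its Fourier transform $\hat F(v) = \int_\R F(x)\,\e^{-2\pi\i xv}\d x$ equals $2\fcos(|v|)$, which belongs to $\W$ because $\fcos \in \Wp$. For $g \in \Wp$ with $g(0)=0$ let $G(x) := \sign(x)\,g(|x|)$ be the odd extension. The condition $g(0)=0$ is exactly what makes $G$ continuous, so $G \in \W$, and $\hat G(v) = -2\i\,\sign(v)\,\fsin(|v|)$, again in $\W$.

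With these extensions in hand, I will apply the Poisson summation formula in both of its forms for $\Phi \in \W$ with $\hat\Phi \in \W$:
\begin{align*}
\textstyle \sum\limits_{k\in\Z}\Phi(x+kL) = \frac1L\sum\limits_{n\in\Z}\hat\Phi\big(\tfrac nL\big)\,\e^{2\pi\i nx/L},\qquad
\sum\limits_{k\in\Z}\hat\Phi(v+kP) = \frac1P\sum\limits_{n\in\Z}\Phi\big(\tfrac nP\big)\,\e^{-2\pi\i nv/P}.
\end{align*}
Both sides converge absolutely and uniformly: the periodizations converge by (\ref{l1}) applied on $\R$, and the sampled series converge by the $\ell^1$ bound for samples of $\W$ functions noted after (\ref{l1}). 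Since both sides are continuous and periodic with absolutely summable Fourier coefficients, the identity holds pointwise. This is standard and is presumably how \cite{PoTa26} and \cite{EhGrKl24} use it, so I will cite it rather than reprove it.

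The remaining work is bookkeeping. For $x \in [0,\frac L2]$ and $k \ge 1$, the point $x-kL$ is nonpositive, so $F(x-kL) = f(kL-x)$. Pairing $k$ with $-k$ on the left gives exactly the left side of (\ref{eq:Poisson1}). Pairing $n$ with $-n$ on the right turns $\frac1L\sum_n 2\fcos(\frac{|n|}L)\e^{2\pi\i nx/L}$ into $\frac2L\fcos(0) + \frac4L\sum_{n\ge1}\fcos(\frac nL)\cos\frac{2\pi nx}L$. For $G$ the same pairing gives $g(x+kL) - g(kL-x)$ on the left and $\frac1L\sum_{n\ge1}(-2\i)\fsin(\frac nL)\cdot 2\i\sin\frac{2\pi nx}{L} = \frac4L\sum_{n\ge1}\fsin(\frac nL)\sin\frac{2\pi nx}L$ on the right, which is (\ref{eq:Poisson3}).

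For (\ref{eq:Poisson2}) I will use the second form with $\Phi = F$. The left side is $2\big[\fcos(v)+\sum_{k\ge1}(\fcos(v+kP)+\fcos(kP-v))\big]$ for $v\in[0,\frac P2]$, and the right side is $\frac1P\big[f(0)+2\sum_{n\ge1}f(\frac nP)\cos\frac{2\pi nv}P\big]$. Dividing by $2$ gives (\ref{eq:Poisson2}). For (\ref{eq:Poisson4}) I will take $\Phi=G$. The left side becomes $-2\i\big[\fsin(v)+\sum_{k\ge1}(\fsin(v+kP)-\fsin(kP-v))\big]$, the right side becomes $\frac1P\sum_{n\ge1}g(\frac nP)(-2\i\sin\frac{2\pi nv}P)$, and dividing by $-2\i$ finishes. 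At $v=0$ the value $\hat G(0)=0$ is consistent with $\sign(0)=0$.

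I expect the main obstacle to be justifying the hypotheses of the Poisson formula, in particular continuity of $G$ and $\hat G$ at the origin together with their membership in $\W$. Continuity of $G$ at $0$ comes from $g(0)=0$. For $\hat G$ we have $\fsin(0)=0$ automatically, so $\hat G$ is continuous, and it lies in $\W$ because $\fsin \in \Wp$. Absolute and uniform convergence of every series in the lemma is then inherited from the corresponding series on $\Z$.
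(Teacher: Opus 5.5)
Your proof is correct and is essentially the paper's argument. The paper applies the two classical Poisson summation formulas to the single function $h=\tfrac12(F+G)$ in your notation. You apply them to the even extension $F$ and the odd extension $G$ separately, which writes out the even/odd bookkeeping the paper leaves implicit; the one slip is notational, since in $\hat G$ and in the sine identities the transform should be $\hat g_{\sin}$ rather than $\fsin$.
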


\begin{proof}
Let $h$ be defined on $\R$ by $h(x) := \frac12 (f(x)+g(x))$ and $h(-x) := \frac12 (f(x)-g(x))$
for $x \in \mathbb R_{+}$, so that $f$ is the even and $g$ the odd part of $h$ on ${\mathbb R}_+$. We observe that
$$
\textstyle  \hat{h}(v) = \int_{\mathbb R} h(x) \, \e^{-2\pi \i xv} \d x = \fcos(v) - \i\, \hat{g}_{\sin}(v), \quad
\hat{h}(-v) =  \fcos(v) + \i\, \hat{g}_{\sin}(v), \quad v \in \Rp.
$$
 In particular, $h$ satisfies $ \sum\limits_{j \in \mathbb Z} \max\limits_{x \in [0,\,1]} |h(x + j)| < \infty$
as well as $ \sum\limits_{j \in \mathbb Z} \max\limits_{v \in [0,\,1]} |\hat{h}(v + j)| < \infty$,
and the formulas of
Lemma \ref{Lemma:FTPoissonsum} follow from the Poisson summation formulas
\begin{align*}
\textstyle \sum\limits_{k\in\Z} h(x+kL) = \frac{1}{L}\sum\limits_{n\in\Z}
		\hat{h}\!\left(\frac{n}{L}\right)\e^{2\pi\i\,nx/L}\,,
		\quad
		\sum\limits_{k\in\Z} \hat{h}(v+kP) = \frac{1}{P}\sum\limits_{n\in\Z}
		h\!\left(\frac{n}{P}\right)\e^{-2\pi\i\,nv/P}
	\end{align*}
for $x, v \in {\mathbb R}$, which can be found e.g.\ in \cite[pp.~15--16]{G96}  or \cite[Theorem~2.28]{PlPoStTa23}.
	\end{proof}

\noindent
We introduce
the \emph{(one-sided) decay rate of}  $f \in \Wp$
\emph{with respect to the step size} $L \in 2\N$ by
\begin{equation}
\label{eq:decayrate+}
\textstyle \dpl(f,L) := \max\limits_{|x| \leq L/2} \sum\limits_{k = 1}^{\infty} |f(x + k L)|\,
\end{equation}
to measure the decay of $f$ outside the interval $[0, \frac{L}{2})$.
By (\ref{l1}) we have $\dpl(f,L) < \|f\|_{\Wp} < \infty$ for all $L \in 2{\mathbb N}$.
The two decay
classes being most relevant for us are covered by the following two lemmas.

\begin{lemma}
	\label{Lemma:polydecay}
	Let $L \in 2\N$ be given. Assume that $f \in C(\Rp)$ has \emph{polynomial
	decay}
	$$
	\textstyle \sup\limits_{x \in \Rp} |f(x)|\,(1 + x)^a \leq c < \infty\,,
	\quad a > 1\,.
	$$
	Then $f \in \Wp$ and
	\begin{equation}
		\label{eq:deltapoly}
	\textstyle	\dpl(f,L) \leq c\,(2^a - 1)\zeta(a)\,L^{-a}\,,
	\end{equation}
	where $\zeta$ denotes the \emph{Riemann zeta function}
	$$
	\textstyle \zeta(a) := \sum\limits_{n=0}^{\infty} (n+1)^{-a} \leq 1 + \frac{1}{a-1}\,,
	\quad a > 1\,.
	$$
\end{lemma}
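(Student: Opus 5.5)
The plan is to estimate each term of the series in \eqref{eq:decayrate+} pointwise using the polynomial decay hypothesis, and then sum an odd-indexed zeta series in closed form.

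\textbf{Membership in $\Wp$.} For $x \in [0,1]$ and $j \in \Np$ we have $1+x+j \ge 1+j$. The hypothesis then gives $\max_{x\in[0,1]}|f(x+j)| \le c\,(1+j)^{-a}$. Summing over $j$ yields $\|f\|_{\Wp} \le c\,\zeta(a) < \infty$ since $a>1$, so $f \in \Wp$.

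\textbf{Bound on $\dpl(f,L)$.} Fix $x$ with $|x| \le L/2$ and $k \ge 1$. Then
$$x+kL \ge kL - \tfrac L2 = \big(k-\tfrac12\big)L \ge \tfrac L2 > 0,$$
so the point lies in $\Rp$ and the decay hypothesis applies. Discarding the $1$ in $(1+\cdot)^{a}$, we obtain
$$|f(x+kL)| \le c\,\big((k-\tfrac12)L\big)^{-a} = c\,2^a L^{-a}(2k-1)^{-a}.$$
This bound is independent of $x$, so summing over $k\ge1$ and taking the maximum over $|x|\le L/2$ gives
$$\dpl(f,L) \le c\,2^a L^{-a}\sum_{k=1}^\infty (2k-1)^{-a}.$$
The remaining step is the standard identity for the odd part of the zeta series. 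Splitting $\zeta(a)=\sum_{m\ge1}m^{-a}$ into odd and even $m$ gives
$$\sum_{k\ge1}(2k-1)^{-a} = \zeta(a) - 2^{-a}\zeta(a) = (1-2^{-a})\zeta(a).$$
Hence $\dpl(f,L) \le c\,(2^a-1)\,\zeta(a)\,L^{-a}$, which is \eqref{eq:deltapoly}.

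\textbf{Bound on $\zeta$.} Since $t^{-a}$ is decreasing, comparing with an integral gives $\sum_{n\ge2} n^{-a} \le \int_1^\infty t^{-a}\d t = \frac{1}{a-1}$. Therefore $\zeta(a) \le 1 + \frac1{a-1}$.

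No step is a real obstacle. The only point needing care is the lower bound $x+kL \ge (k-\tfrac12)L$ for negative $x$, which guarantees that all arguments stay in $\Rp$, where $f$ is defined.
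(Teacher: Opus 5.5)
Your proof is correct and follows essentially the same route as the paper: you bound each term by $c\,\big((k-\tfrac12)L\big)^{-a}$ uniformly in $x$ and sum the resulting odd-index series to $(2^a-1)\zeta(a)$. The paper obtains this constant by citing the Hurwitz zeta identity $\zeta(a,\tfrac12)=(2^a-1)\zeta(a)$, whereas you derive it by splitting $\zeta(a)$ into odd and even terms, and you also verify the bound $\zeta(a)\le 1+\frac{1}{a-1}$, which the paper states but does not prove.
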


\begin{proof} The assumption on  the  decay of $f$ implies  for all $k \in \N$ and
$|x| < \frac{L}{2}$
$$
\textstyle \big|f(x + kL)\big| \leq c\,\big(1 + x + kL\big)^{-a}
\leq c\,\big(1 + (k - \frac{1}{2})L\big)^{-a}\,,
$$
and hence
\begin{eqnarray*}
	\dpl(f,L) &\leq&  \textstyle c\, \sum\limits_{k=1}^{\infty}
	\big(1 +  (k - \frac{1}{2}) L\big)^{-a} =
	c\,L^{-a}\,\sum\limits_{k=1}^{\infty} \big(k - \frac{1}{2} + \tfrac{1}{L}\big)^{-a} \\
	&<&  \textstyle c\,L^{-a}\sum\limits_{n=0}^{\infty} \big(n + \tfrac{1}{2}\big)^{-a}
	= c\,\zeta\big(a, \tfrac{1}{2}\big)\,L^{-a}
\end{eqnarray*}
with the \emph{Hurwitz zeta function} $\zeta\big(a, \frac{1}{2}\big) :=
\sum\limits_{n=0}^{\infty}\bigl(n + \frac{1}{2}\bigr)^{-a} = (2^a - 1)\,\zeta(a)$, see e.g.
\cite[Formula 25.11.11]{DLMF}.
In particular,
$$
\textstyle \| f\|_{\Wp} = \sum\limits_{k=0}^{\infty} \max\limits_{x\in [0,\,1]} |f(x+k)| \leq c\,\sum\limits_{k=0}^{\infty} (k+1)^{-a} = c\,\zeta(a) < \infty\,.
$$
finishes the proof. \end{proof}

\begin{lemma}
	\label{Lemma:expodecay}
	Let $L \in 2\N$ be given. Let $f \in C(\Rp)$ have \emph{exponential decay}
	$$
	\textstyle \sup\limits_{x \in \Rp} |f(x)|\,\e^{r\,x^{\alpha}}
	\leq c < \infty\,, \quad r,\, \alpha > 0\,.
	$$
	Then $f \in \Wp$ and, with the abbreviation $u := r\,\big(\frac{L}{2}\big)^{\alpha}$ and the
	upper incomplete gamma function $\Gamma\big(\tfrac{1}{\alpha},u\big) := \int_u^{\infty} t^{\frac{1}{\alpha}-1}\,\e^{-t}\,\d t$  satisfies
	\begin{equation}
		\label{eq:deltaexpo}
		\textstyle \dpl(f,L) \leq  c\,\e^{-u}   + \frac{c}{\alpha\,L\,r^{1/\alpha}}\,\Gamma\big(\tfrac{1}{\alpha},\,u\big).
	\end{equation}
In particular, if $\alpha$, $r$ and $L$ satisfy $2\alpha\big(u+1-\frac1\alpha\big)\ge 1$, we have
\begin{align}\label{eq:deltaexpo1}
\textstyle  \dpl(f,L) \leq  2c\,\e^{-r\, \big(\frac{L}{2}\big)^{\alpha}}.
\end{align}
\end{lemma}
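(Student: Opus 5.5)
For $|x|\le L/2$ and $k\ge1$ we have $x+kL\ge (k-\tfrac12)L\ge L/2$, and the exponential weight is monotone, so the hypothesis gives
\[
\textstyle |f(x+kL)|\le c\,\e^{-r((k-\frac12)L)^{\alpha}},\qquad
\dpl(f,L)\le c\sum\limits_{k=1}^{\infty} \e^{-r((k-\frac12)L)^{\alpha}} .
\]
The $k=1$ term is exactly $c\,\e^{-u}$ with $u=r(L/2)^\alpha$.

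For the tail $k\ge2$ we use that $t\mapsto \e^{-r((t-\frac12)L)^\alpha}$ is decreasing. This gives
\[
\textstyle \sum\limits_{k\ge2}\e^{-r((k-\frac12)L)^\alpha}\le \int_1^\infty \e^{-r((t-\frac12)L)^\alpha}\d t .
\]
Substituting $s=(t-\frac12)L$ and then $\tau=r s^\alpha$, so that $s=(\tau/r)^{1/\alpha}$ and $\d s=\frac{1}{\alpha r^{1/\alpha}}\tau^{\frac1\alpha-1}\d\tau$, the integral becomes
\[
\textstyle \frac1L\int_{L/2}^\infty \e^{-rs^\alpha}\d s=\frac{1}{\alpha L r^{1/\alpha}}\Gamma(\tfrac1\alpha,u),
\]
which proves \eqref{eq:deltaexpo}. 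That $f\in\Wp$ follows the same way: $\max_{[j,j+1]}|f|\le c\,\e^{-rj^\alpha}$, and this sequence is summable.

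For \eqref{eq:deltaexpo1} it suffices to show that the second term is at most $c\,\e^{-u}$, i.e.
\[
\textstyle \Gamma(\tfrac1\alpha,u)\le \alpha L r^{1/\alpha}\e^{-u}=2\alpha\, r^{1/\alpha}\tfrac L2\,\e^{-u}=2\alpha\,u^{1/\alpha}\e^{-u}.
\]
This step is the main obstacle: we need an incomplete gamma bound of the form $\Gamma(b,u)\le u^{b-1}\e^{-u}\cdot\frac{u}{u+1-b}$, with $b=1/\alpha$. I would derive it by integrating by parts, $\Gamma(b,u)=u^{b-1}\e^{-u}+(b-1)\Gamma(b-1,u)$, and then splitting into two cases.

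\textbf{Case $b\le1$.} The remainder term is nonpositive, so $\Gamma(b,u)\le u^{b-1}\e^{-u}$.

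\textbf{Case $b>1$.} Use $t^{b-1}=u^{b-1}(t/u)^{b-1}\le u^{b-1}\e^{(b-1)(t-u)/u}$. This gives $\Gamma(b,u)\le u^{b-1}\e^{-u}\frac{u}{u-(b-1)}$ whenever $u>b-1$. Note that $u>b-1$ follows from the hypothesis, which forces $u+1-b\ge\frac1{2\alpha}>0$.

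In both cases the required inequality then reduces to
\[
\textstyle \frac{u^{b}}{u+1-b}\le 2\alpha\,u^{b}
\iff 2\alpha(u+1-\tfrac1\alpha)\ge1,
\]
which is exactly the hypothesis. In the case $b\le1$ one needs $u^{b-1}\le 2\alpha u^b$, i.e. $2\alpha u\ge1$, and this follows from the hypothesis since $1-\frac1\alpha\le0$ there would otherwise need checking. Actually $2\alpha u\ge 2\alpha(u+1-\frac1\alpha)$ only when $b\ge1$, so for $b<1$ I would instead use the sharper bound $\Gamma(b,u)\le u^{b-1}\e^{-u}\frac{u}{u+1-b}$, valid for all $u>0$ when $b\le 1$. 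This standard bound follows from $\int_u^\infty t^{b-1}\e^{-t}\d t\le \int_u^\infty u^{b-1}(1+\tfrac{t-u}{u})^{b-1}\e^{-t}\d t$ together with a convexity estimate. Adding the two terms then gives $2c\,\e^{-u}$.
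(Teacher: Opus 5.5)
Your proof of \eqref{eq:deltaexpo} and of $f\in\Wp$ is the paper's argument: the majorant $c\,\e^{-r((k-\frac12)L)^\alpha}$, the integral test, and the substitution. For $\alpha\le1$ your proof of \eqref{eq:deltaexpo1} is also correct. The paper simply quotes \cite[Proposition~2.7]{Pi20} for $\Gamma(b,u)\le\frac{u^{b}}{u+1-b}\,\e^{-u}$ with $b=\frac1\alpha$. You instead derive this bound for $b>1$, $u>b-1$ from $(t/u)^{b-1}\le\e^{(b-1)(t-u)/u}$, and for $b=1$ it is an equality.

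The case $\alpha>1$, i.e.\ $b<1$, has a genuine gap, and it cannot be closed. The bound you invoke there actually holds in the opposite direction. For $F(u):=\frac{u^{b}\e^{-u}}{u+1-b}$ one computes
\[
\frac{\mathrm d}{\mathrm du}\bigl(F(u)-\Gamma(b,u)\bigr)=\frac{(1-b)\,u^{b-1}\e^{-u}}{(u+1-b)^{2}}>0 ,
\]
and $F(u)-\Gamma(b,u)\to0$ as $u\to\infty$. Hence $\Gamma(b,u)>F(u)$ for all $u>0$; for example, $\Gamma(\frac12,0.01)\approx1.57$ against $F(0.01)\approx0.19$. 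Your sketch does not produce the bound either: the first ``inequality'' in it is an identity, and convexity of $y\mapsto(1+y)^{b-1}$ only gives lower bounds via tangent lines.

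Worse, \eqref{eq:deltaexpo1} itself is false for $\alpha>1$. Take $f(x)=\e^{-rx^2}$ with $c=1$, $\alpha=2$, $L=2$. Then $u=r$, and the hypothesis $4(r+\frac12)\ge1$ holds for every $r>0$. Yet for $r=0.01$, choosing $x=-1$ in the definition of $\dpl$ gives $\dpl(f,2)\ge\e^{-0.01}+\e^{-0.09}+\e^{-0.25}\approx2.68>2\e^{-0.01}$.

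The paper's proof contains the same error. It applies the cited bound under the sole condition $u>\frac1\alpha-1$, which imposes nothing and gives the wrong direction when $\alpha>1$. Your observation that the hypothesis does not force $2\alpha u\ge1$ when $\alpha>1$ is exactly where the statement breaks. What your first estimate $\Gamma(b,u)\le u^{b-1}\e^{-u}$ (valid for $b\le1$) does prove is \eqref{eq:deltaexpo1} under $2\alpha u\ge1$. So the correct hypothesis is $2\alpha\min\{u,\,u+1-\frac1\alpha\}\ge1$, which coincides with the stated one for $\alpha\le1$. The paper's concrete applications only use exponent $1$ and are unaffected.
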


\begin{proof} The assumed  exponential decay of $f$ implies for all $k \in \N$ and
$|x| \leq \frac{L}{2}$ that
$$
\textstyle \bigl|f(x + k L)\bigr| \leq c\,\e^{-r\,((k- \frac{1}{2})L)^{\alpha}}\,.
$$
Since $t \mapsto \e^{-r((t-\frac12)L)^{\alpha}}$ is decreasing on $[1,\infty)$, the
integral test yields
\begin{align*}
	\textstyle \dpl(f,L) &\leq \textstyle
	c\,\sum\limits_{k=1}^{\infty}\e^{-r\,((k- \frac{1}{2})L)^{\alpha}}
	\;\le\; \textstyle c\, \e^{-r \big(\frac{L}{2}\big)^\alpha} + c\int\limits_{1}^{\infty}\e^{-r\,((t-\frac12)L)^{\alpha}}\,\d t\\
	& = \textstyle c\,\e^{-u} + \frac{c}{L}\int\limits_{L/2}^{\infty}\e^{-r\,y^{\alpha}}\,\d y
	= c\,\e^{-u} + \frac{c}{\alpha\,L\,r^{1/\alpha}}\,\Gamma\big(\tfrac1\alpha,\,u\big)\,.
\end{align*}
This proves
\eqref{eq:deltaexpo}. If $u > \frac1\alpha - 1$, then \cite[Proposition 2.7]{Pi20} gives
$\Gamma\big(\frac1\alpha, u\big) \le \frac{u^{1/\alpha}}{u+1-\frac1\alpha}\,\e^{-u}$.
Inserting $u^{1/\alpha} = r^{1/\alpha}\frac{L}{2}$

yields \eqref{eq:deltaexpo1}.
In particular,
$$
\textstyle \| f\|_{\Wp} \leq c \,\sum\limits_{k=0}^{\infty} \e^{-r k^{\alpha}} \leq c + c\,\int\limits_0^{\infty} \e^{- r t^{\alpha}}\,\d t
	= c + \frac{c}{\alpha}\,r^{-1/\alpha}\,\Gamma\big(\tfrac{1}{\alpha}\big) < \infty\,,
$$
implies  that  $f \in \Wp$.
Finally, the condition $2\alpha\big(u+1-\frac1\alpha\big) \ge 1$ is satisfied for
sufficiently large $L$ and arbitrary $r$, $\alpha > 0$.
\end{proof}

\medskip

\subsection{Scaled parametric quadrature error}
\label{sec:error}

For $f \in \Wp$ with $\fcos \in \Wp$ and  $L$, $P \in 2\N$, we choose the  parametric quadrature rule for $\fcos$ and $\fsin$ in (\ref{eq:hatfcos})
as $c_{P,L}$  and $s_{P,L}$ in (\ref{eq:approx}) respectively.
We call $c_{P,L}$ and $s_{P,L}$ the $P$-\emph{periodic sampling cosine polynomial and sine polynomial of} $f$ \emph{of
degree} $LP/2$.
Observe that $c_{P,L}$ and $s_{P,L}$ can also be seen as
 the truncations of the right-hand side of the Poisson summation formula \eqref{eq:Poisson2} and  \eqref{eq:Poisson4}, respectively.
We want to show that under
suitable decay conditions on $f$ and $\fcos$ resp.\ $\fsin$, the polynomials $c_{P,L}$ and $s_{P,L}$ provide very
good uniform approximations of $\fcos$ resp.\  $\fsin$ on the compact interval $[0,\, \frac{P}{2}]$ and
consider the  \emph{scaled parametric quadrature errors} $\Ecos_{P,L}(f)$ and $\Esin_{P,L}(f)$ in (\ref{eq:errors}).

The next theorem shows that the quadrature error can be directly bounded by the decay rates of $f$ on the one hand and $\fcos$ and  $\fsin$, respectively, on the other hand, where
the scaling by $L^{-1/2}$ balances the two error contributions.

\begin{theorem}
\label{Thm:samplingcospolynomial}
Let $L$, $P \in 2 \N$ be fixed and let $f \in \Wp$ with $\fcos \in \Wp$ be given.
Then $\fcos$ can be uniformly approximated on $[0,\,P/2]$ by the $P$-periodic
sampling cosine polynomial $c_{P,L}$ in  \eqref{eq:approx}
and  the error
estimate
\begin{equation}
\label{eq:cosmain}
 \textstyle \Ecos_{P,L}(f) =  \textstyle \frac{1}{\sqrt{L}} \max\limits_{v \in [0,\frac{P}{2}]} |\hat{f}_{\cos} (v)\! - \!c_{P,L}(v)|\leq \frac{2}{\sqrt L}\,\dpl\big(\fcos, P\big) + \sqrt L \,\dpl(f,L)
\end{equation}
holds with the (one-sided) decay rates defined in \eqref{eq:decayrate+} for $f$ and $\fcos$.

Furthermore, for $f \in \Wp$ with $\fsin \in \Wp$ and $f(0)=0$  we similarly  have
\begin{equation}
\label{eq:sinmain}
 \textstyle \Esin_{P,L}(f)= \textstyle \frac{1}{\sqrt{L}} \max\limits_{v \in [0,\frac{P}{2}]} |\hat{f}_{\sin} (v)\! - \!s_{P,L}(v)|
  \leq \frac{2}{\sqrt L}\,\dpl\big(\fsin, P\big) + \sqrt L \,\dpl(f,L).
\end{equation}
The additional assumption $f(0)=0$ in the sine case is needed for the continuity of the odd
extension of $f$ on $\R$.
\end{theorem}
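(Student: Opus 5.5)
The plan is to start from the Poisson summation formula \eqref{eq:Poisson2} of Lemma~\ref{Lemma:FTPoissonsum}, which applies because $f, \fcos \in \Wp$. For $v \in [0,\frac P2]$ we subtract $c_{P,L}(v)$, which is the truncation of the right-hand side of \eqref{eq:Poisson2} at $n = LP/2$. This gives the exact error representation
\begin{align*}
\fcos(v) - c_{P,L}(v) = -\sum_{k=1}^{\infty}\big[\fcos(v+kP)+\fcos(kP-v)\big] + \tfrac1P\sum_{n > LP/2} f\big(\tfrac nP\big)\cos\tfrac{2\pi n v}{P}.
\end{align*}
The two pieces are then bounded separately: the first by the decay rate of $\fcos$, the second by the decay rate of $f$.

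For the first piece, take $|x| \le P/2$ in \eqref{eq:decayrate+} with $x = v$ and with $x = -v$. This yields $\sum_{k\ge1}|\fcos(v+kP)| \le \dpl(\fcos,P)$ and $\sum_{k\ge1}|\fcos(-v+kP)| \le \dpl(\fcos,P)$. The first piece is therefore at most $2\,\dpl(\fcos,P)$ in absolute value, and after scaling by $L^{-1/2}$ it contributes $\frac{2}{\sqrt L}\dpl(\fcos,P)$.

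The second piece is the main obstacle: we have to bound $\frac1P\sum_{n>LP/2}|f(n/P)|$ by $L\,\dpl(f,L)$. The idea is to write each $n > LP/2$ uniquely as $n = m + kLP$ with $k \ge 1$ and $m \in \{-\frac{LP}{2}+1,\dots,\frac{LP}{2}\}$. Then $x = m/P$ satisfies $|x| \le L/2$ and $n/P = x + kL$. Regrouping gives
\begin{align*}
\tfrac1P\sum_{n>LP/2}\big|f\big(\tfrac nP\big)\big| = \tfrac1P\sum_{m}\sum_{k\ge1}\big|f\big(\tfrac mP + kL\big)\big| \le \tfrac1P\cdot LP\cdot \dpl(f,L) = L\,\dpl(f,L),
\end{align*}
since there are exactly $LP$ admissible values of $m$. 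Rearranging the terms is justified by absolute convergence, since $f \in \Wp$ and \eqref{l1} applies. After scaling by $L^{-1/2}$ this piece contributes $\sqrt L\,\dpl(f,L)$, which completes \eqref{eq:cosmain}.

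The sine case \eqref{eq:sinmain} goes the same way, using \eqref{eq:Poisson4} with $g = f$; this requires $f(0)=0$, which is exactly the hypothesis of the lemma. The only change is the sign in front of $\fsin(kP-v)$, and that disappears once we take absolute values. The tail $\frac1P\sum_{n>LP/2} f(n/P)\sin\frac{2\pi nv}{P}$ is bounded by the identical regrouping argument.
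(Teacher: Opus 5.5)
Your proposal is correct and follows the paper's proof. The paper makes the same split of \eqref{eq:Poisson2} into the aliasing sum over $\fcos(kP\pm v)$, bounded by $2\,\dpl(\fcos,P)$, and the sampling tail over $n>LP/2$, which it regroups via $n=m+kLP$ with the $LP$ residues $m=1-\frac{LP}{2},\ldots,\frac{LP}{2}$ to obtain $L\,\dpl(f,L)$, treating the sine case analogously with \eqref{eq:Poisson4}.
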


\begin{proof} Splitting the two series in the Poisson summation formula
\eqref{eq:Poisson2} into the principal terms and the remainders, we obtain
$\fcos(v) + \sigma_1(v)= c_{P,L}(v) + \sigma_2(v)$ for all $v \in [0, \frac{P}{2}]$ with
$$
\textstyle \sigma_1(v) := \sum\limits_{k =1}^{\infty} \big[\fcos(v + k P) + \fcos(k P- v)\big]\,,\qquad
\sigma_2(v) := \frac{1}{P}\sum\limits_{n = 1 + LP/2}^{\infty} f\big(\tfrac{n}{P}\big)\,\cos \frac{2\pi nv}{P}\,,
$$
and hence $\sqrt L\,\Ecos_{P,L}(f) \leq \max\limits_{v \in [0,P/2]} |\sigma_1(v)| +
\max\limits_{v \in [0,P/2]} |\sigma_2(v)|$. The definition of $\dpl\big(\fcos,
P\big)$ implies
$$
\textstyle \max\limits_{v \in [0,\, P/2]} |\sigma_1(v)| \leq \max\limits_{v \in [0,\, P/2]} \sum\limits_{k=1}^{\infty}
\big|\fcos(v + k P)\big| + \max\limits_{v \in [0,\, P/2]} \sum\limits_{k=1}^{\infty} \big|\fcos(k P - v)\big|
\leq 2\,\dpl\big(\fcos, P\big)\,.
$$
Substituting $n = m + k LP$ with $m = 1-\tfrac{LP}{2},\,\ldots,\,\tfrac{LP}{2}$
and $k \in \N$ in $\sigma_2$, we obtain
$$
\textstyle \sigma_2(v) = \frac{1}{P}\,\sum\limits_{m=1-LP/2}^{LP/2}\,\sum\limits_{k=1}^{\infty} f\big(\tfrac{m}{P} + k L\big)\, \cos\frac{2\pi\,(m+k LP)\,v}{P}
$$
and therefore
$$
\textstyle \max\limits_{v \in [0,\,P/2]} |\sigma_2(v)| \leq \frac{L P}{P}\, \max\limits_{m= 1-LP/2,\ldots,LP/2} \sum\limits_{k=1}^{\infty} \big|f\big(\tfrac{m}{P} + k L\big)\big| \leq L\,\dpl(f,L)\,.
$$
Thus (\ref{eq:cosmain}) follows. The estimate (\ref{eq:sinmain}) can be shown similarly.
\end{proof}

Inserting the decay estimates of Lemmas~\ref{Lemma:polydecay}
and~\ref{Lemma:expodecay} into \eqref{eq:cosmain} or \eqref{eq:sinmain}, we obtain explicit,
practicable bounds. All decay combinations are summarized in Table \ref{table1}.
In particular, if
$\mathrm{supp}\,\fcos \subseteq \big[0,\,\frac{K}{2}\big]$ with $K \leq P$, then
$\fcos(v + kP) = 0$ for all $k \in \N$ and $|v| \leq P/2$, so that
$\dpl\big(\fcos, P\big) = 0$.  Similarly,  $\mathrm{supp}\,f \subseteq
\bigl[0,\,\frac{K}{2}\bigr]$ with $K \leq L$,
implies $\dpl(f, L) =0$.

\begin{table}[htbp]
\centerline{\begin{tabular}{p{6cm} | p{6.5cm}}
	\toprule
	decay of $\hat{f}_{\cos}$  resp. $\hat{f}_{\sin}$& bound for $\dpl(\fcos,P)$ resp. $\dpl(\fsin,P)$\\
	\midrule
	$\big|\hat{f}_{\cos,\sin}(v)\big|\,(1+v)^b \leq d$, \; $b>1$ & $d \,(2^b-1) \zeta(b)\,P^{-b}$\\
	$\big|\hat{f}_{\cos,\sin}(v)\big|\,\e^{s v^{\beta}} \leq d$, \qquad $s,\beta>0$ & $2 d\, \e^{-s\,(P/2)^{\beta}}$\\
	$\mathrm{supp}\,\hat{f}_{\cos,\sin} \subseteq \big[0,\tfrac{K}{2}\big]$, \; $0<K\leq P$ & $0$\\
	\midrule
	decay of $f$ & bound for $\dpl(f,L)$ \\
	\midrule
	$|f(x)|\,(1+x)^a \leq c$, \, $a>1$ & $c\,(2^a-1)\zeta(a)\,L^{-a}$\\
	$|f(x)|\,\e^{r x^{\alpha}} \leq c$, \qquad $r,\alpha>0$ & $2c\,\e^{-r\,(L/2)^{\alpha}}$\\
	$\mathrm{supp}\,f \subseteq \big[0,\tfrac{K}{2}\big]$, \; $0<K\leq L$ & $0$\\
	\bottomrule
	\end{tabular}}
	\caption{Upper bound for $\dpl(\hat{f}_{\cos},P)$ resp.\  $\dpl(\hat{f}_{\sin},P)$ and $\dpl(f,L)$ for the error estimates (\ref{eq:cosmain}),  (\ref{eq:sinmain}), which depend on decay properties of  $\fcos$ resp.\ $\fsin$ and $f$.
	Here $\hat{f}_{\cos,\sin}$ stands for $\fcos$ and $\fsin$, respectively. For the exponential
	decay estimates we assume, in accordance with \eqref{eq:deltaexpo1}, that
	$2\beta\big(s\big(\frac{P}{2}\big)^{\beta}+1-\frac1\beta\big)\ge 1$ and
	$2\alpha\big(r\big(\frac{L}{2}\big)^{\alpha}+1-\frac1\alpha\big)\ge 1$, respectively; if these
	conditions fail, the sharper bound \eqref{eq:deltaexpo} has to be used instead.
	}
	\label{table1}
	\end{table}
\begin{remark}
 Note that $\fsin(0) = s_{P,L}(0) = s_{P,L}\big(\tfrac{P}{2}\big) = 0$, whereas
$\fsin\big(\tfrac{P}{2}\big) \neq 0$ in general, so that
$\Esin_{P,L}(f) \ge \frac{1}{\sqrt L}\big|\fsin\big(\tfrac{P}{2}\big)\big|$.
No additional
hypothesis is needed to cover this endpoint: choosing $x = -\frac{P}{2}$ and $k=1$ in
\eqref{eq:decayrate+} with $L = P$ gives $\dpl(\fsin,P) \ge \big|\fsin\big(\tfrac{P}{2}\big)\big|$, so that the
term $\frac{2}{\sqrt L}\,\dpl(\fsin,P)$ in \eqref{eq:sinmain}
already dominates it. In fact
this endpoint carries the \emph{entire} sine quadrature error in the sharp example of
Remark~$\ref{rem-sinus}$, so that requiring $\fsin\big(\tfrac{P}{2}\big)=0$ would exclude
precisely those functions for which \eqref{eq:sinmain} is sharp.
\end{remark}
\begin{remark}
	\label{Rem:DCTI} \textbf{Fast computation of $c_{P,L}$ and $s_{P,L}$.}
	On the equispaced grid $\big\{\tfrac{j}{L}:\,j=0,\ldots,N\big\}$ with $N :=
	\tfrac{LP}{2}$, the values
	\begin{align*}
	\textstyle c_{P,L}\big(\tfrac{j}{L}\big) &=  \textstyle \frac{1}{2P}\,f(0) + \frac{1}{P}\,\sum\limits_{k=1}^{N} f\big(\tfrac{k}{P}\big) \cos \frac{\pi\,j k}{N}\,,\quad j =0,\,\ldots,\,N\,,\\
	\textstyle s_{P,L}\big(\tfrac{j}{L}\big) &=  \textstyle \frac{1}{P}\sum\limits_{k = 1}^{N-1} f\big(\tfrac{k}{P}\big)\,
	\sin \frac{\pi\,jk}{N}\,, \quad j = 1,\ldots, N-1\,,
		\end{align*}
	can, up to the endpoint weighting, be simultaneously obtained by fast and stable algorithms for the  $\mathrm{DCT}$-$\mathrm{I}(N+1)$
	and for the $\mathrm{DST}$-$\mathrm{I}(N-1)$, respectively with $\mathcal O(N \log N)$ operations,
	see \cite{PlTa05} and
	\cite[pp.~168--169, 367--381]{PlPoStTa23}. A shifted grid $\big\{\tfrac{j}{L}+h:\,j=0,\ldots,N\big\}$ with $h \in (0,\frac1L)$  can be
handled analogously, by splitting $\cos\big(\frac{2\pi jk}{LP}+\frac{2\pi hk}{P}\big)$ and
applying one DCT-I$(N+1)$ and one DST-I$(N-1)$.
\end{remark}

We also derive a lower estimate of $\Ecos_{P,L}$.
\begin{corollary}\label{corlower}
Let $L$, $P \in 2 \N$ be fixed and let $f \in \Wp$ with $\fcos \in \Wp$ be given.\\
If $f$ satisfies $\sign(\fcos(\frac{P}{2})) = \sign\big(\sum_{k=1}^\infty \fcos\big(\frac{P}{2} +kP\big)\big)$, then
\begin{equation}
\label{eq:cosbelow1}
 \textstyle    \frac{1}{\sqrt{L}} \big|\fcos(\frac{P}{2}) \big| - \sqrt{L} \,\dpl(f,L)
 \leq \Ecos_{P,L}(f).
\end{equation}
If $f$ is non-negative and monotonically decreasing then
\begin{equation}
\label{eq:cosbelow2}
 \textstyle   \frac{1}{\sqrt{L}}  \int\limits_{\frac{L}{2} + \frac{1}{P}}^{\infty} f(x) \, \d x-  \frac{2}{\sqrt L}\,\dpl\big(\fcos, P\big)
 \leq \Ecos_{P,L}(f).
\end{equation}
\end{corollary}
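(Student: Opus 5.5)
Both bounds come from the identity used in the proof of Theorem~\ref{Thm:samplingcospolynomial}: by \eqref{eq:Poisson2}, for all $v \in [0,\frac P2]$,
\[
\fcos(v) - c_{P,L}(v) = \sigma_2(v) - \sigma_1(v),
\]
with $\sigma_1,\sigma_2$ as in that proof. We already know $\max_v|\sigma_1(v)|\le 2\dpl(\fcos,P)$ and $\max_v|\sigma_2(v)|\le L\,\dpl(f,L)$. In each case we keep one remainder term as the main contribution, bound the other by the known estimate, and apply the reverse triangle inequality. This gives
\[
\sqrt L\,\Ecos_{P,L}(f)\ \ge\ \max_v|\sigma_j(v)| - \max_v|\sigma_{3-j}(v)|,
\]
and it remains to bound $\max_v|\sigma_j(v)|$ from below.

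\emph{First bound.} Take $j=1$ and evaluate at $v=\frac P2$. Then
\[
\sigma_1(\tfrac P2)=\sum_{k\ge1}\big[\fcos(\tfrac P2+kP)+\fcos(kP-\tfrac P2)\big].
\]
Reindex the second sum with $kP-\frac P2=\frac P2+(k-1)P$. Its $k=1$ term is $\fcos(\frac P2)$ and the rest reproduce the first sum, so
\[
\sigma_1(\tfrac P2)=\fcos(\tfrac P2)+2\sum_{k\ge1}\fcos(\tfrac P2+kP).
\]
The sign hypothesis says both summands have the same sign. Reading this in the real-valued setting, $|\sigma_1(\frac P2)|\ge|\fcos(\frac P2)|$. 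Combined with $|\sigma_2|\le L\dpl(f,L)$ and divided by $\sqrt L$, this gives \eqref{eq:cosbelow1}.

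\emph{Second bound.} Take $j=2$ and evaluate at $v=0$, where the cosines equal $1$:
\[
\sigma_2(0)=\frac1P\sum_{n>LP/2}f\big(\tfrac nP\big).
\]
Since $f\ge0$ is decreasing, $\frac1P f(\frac nP)\ge\int_{n/P}^{(n+1)/P}f(x)\d x$. Summing over $n\ge\frac{LP}{2}+1$ gives
\[
\sigma_2(0)\ \ge\ \int_{L/2+1/P}^{\infty}f(x)\d x.
\]
Subtracting $2\dpl(\fcos,P)$ and dividing by $\sqrt L$ yields \eqref{eq:cosbelow2}.

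The only delicate step is the reindexing of $\sigma_1(\frac P2)$ and the correct use of the sign hypothesis, which implicitly requires $\fcos$ to be real at those points. Nonnegativity of $f$ in the second part guarantees this there, and for the first part I would state the real-valued interpretation explicitly.
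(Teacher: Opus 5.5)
Your proposal is correct and follows the paper's proof essentially step for step. Both use the identity $\fcos - c_{P,L} = \sigma_2 - \sigma_1$ and the reverse triangle inequality, evaluated at $v=\frac P2$ with $|\sigma_1(\frac P2)| = |\fcos(\frac P2)| + 2\,\big|\sum_{k\ge1}\fcos(\frac P2+kP)\big|$ and $|\sigma_2| \le L\,\dpl(f,L)$ for \eqref{eq:cosbelow1}, and at $v=0$ with the rectangle-versus-integral comparison and $|\sigma_1| \le 2\,\dpl(\fcos,P)$ for \eqref{eq:cosbelow2}. Your explicit reindexing of $\sigma_1(\frac P2)$ just spells out a step the paper states without comment. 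The realness caveat is not actually needed: with the complex signum $z/|z|$, equal signs of $a$ and $b$ still give $|a+2b| = |a|+2|b|$.
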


\begin{proof}
With the notations in the proof of Theorem \ref{Thm:samplingcospolynomial} we have
\begin{align*} \textstyle \sqrt{L} \Ecos_{P,L}(f) &= \textstyle \max\limits_{v \in [0,\, P/2]} |\fcos(v) - c_{P,L}(v)| = \max\limits_{v \in [0,\, P/2]} |\sigma_2(v) - \sigma_1(v)| \\
&\ge  \textstyle \max\{ \big|\sigma_2\big(\frac{P}{2}\big)\big| - \big|\sigma_1\big(\frac{P}{2}\big)\big|, \,  \big|\sigma_1\big(\frac{P}{2}\big)\big|-\big|\sigma_2\big(\frac{P}{2}\big)\big| \}. \end{align*}
1. The assumption $\sign(\fcos(\frac{P}{2})) = \sign\big(\sum_{k=1}^\infty \fcos\big(\frac{P}{2} +kP\big)\big)$ implies
$$ \textstyle \big| \sigma_1\big(\frac{P}{2}\big) \big|
=  \big|\fcos\big(\frac{P}{2}\big)  \big| +   2\big|\sum\limits_{k =1}^{\infty} \fcos\big(\frac{P}{2} + k P\big)\big]  \big| \ge \big|\fcos\big(\frac{P}{2}\big)  \big|.
$$
Assertion (\ref{eq:cosbelow1})
follows now with $\sigma_2\big(\frac{P}{2}\big) \le L \delta_+(f,L)$.\\
2. If $f$ is non-negative and monotonically decreasing on $\Rp$, then each rectangle of area
$\frac1P f\big(\frac nP\big)$ dominates $\int_{n/P}^{(n+1)/P} f(x)\,\d x$, whence
$$ \textstyle \big|\sigma_2\big(0\big)\big| =
\frac{1}{P}\big|\sum\limits_{n = 1 + LP/2}^{\infty} f\big(\tfrac{n}{P}\big)\,\cos(0)\big|
\ge \int\limits_{\frac{1+LP/2}{P}}^{\infty} f(x) \, \d x
= \int\limits_{\frac{L}{2}+\frac{1}{P}}^{\infty} f(x)\,\d x. $$
Thus $|\fcos(0) - c_{P,L}(0)| \ge  |\sigma_2(0)| - |\sigma_1(0)|$
 with $\sigma_1(0) \le 2 \delta_+(\fcos,P)$ yields (\ref{eq:cosbelow2}).
\end{proof}

\subsection{Reconstruction of $f$ from finite equidistant function values}
\label{sec:recon}

Our results from Subsection \ref{sec:error} also yield an approximate reconstruction formula for
$f$ from the given values $f(\tfrac{n}{P})$, $n=0,\ldots,\frac{LP}{2}$. Throughout this subsection
we assume $L \le P$, since the samples $\fcos\big(\tfrac{n}{P}\big) - c_{P,L}\big(\tfrac{n}{P}\big)$
 are taken at arguments $\tfrac{n}{P} \in \big[0,\tfrac L2\big]$, whereas \eqref{eq:cosmain}
controls this difference only on $\big[0,\tfrac P2\big]$. Applying \eqref{eq:Poisson1} with $L$
replaced by $P$, we derive similarly as in \eqref{eq:cosmain} that
\begin{equation}
	\label{eq:maxerror1}
\textstyle 	\frac{1}{\sqrt L}\,\max\limits_{x\in [0,\,P/2]} \Big|f(x) - \frac{4}{P}\sumprime\limits_{n =0}^{LP/2}
	\fcos\big(\tfrac{n}{P}\big)\, \cos \frac{2\pi\,n x}{P}\Big|
	\leq \frac{2}{\sqrt L}\, \dpl(f,P) + 4\sqrt L \, \dpl(\fcos, L)\,,
\end{equation}
where the prime indicates that the term for $n=0$ is halved. The factor $\frac{4}{P}$ is dictated
by the right-hand side of \eqref{eq:Poisson1}. Replacing the samples $\fcos(\tfrac{n}{P})$ by
$c_{P,L}(\tfrac{n}{P})$, we set 
$$ \textstyle f_{P,L}(x) := \frac{4}{P}\sumprime_{n=0}^{LP/2} c_{P,L}\big(\tfrac{n}{P}\big)\cos\frac{2\pi nx}{P}. $$
For the reconstruction error we obtain from (\ref{eq:cosmain}) and (\ref{eq:maxerror1})
that
\begin{align}
\label{eq:reconchain}
\textstyle 	& \max\limits_{x\in [0, \frac{P}{2}]}  | f(x) - f_{P,L}(x)| \nonumber \\
&\leq  \textstyle
	\max\limits_{x\in [0,\frac{P}{2}]} \Big|f(x) \!-\! \tfrac{4}{P}\sumprime\limits_{n =0}^{LP/2} \fcos\big(\tfrac{n}{P}\big)
	 \cos \frac{2\pi\,n x}{P}  \Big|
	 \!+\! \max\limits_{x\in [0,\frac{P}{2}]}  \Big|\tfrac{4}{P}\sumprime\limits_{n =0}^{LP/2}
	\big( \fcos\big(\tfrac{n}{P}\big)\! -\!c_{P,L}\big(\tfrac{n}{P}\big)\big)
	\cos \frac{2\pi\,n x}{P}
	\Big| \nonumber\\
&\leq \textstyle  2\, \dpl(f,P) + 4 L \, \dpl(\fcos, L)  + \big(2L + \tfrac{4}{P}\big)
\big( 2 \dpl(\fcos,P) + L\, \dpl(f,L) \big)\,,
\end{align}
where we use that the second primed sum has  $\frac{LP}{2}+1$ terms. For the balanced
choice $L=P$ this simplifies to
$$
\textstyle \max\limits_{x\in [0, \frac{P}{2}]}  | f(x) - f_{P,L}(x)|
\le 8 \big(P + \tfrac1P\big)\,\dpl(\fcos,P) + \big(2P^2+6\big)\,\dpl(f,P)\,,
$$
and  sufficient decay properties of $f$ then lead by Table \ref{table1} to corresponding error estimates.

\subsection{Parametric quadrature error for rational and meromorphic functions}
\label{sec:sharpFT}

We apply Theorem \ref{Thm:samplingcospolynomial} to a class of rational functions and show how $L$
and $P$ should be chosen. Further, we will give an example of a meromorphic function for which the estimates
are almost tight and the error decays geometrically. We focus on $\fcos$, similar observations
hold for $\fsin$.

We start with considering a rational function of the form
\begin{align}\label{rational}
f(x) &= \textstyle  \sum\limits_{j=1}^J \sum\limits_{k=1}^{k_j} c_{k,j} (x^2-z_j^2)^{-k}, \quad x \in \Rp,
\end{align}
 with $c_{k,j} \in {\mathbb C}$ and pairwise distinct poles $z_1^2, \ldots, z_J^2$ of orders
$k_1, \ldots , k_J$,
where $z_1, \ldots , z_J$ are assumed to be in the open lower half plane.  We set
\begin{align}\label{s0}
	s_0 := \min\limits_{j=1,\ldots,J} |\Im ( z_j)| > 0\,, \qquad k_{\max} := \max\limits_{j=1,\ldots,J} k_j\,.
\end{align}
Then it can simply be observed that $f \in \Wp$, since $f(x) = {\mathcal O}(x^{-2})$ for $x \to \infty$.
For its Fourier cosine transform we observe

\begin{theorem}\label{neu}
For $f$ of the form $(\ref{rational})$ its Fourier cosine transform is given by
$$  \textstyle \fcos(v) = \sum\limits_{j=1}^J \sum\limits_{k=1}^{k_j} c_{k,j}
\Big(\frac{\pi v}{\i z_j} \Big)^{k - \frac{1}{2}} \frac{\sqrt{\pi}}{(k-1)!}   \,
K_{k- \frac{1}{2}}(2\pi \i v z_j)  ,
\qquad v \in \Rp,
$$
where $K_n$ with $n >0$ denotes the modified Bessel function of second kind.
In particular,
$$ \textstyle |\fcos(v)|  \le d_0\, (1+v)^{k_{\max}-1} \, \e^{- 2\pi s_0 v}, $$
for $v \in \Rp$, where the constant $d_0>0$ depends on $c_{k,j}$, $z_j$, and $k_j$.
\end{theorem}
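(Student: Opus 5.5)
By linearity, it suffices to treat a single term $f_{k,z}(x) := (x^2 - z^2)^{-k}$ with $\Im z < 0$ and $k \in \N$. The plan is to first compute $\fcos$ for such a term in closed form via the classical Basset integral, and then read off the decay bound from the asymptotics of $K_{k-\frac12}$.

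Write $z = -\i w$ with $\Re w = -\Im z > 0$. Then $x^2 - z^2 = x^2 + w^2$, and for $w > 0$ Basset's formula (DLMF 10.32.11) gives
$$
\textstyle \int_0^\infty \frac{\cos(2\pi x v)}{(x^2+w^2)^{k}}\d x
= \frac{\sqrt{\pi}}{(k-1)!}\Big(\frac{2\pi v}{2w}\Big)^{k-\frac12} K_{k-\frac12}(2\pi v w),
$$
using $\Gamma(k) = (k-1)!$. Both sides are analytic in $w$ on the half plane $\Re w > 0$. The left side is analytic there because the integrand is analytic in $w$ and dominated by $C(1+x)^{-2}$ locally uniformly, since $x^2 + w^2$ stays away from $0$ when $\Re w > 0$. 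The right side uses the principal branches of $K_\nu$ and of the power. The identity therefore extends by analytic continuation from $w > 0$ to $\Re w > 0$.

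Substituting $w = \i z$ turns $\frac{\pi v}{w}$ into $\frac{\pi v}{\i z}$ and $K_{k-\frac12}(2\pi v w)$ into $K_{k-\frac12}(2\pi\i v z)$, which gives exactly the stated formula. The branch of $(\frac{\pi v}{\i z})^{k-\frac12}$ must be the principal one; this is consistent because $\Re(\i z) > 0$ places the base in the right half plane. Summing over $j$ and $k$ with the coefficients $c_{k,j}$ yields the full formula. The value $v = 0$ is understood as a limit; it is consistent because $v^{\nu} K_\nu(cv)$ has a finite limit as $v \to 0$.

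For the decay bound, use that $K_{k-\frac12}$ has half-integer order and so is elementary:
$$
\textstyle K_{k-\frac12}(\zeta) = \sqrt{\frac{\pi}{2\zeta}}\, \e^{-\zeta} \sum_{m=0}^{k-1} \frac{(k-1+m)!}{m!\,(k-1-m)!}(2\zeta)^{-m}.
$$
Combining this with the prefactor $\zeta^{k-\frac12}$, where $\zeta = 2\pi v w$, gives a polynomial of degree $k-1$ in $v$ times $\e^{-2\pi v w}$. In particular there is no singularity at $v = 0$. Since $|\e^{-2\pi v w}| = \e^{-2\pi v \Re w} = \e^{-2\pi v |\Im z_j|} \le \e^{-2\pi s_0 v}$, each term is bounded by $C_{k,j}(1+v)^{k-1}\e^{-2\pi s_0 v}$. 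Summing over all terms and using $k \le k_{\max}$ gives the bound with a suitable constant $d_0$.

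I expect the main obstacle to be the justification of the analytic continuation with consistent branches. The safest route is the explicit polynomial-times-exponential representation above: both sides are then entire or analytic in $w$ in an elementary way, and the identity can be checked directly. An alternative that avoids Bessel functions altogether is a residue computation: close the contour of $\frac12\int_\R (x^2+w^2)^{-k}\e^{2\pi\i x v}\d x$ in the upper half plane around the pole $x = \i w$ of order $k$. This yields the same polynomial times $\e^{-2\pi v w}$.
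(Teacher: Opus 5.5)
Your proof is correct. For the closed form you follow the paper. It quotes the same Basset-type integral (Gradshteyn--Ryzhik 3.773(8)) directly with complex $\beta=\i z_j$, $\Re\beta>0$. Your analytic continuation from real $w>0$ is therefore harmless but not needed, since that formula (like DLMF 10.32.11) already holds for $|\arg w|<\frac{\pi}{2}$.

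The real difference is in the decay bound. The paper multiplies each term by $\e^{2\pi s_0 v}(1+v)^{1-k}$. It then uses only the leading-order asymptotics of $K_{k-\frac12}$ at $0$ and at $\infty$ to conclude that the resulting $\varphi_{k,j}$ is continuous and bounded on $\Rp$, and it defines $d_0$ as a supremum. You instead use the exact finite expansion of the half-integer-order Bessel function, so each term becomes a polynomial of degree $k-1$ in $v$ times $\e^{-2\pi\i v z_j}$. This buys you several things:
\begin{itemize}
\item the bound is immediate and $d_0$ can be written down explicitly from the coefficients;
\item no separate limit argument at $v=0$ is needed;
\item you only use $|\e^{-2\pi\i v z_j}|=\e^{-2\pi v|\Im z_j|}$.
\end{itemize}
This also makes one point transparent. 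The paper asserts that $\varphi_{k,j}$ has a finite limit as $v\to\infty$, but when $|\Im z_j|=s_0$ and $\Re z_j\neq 0$ only boundedness holds, because of the oscillating factor $\e^{-2\pi\i v\Re z_j}$. Boundedness is all that is needed, so the paper's conclusion stands.

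The paper's route is shorter and relies only on generic asymptotics of $K_\nu$, so it would also cover non-integer powers $(x^2-z_j^2)^{-\mu}$. Yours exploits the rational structure, and your residue alternative can dispense with Bessel functions altogether.

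One minor imprecision: the prefactor is $(\pi v/w)^{k-\frac12}$, not $\zeta^{k-\frac12}$; the two agree only up to a $v$-independent factor depending on $w$. This does not affect your conclusion.
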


\begin{proof} We employ the following integral formula  (see \cite[Formula 3.773(8)]{GR82})
\begin{align}\label{eq:ratcosK} \textstyle \int\limits_{0}^{\infty} \frac{\cos(ax)}{(x^2+\beta^2)^{n+1/2}} \d x  = \frac{\sqrt{\pi}}{2^n\beta^n
\Gamma(n+\frac{1}{2})}  a^n \, K_n(a \beta) \quad \text{for} \; a>0, \; \Re(\beta)>0, \, n > -\frac{1}{2},
\end{align}
with the modified Bessel function of second kind $K_n$. Here we set $a=2\pi v$, $\beta= \i z_j$ such that $\beta^2 = -z_j^2$,
and $n+ \frac{1}{2} = k$. Then
\begin{align*}
\textstyle \int\limits_0^{\infty} (x^2-z_j^2)^{-k} \, \cos(2\pi xv) \d x &=
\textstyle \frac{\sqrt{\pi}}{2^{k - \frac{1}{2}} (\i z_j)^{k - \frac{1}{2}} \Gamma(k)}  (2\pi v)^{k - \frac{1}{2}} \,
K_{k- \frac{1}{2}}(2\pi \i v z_j)\\
&= \textstyle  \Big(\frac{\pi v}{\i z_j} \Big)^{k - \frac{1}{2}} \frac{\sqrt{\pi}}{(k-1)!}   \,
K_{k- \frac{1}{2}}(2\pi \i v z_j).
\end{align*}
This formula is a generalization of \cite[p.~8, Formula 2.7]{Ob90}.
For shortness, we set
$$
\textstyle \varphi_{k,j}(v) := \Big(\frac{\pi v}{\i z_j}\Big)^{k-\frac12}\,\frac{\sqrt\pi}{(k-1)!}\,
K_{k-\frac12}(2\pi\i v z_j)\,\e^{2\pi s_0 v}\,(1 + v)^{1-k}\,,\quad v > 0\,.
$$
Using the asymptotic expansions (see \cite[9.6.9 and 9.7.2]{AS72})
$$ \textstyle
K_{k-\frac12}(w) = \left\{\begin{array}{ll}
\frac{1}{2}\,\Gamma\big(k-\frac12\big)\big(\frac{2}{w}\big)^{k-\frac12}\big(1 + {\mathcal O}(|w|^2)\big)\,, & w \to 0\,,\\[1mm]
\sqrt{\frac{\pi}{2w}}\, \e^{-w}\big(1 + {\mathcal O}(|w|^{-1})\big)\,, & w \to \infty\,,
\end{array}\right.
$$
with $w = 2\pi\i v z_j$, we see that finite limits of $\varphi_{k,j}(v)$ for $v \to +0$ and
$v \to \infty$ exist. Hence each function $\varphi_{k,j}$ is continuous and bounded on $\Rp$.
Estimating
\begin{eqnarray*}
\textstyle
|\fcos(v)|\,\e^{2\pi s_0 v}\,(1+v)^{1-k_{\max}} &\leq& \sum\limits_{j=1}^J \sum\limits_{k=1}^{k_j} |c_{k,j}| \,|\varphi_{k,j}(v)|\, (1 + v)^{k- k_{\max}}\\
&\leq& d_0 := \sup\limits_{v\in \Rp}\, \sum\limits_{j=1}^J \sum\limits_{k=1}^{k_j} |c_{k,j}| \,|\varphi_{k,j}(v)|\,,
\end{eqnarray*}
we obtain the assertion of Theorem \ref{neu}, since $(1+v)^{k - k_{\max}} \leq 1$ for all $v \in \Rp$.
\end{proof}

\begin{remark}\label{remlast}
The  function $f(x) = \sum_{j=1}^J c_{j} (x^2-z_j^2)^{-1}$ with $\Im(z_j) <0$, $j=1, \ldots , J$,  and $s_0$ in $(\ref{s0})$
satisfies the decay property $\sup_{x \in \Rp} |f(x)| (1+x)^2 \le c$ with the constant
$$
\textstyle c := \sum\limits_{j=1}^J |c_j| \, \sup\limits_{x \in \Rp}\frac{(1+x)^2}{|x^2-z_j^2|}\,.
$$
If all poles are purely imaginary, $z_j = -\i s_j$ with $s_j>0$, then
$\sup_{x\in\Rp}\frac{(1+x)^2}{x^2+s_j^2} = 1 + s_j^{-2}$, attained at $x = s_j^2$, so that
$c \le \big(1+ \frac{1}{s_0^2}\big)\sum_{j=1}^J |c_j|$.
Using
$K_{\frac{1}{2}}(x) = \sqrt{\frac{\pi}{2x}} \e^{-x}$, we obtain
$$  \textstyle \fcos(v) = \sum\limits_{j=1}^J c_{j}
 \Big(\frac{\pi v}{\i z_j} \Big)^{1/2} \sqrt{\pi} \,
K_{\frac{1}{2}}(2\pi \i v z_j) =  - \frac{\i\,\pi}{2} \sum\limits_{j=1}^J
\frac{c_{j}}{z_j}  \,
 \e^{-2\pi \i v z_j}.
$$
In particular,
$$ \textstyle  |\fcos(v)| \le \frac{\pi}{2} \sum\limits_{j=1}^J
\frac{|c_{j}|}{|z_j|}  \,
 \e^{-2\pi  v |\Im(z_j)|}  \le d_0\,  \e^{-2\pi  v s_0} \quad \text{with} \quad d_0 \le \frac{\pi}{2s_0}
 \sum\limits_{j=1}^J |c_{j}|,
$$
where we used $|z_j| \ge |\Im (z_j)| \ge s_0$.
Thus $\fcos(v)$ possesses an exponential decay for $v \in \Rp$ as in
Table $\ref{table1}$ with $d= d_0$, $\beta=1$, and  $s=2\pi s_0$.
\end{remark}

\begin{corollary}
	\label{Cor:ratFT}
	Let $f(x) =  \sum\limits_{j=1}^J c_{j} (x^2-z_j^2)^{-1}$ as in Remark \ref{remlast}.
	Then, for all $L$, $P \in 2\N$,
	\begin{equation}
		\label{eq:raterror}
		\Ecos_{P,L}(f) \;\leq\; 4d_0\,L^{-1/2}\,\e^{-\pi s_0 P}
		\;+\; \tfrac{\pi^2}{2}\,c\,L^{-3/2}\,
	\end{equation}
	with $c$ and $d_0$ as in Remark $\ref{remlast}$ and $s_0$ in (\ref{s0}).
	Moreover, if $c_j \in \Rp$, $\Re(z_j) =0$ and $2\big(|z_j|+\frac1P\big) \le L$ for all $j$, then
\begin{align}\label{eq:errorlow}
\textstyle \Ecos_{P,L}(f) \ge  c_0\, L^{-3/2} -  4d_0\,L^{-1/2}\,\e^{-\pi s_0 P}
\end{align}
with $c_0 = \sum\limits_{j=1}^J c_{j}  $, showing that upper and lower bound exhibit the same
decay $L^{-3/2}$ in $L$ and the same geometric decay in $P$. The plateau reached for large $P$ is
therefore of the correct order $L^{-3/2}$, and only the constants differ.
\end{corollary}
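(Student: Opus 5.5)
The plan is to specialise Theorem~\ref{Thm:samplingcospolynomial} for the upper bound and Corollary~\ref{corlower}, estimate \eqref{eq:cosbelow2}, for the lower bound. In both cases I only need the two decay rates $\dpl(\fcos,P)$ and $\dpl(f,L)$, which I read off from Remark~\ref{remlast}.

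\emph{Upper bound.} By Remark~\ref{remlast}, $f$ has polynomial decay with exponent $a=2$ and constant $c$. Lemma~\ref{Lemma:polydecay} then gives
\[
\dpl(f,L)\le c\,(2^2-1)\zeta(2)\,L^{-2}=\tfrac{\pi^2}{2}\,c\,L^{-2},
\]
so $\sqrt L\,\dpl(f,L)\le \tfrac{\pi^2}{2}c\,L^{-3/2}$, which is the second term of \eqref{eq:raterror}.

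For the frequency side, Remark~\ref{remlast} gives $|\fcos(v)|\le d_0\e^{-2\pi s_0 v}$, i.e.\ exponential decay with $\beta=1$ and $s=2\pi s_0$. Table~\ref{table1} then yields $\dpl(\fcos,P)\le 2d_0\e^{-\pi s_0P}$, so $\frac{2}{\sqrt L}\dpl(\fcos,P)\le 4d_0L^{-1/2}\e^{-\pi s_0P}$, the first term.

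The main obstacle is that this table entry is only valid under the side condition of \eqref{eq:deltaexpo1}, which for $\beta=1$ reads $2\pi s_0 P\ge 1$. The bound is claimed for all $P$. To handle this I will sum the geometric series directly: for $|v|\le P/2$ the worst case is $v=-P/2$, and
\[
\sum_{k\ge1}\e^{-2\pi s_0(kP-P/2)}=\frac{\e^{-\pi s_0P}}{1-\e^{-2\pi s_0P}}.
\]
This is at most $2\e^{-\pi s_0P}$ exactly when $\e^{-2\pi s_0P}\le\tfrac12$. If that fails, I would fall back on the exact pole-wise expression $\fcos(v)=-\frac{\i\pi}{2}\sum_j\frac{c_j}{z_j}\e^{-2\pi\i vz_j}$ and the slack in the estimate $d_0\le\frac{\pi}{2s_0}\sum_j|c_j|$, or else accept the mild restriction on $P$ stated in Table~\ref{table1}.

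\emph{Lower bound.} Under the extra hypotheses $c_j>0$ and $z_j=-\i s_j$, we have
\[
f(x)=\sum_j \frac{c_j}{x^2+s_j^2},
\]
which is non-negative and decreasing on $\Rp$. So \eqref{eq:cosbelow2} applies, and combined with the bound $\dpl(\fcos,P)\le 2d_0\e^{-\pi s_0P}$ from above it accounts for the subtracted term $4d_0L^{-1/2}\e^{-\pi s_0P}$.

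It remains to bound the integral from below. Set $A=\frac L2+\frac1P$. Then
\[
\int_A^\infty\frac{\d x}{x^2+s_j^2}=\frac{1}{s_j}\arctan\frac{s_j}{A}.
\]
I use the elementary inequality $\arctan t\ge \frac{t}{1+t}$ for $t\ge0$; it holds because both sides vanish at $0$ and $\frac{1}{1+t^2}\ge\frac{1}{(1+t)^2}$. This gives the lower bound $\frac{1}{A+s_j}$. The hypothesis $2(|z_j|+\frac1P)\le L$ gives $A+s_j\le L$, so the $j$-th term is at least $c_j/L$. Summing over $j$ and dividing by $\sqrt L$ gives $c_0L^{-3/2}$, which proves \eqref{eq:errorlow}.
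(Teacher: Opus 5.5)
Your argument follows the paper's own proof. The upper bound \eqref{eq:raterror} comes from Theorem~\ref{Thm:samplingcospolynomial}, Lemma~\ref{Lemma:polydecay} with $a=2$, and the exponential row of Table~\ref{table1}. The lower bound \eqref{eq:errorlow} comes from \eqref{eq:cosbelow2} together with $\arctan t\ge\frac{t}{1+t}$, which is the paper's $\arccot x\ge\frac{1}{1+x}$.

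\textbf{Upper bound.} The obstacle you flag is genuine. The paper's one-line proof passes over it by citing Table~\ref{table1} without its side condition. Your first fallback (the exact pole-wise formula and slack in $d_0$) cannot close it, because \eqref{eq:raterror} is false when $2\pi s_0P$ is small. Take $J=1$, $c_1=1$, $z_1=-\i s$, i.e.\ $f(x)=(x^2+s^2)^{-1}$:
\begin{itemize}
\item Then $\fcos(v)=\frac{\pi}{2s}\e^{-2\pi sv}$ holds with equality, so $d_0=\frac{\pi}{2s}$ has no slack at all, and $c=1+s^{-2}$.
\item Since $f>0$, we have $c_{P,L}(0)\ge\frac{f(0)}{2P}=\frac{1}{2Ps^2}$.
\item For $s=10^{-2}$, $P=2$, $L=1000$ this gives $\sqrt L\,\Ecos_{P,L}(f)\ge c_{P,L}(0)-\fcos(0)\ge 2500-157>2300$.
\item By contrast, $\sqrt L$ times the right-hand side of \eqref{eq:raterror} is $\frac{2\pi}{s}\e^{-2\pi s}+\frac{\pi^2(1+s^2)}{2s^2L}\approx 590+49<640$.
\end{itemize}
The true value $\dpl(\fcos,P)=d_0\frac{\e^{-\pi sP}}{1-\e^{-2\pi sP}}\approx 1250$ lies far above $2d_0\e^{-\pi sP}\approx 295$: a peak of width $s$ is undersampled with step $\frac1P$.

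So only your second option is correct. \eqref{eq:raterror} holds whenever $\e^{-2\pi s_0P}\le\frac12$, which is exactly what your direct geometric sum gives (slightly better than the Table's $2\pi s_0P\ge1$). For arbitrary $P$ it holds only with the first term replaced by $\frac{2d_0}{\sqrt L}\,\frac{\e^{-\pi s_0P}}{1-\e^{-2\pi s_0P}}$.

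\textbf{Lower bound.} You reuse $\dpl(\fcos,P)\le 2d_0\e^{-\pi s_0P}$ here, so this step inherits the same condition. In this case no restriction is actually needed, but you should add the reason:
\begin{itemize}
\item With $z_j=-\i s_j$ and $s_{\max}:=\max_j s_j$, we have $d_0\ge\fcos(0)=\frac{\pi}{2}\sum_j\frac{c_j}{s_j}\ge\frac{\pi c_0}{2s_{\max}}$.
\item The hypothesis $2\bigl(|z_j|+\frac1P\bigr)\le L$ gives $L>2s_{\max}$.
\item Hence the right-hand side of \eqref{eq:errorlow} is positive only if $\e^{\pi s_0P}>\frac{4d_0L}{c_0}\ge\frac{2\pi L}{s_{\max}}>4\pi$.
\item In that case $\e^{-2\pi s_0P}<(4\pi)^{-2}<\frac12$, and your geometric-sum bound applies. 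Otherwise \eqref{eq:errorlow} holds trivially.
\end{itemize}
The rest of your lower-bound argument is correct and matches the paper: positivity and monotonicity of $f$, the $\arctan$ inequality, and $A+s_j\le L$.
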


\begin{proof} The proof of (\ref{eq:raterror}) follows directly by application of Theorem \ref{Thm:samplingcospolynomial} together with Table \ref{table1}. To show (\ref{eq:errorlow}) we apply the inequality (\ref{eq:cosbelow2}) in Corollary \ref{corlower}.  By assumption $f$ in $(\ref{rational})$  is real-valued and monotonically decreasing, and with $A := \frac{L}{2}+\frac{1}{P}$ we find
\begin{align*} \textstyle \int\limits_{A}^{\infty} f(x) \, \d x
&=   \textstyle \sum\limits_{j=1}^J c_{j}  \int\limits_{A}^{\infty} (x^2-z_j^2)^{-1} \, \d x
=  \sum\limits_{j=1}^J \frac{c_{j}}{|z_j|} \, \arccot\big(\tfrac{A}{|z_j|} \big)
\ge  \sum\limits_{j=1}^J \frac{c_{j}}{|z_j|} \, \frac{1}{ 1+\frac{A}{|z_j|}} \\
&= \textstyle \sum\limits_{j=1}^J \frac{c_j}{A + |z_j|}
= \frac{2}{L}\sum\limits_{j=1}^J \frac{c_j}{1 + \frac{2}{L}\big(\frac1P+|z_j|\big)}
\;\ge\; \frac{1}{L} \sum\limits_{j=1}^J c_j\,,
\end{align*}
where we have used that $\arccot(x) \ge \frac{1}{x+1}$ for $x \ge 0$ and
$\frac{2}{L}\big(\frac1P+|z_j|\big) \le 1$.
\end{proof}

\medskip

The first term in  \eqref{eq:raterror} decays geometrically with rate
$\e^{-\pi s_0}$
whereas the second term  decays only algebraically, since a rational function has
merely polynomial decay on $\Rp$. With this knowledge it is meaningful to choose $L$ essentially larger than $P$ in such a setting. See Example \ref{Ex:ratnum} and Figure \ref{fig:ratFT} for numerical illustration.
\medskip

Next we show that for  meromorphic functions, both error terms, $\delta_+(\fcos,P)$ and $\delta_+(f,L)$ can decay geometrically and moreover the error estimates are tight.
 We refer to  \cite{St76,St81,St93} for further studies of analytic and meromorphic
functions with geometric decay properties in time and frequency.

\begin{theorem}
	\label{Ex:sech} Let $P \in 2 \N$  and $P \ge 8$. Then we find for $f(x) := \operatorname{sech}(\pi x)$ and $L=P$ the estimate
	\begin{equation}
		\label{eq:sechlower}
		\textstyle
		 \frac{1}{4\sqrt P}\,\e^{-\pi P/2} \;\leq\;
		\Ecos_{P,P}(f) \;\leq\; \frac{1.64}{\sqrt{P}}\,\e^{-\pi P/2}  .
	\end{equation}

\end{theorem}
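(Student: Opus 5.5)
The plan is to use the self-duality of $\operatorname{sech}(\pi\,\cdot)$ together with the exact splitting $\fcos-c_{P,P}=\sigma_2-\sigma_1$ from the proof of Theorem~\ref{Thm:samplingcospolynomial}. I will \emph{not} invoke \eqref{eq:cosmain} directly. Its crude bound $\max|\sigma_2|\le L\,\dpl(f,L)$ costs a factor $\sqrt P$, which is incompatible with the claimed $P^{-1/2}$ rate. Since $\int_{\R}\operatorname{sech}(\pi x)\,\e^{-2\pi\i xv}\d x=\operatorname{sech}(\pi v)$ and $f$ is even, we have $\fcos(v)=\tfrac12\operatorname{sech}(\pi v)$. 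Throughout we use the elementary bounds
$$
\e^{-\pi y}\,(1-\e^{-2\pi y})\le \operatorname{sech}(\pi y)\le 2\,\e^{-\pi y},\qquad y\ge 0 .
$$

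\emph{Upper bound.} Fix $v\in[0,\tfrac P2]$.

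For $\sigma_1(v)=\sum_{k\ge1}[\fcos(v+kP)+\fcos(kP-v)]$, the dominant term is $\fcos(P-v)\le \e^{-\pi(P-v)}\le \e^{-\pi P/2}$. All remaining terms are bounded by $\e^{-\pi(kP-v)}$ with $kP-v\ge 3P/2$, or by $\e^{-\pi(v+kP)}$ with $v+kP\ge P$. Summing these geometric series gives $|\sigma_1(v)|\le \e^{-\pi P/2}(1+\varepsilon_1)$, where $\varepsilon_1=O(\e^{-\pi P/2})$ is negligible for $P\ge 8$.

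For $\sigma_2(v)=\tfrac1P\sum_{n>P^2/2}f(\tfrac nP)\cos\tfrac{2\pi nv}{P}$, bound the summands by absolute values and use $f(\tfrac nP)\le 2\e^{-\pi n/P}$:
$$
|\sigma_2(v)|\le \tfrac{2}{P}\,\frac{\e^{-\pi P/2}\,\e^{-\pi/P}}{1-\e^{-\pi/P}}\le \tfrac{2}{\pi}\,\e^{-\pi P/2},
$$
since $P(\e^{\pi/P}-1)\ge\pi$.

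Adding the two estimates gives $\sqrt P\,\Ecos_{P,P}(f)\le (1+\tfrac2\pi+\varepsilon_1)\,\e^{-\pi P/2}$. Because $1+\tfrac2\pi\approx1.6366$, the extra $\varepsilon_1$ must be checked numerically for $P\ge8$ so that the total stays below $1.64$.

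\emph{Lower bound.} Evaluate the error at $v=\tfrac P2$. There all terms of $\sigma_1$ are positive, so
$$
\sigma_1(\tfrac P2)\ge \fcos(\tfrac P2)=\tfrac12\operatorname{sech}(\tfrac{\pi P}{2})\ge \e^{-\pi P/2}(1-\e^{-\pi P}).
$$
Also $\cos(\pi n)=(-1)^n$, so $\sigma_2(\tfrac P2)$ is an alternating series with decreasing terms $\tfrac1P f(\tfrac nP)$. Hence $|\sigma_2(\tfrac P2)|$ is at most its first term,
$$
\tfrac1P f\big(\tfrac P2+\tfrac1P\big)\le \tfrac2P\,\e^{-\pi P/2}.
$$
This yields $\sqrt P\,\Ecos_{P,P}(f)\ge \e^{-\pi P/2}(1-\e^{-\pi P}-\tfrac2P)\ge\tfrac14\e^{-\pi P/2}$ for $P\ge 8$.

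The main obstacle is the upper constant $1.64$. The crude bound $\operatorname{sech}\le2\e^{-\pi y}$ applied to $\sigma_1$ would give $2$ instead of $1$, so one must use $\fcos(y)=\tfrac12\operatorname{sech}(\pi y)\le\e^{-\pi y}$ carefully. One must also verify that the tail corrections and the factor $\e^{-\pi/P}/(P(1-\e^{-\pi/P}))\le 1/\pi$ really leave total slack below $0.0034$ for all $P\ge8$. If this is too tight, I would sharpen the bound on $\sigma_2$ by keeping the sign structure of the cosines rather than taking absolute values.
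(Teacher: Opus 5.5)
Your proof is correct and is essentially the paper's argument. It uses the same splitting $\fcos-c_{P,P}=\sigma_2-\sigma_1$, the alternating-series bound $|\sigma_2(\frac P2)|\le\frac2P\e^{-\pi P/2}$ at $v=\frac P2$ for the lower estimate, and the geometric-series bound $|\sigma_2(v)|\le\frac2\pi\e^{-\pi P/2}$. The only deviation is that you bound $\sigma_1(v)$ termwise, which gives $\varepsilon_1\le(\e^{-\pi P/2}+\e^{-\pi P})/(1-\e^{-\pi P})<4\cdot10^{-6}$, so your ``numerical check'' is immediate; the paper instead uses convexity to get $|\sigma_1(v)|\le\sigma_1(\frac P2)$. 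One small slip: the inequality $\frac12\operatorname{sech}\frac{\pi P}{2}\ge\e^{-\pi P/2}(1-\e^{-\pi P})$ is true, but it needs $\operatorname{sech}t\ge 2\e^{-t}(1-\e^{-2t})$, not your displayed bound without the factor $2$; the paper's simpler $\operatorname{sech}t\ge\e^{-t}$ already yields $\frac12-\frac2P\ge\frac14$.
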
	
\begin{proof}
	The function $f(x) := \operatorname{sech}(\pi x) = \frac{2}{e^{\pi x} + e^{-\pi x}}$ is meromorphic with the
	simple poles $\i\,(k+\tfrac12)$, $k \in \Z$, and similarly as in (\ref{s0}) we set $s_0:=  \min\limits_{k \in {\mathbb Z}} |\Im( \i\,(k+\tfrac12))| = \frac{1}{2}$. The  Fourier cosine transform is given by
	$$
	\fcos(v) =  \textstyle \int\limits_0^{\infty} \operatorname{sech}(\pi x)\,\cos(2\pi x v)\,\d x
	= \tfrac12\,\operatorname{sech}(\pi v)\,, \quad v \in \Rp\,.
	$$
As in the proofs of Theorem \ref{Thm:samplingcospolynomial} and Corollary \ref{corlower},
$\sqrt{P}\,\Ecos_{P,P}(f) = \max_{v \in [0, P/2]} |\sigma_2(v)-\sigma_1(v)|$, so that
\begin{align}\label{eq:ung}
 \textstyle
\big|\sigma_1\big(\frac{P}{2}\big)\big| - \big|\sigma_2\big(\frac{P}{2}\big)\big| \le \sqrt{P} \, \Ecos_{P,P}(f)
\le   \max\limits_{v \in [0,\, P/2]} \big(\big|\sigma_1\big(v)\big| + \big|\sigma_2\big(v)\big| \big).
\end{align}
We observe that  $ \operatorname{sech} (t)
 \in [ \e^{-t} , \,  2 \e^{-t}]$ is monotonically decreasing
 for $t \ge 0$, such that
$$
 \textstyle \big|\sigma_1\big(\tfrac P2\big)\big|= \fcos\big(\tfrac P2\big) + 2 \sum\limits_{k=1}^\infty \fcos(\frac{P}{2} + kP) \ge  \tfrac12\operatorname{sech}\tfrac{\pi P}{2} \;\geq\; \tfrac12\,\e^{-\pi P/2}\,,
$$
Since $\operatorname{sech}''(t) = \operatorname{sech}(t)\big(1-2\operatorname{sech}^2 t\big)>0$
for $t > \operatorname{arsech}\frac{1}{\sqrt2} = 0.8814$, the function $\operatorname{sech}$ is
convex on $[0.89,\infty)$. For $P \ge 8$, $k \in \N$ and $v \in [0,\frac P2]$ all arguments
$\pi(kP\pm v)$ lie in $[ 4 \pi,\infty)$, so that
$v \mapsto \fcos(v+kP)+\fcos(kP-v)$ is convex
 and increasing in $v$ on $[0,\frac P2]$, hence maximal at
$v=\frac P2$. Summing over $k$ gives $|\sigma_1(v)| \le \sigma_1\big(\frac P2\big)$ and
\begin{align*}
 \textstyle \big|\sigma_1(v)\big| &\le   \textstyle
 \tfrac{1}{2}\operatorname{sech}\tfrac{\pi P}{2} +  \sum\limits_{k=1}^\infty \operatorname{sech}\big(\pi(\tfrac P2 + kP)\big)
 \le  \e^{-P\pi/2} + 2 \sum\limits_{k=1}^\infty \e^{-\frac{P\pi}{2}(1+2k)}
 < \e^{-P\pi/2} \big(1 + 10^{-10}\big)\,.
 \end{align*}
On the other hand, $\sigma_2\big(\tfrac P2\big) = \frac1P\sum_{n>P^2/2}(-1)^n\operatorname{sech}\tfrac{\pi n}{P}$
is an alternating series with monotonically decreasing terms, whence
$\big|\sigma_2\big(\tfrac P2\big)\big| \le \tfrac1P\operatorname{sech}\tfrac{\pi P}{2} \le \tfrac2P\e^{-\pi P/2}$.
 To obtain an upper bound for $\big|\sigma_2\big(v\big)\big|$ for all $v \in [0,\frac P2]$, we use
$\operatorname{sech}(t) \le 2\e^{-t}$ and estimate for every $v \in [0,\frac P2]$,
$$
\textstyle |\sigma_2(v)| \le \frac2P \sum\limits_{n>P^2/2}\e^{-\pi n/P}
= \frac{2}{P}\,\frac{\e^{-\pi P/2}}{\e^{\pi/P}-1}
= \Big( \frac{2}{\pi}\,\frac{t}{\e^{t}-1}\Big|_{t = \pi/P} \Big) \, \e^{-\pi P/2}
\le \frac{2}{\pi}\,\e^{-\pi P/2}\,,
$$
since $t \mapsto \frac{t}{\e^t-1}$ is decreasing with limit $1$ at $t=0$.
Thus we obtain from (\ref{eq:ung}) the estimate
\begin{align*}
 \textstyle
\frac{1}{\sqrt{P}} \big( \frac{1}{2} \e^{-\pi P/2}  -  \frac{2}{P} \e^{-\pi P/2} \big) \le   \Ecos_{P,P}(f)
\le   \frac{1}{\sqrt{P}}  \big( \tfrac{2}{\pi} \e^{-\pi P/2} + \e^{-\pi P/2} (1 + 10^{-10}) \big),
\end{align*}
and the assertion follows for $P\ge 8$, since $\frac12 - \frac2P \ge \frac14$ and
$\frac2\pi + 1 + 10^{-10} < 1.64$.
\end{proof}

For comparison,  employing  the notations of Table \ref{table1} we observe that
	both $f(x) := \operatorname{sech}(\pi x)$ and $\fcos(v)= \frac{1}{2}\operatorname{sech}(\pi v)$ have exponential decay with $\alpha = \beta = 1$ at the
	critical rate $r = s = 2\pi s_0 = \pi$ with the constants
	$$
	c = \sup_{x\in\Rp} \operatorname{sech}(\pi x)\,\e^{\pi x} = 2\,, \qquad
	d = \sup_{v\in\Rp} \tfrac12\,\operatorname{sech}(\pi v)\,\e^{\pi v} = 1\,.
	$$
	Theorem~\ref{Thm:samplingcospolynomial} with Table \ref{table1} (second row twice)  then yields
	\begin{align}
		\label{eq:sechbound}
		\textstyle \Ecos_{P,L}(f) &\leq  \textstyle \frac{2}{\sqrt{L}} \big(2d \e^{-s \big(\frac{P}{2}\big)^{\beta}}\big)
		+ \sqrt{L} \big( 2c \e^{-r \big(\frac{L}{2}\big)^{\alpha}}\big)
		=   \textstyle \frac{4}{\sqrt{L}} \, \e^{-\frac{\pi P}{2}}
		+ 4\sqrt{L} \,  \e^{- \frac{\pi L}{2}}.
	\end{align}
	For the balanced choice $L = P$ both terms are of order $\e^{-\pi P/2}$, see Example \ref{Ex:sechnum} and
	Figure~\ref{fig:sharpFT} (left) for numerical illustration.

\begin{remark}\label{rem:balance}
The upper and the lower bound in Theorem $\ref{Ex:sech}$ differ by the constant factor
$1.64/0.25 = 6.6$, which is independent of $P$. Both bounds therefore exhibit the same decay
$P^{-1/2}\e^{-\pi P/2}$, so that the rate in \eqref{eq:sechlower} is sharp for $L=P$ and neither of
the two error parts $\sigma_1$, $\sigma_2$ dominates the other. The bound \eqref{eq:sechbound}
shows that for $L \ne P$ one of the terms $L^{-1/2}\e^{-\pi P/2}$, $L^{1/2}\e^{-\pi L/2}$ becomes
dominant, so that $L=P$ is the optimal balance.
\end{remark}

\begin{remark}\label{rem-sinus}
1. Similar results as in Theorem $\ref{neu}$ and Corollary $\ref{Cor:ratFT}$ can be also derived for the Fourier sine integral of functions of the form $x f(x)$ with $f(x)$ in $(\ref{rational})$.\\
2. Considering  the meromorphic function $f(x) = \tanh(\pi x)\,\operatorname{sech}(\pi x)$ with poles of order two at $\i\,(k+\tfrac12)$, $k \in \Z$ (thus again $s_0 = \tfrac12$),
 the Fourier sine integral  is given by
 $$ \textstyle \fsin(v) = v\,\operatorname{sech}(\pi v)$$
 for  $v \in \Rp$.
We find  $\sup_{x\in\Rp} |f(x)|\,\e^{\pi x} = 2$, so that $f$ satisfies the exponential decay
condition of Table  \textnormal{\ref{table1}} with $r = \pi$, $\alpha=1$, $c=2$.
For $\fsin$, however, the exponential row of Table  \textnormal{\ref{table1}} is \emph{not} applicable at the
critical rate $s=\pi$, $\beta=1$.  We have
$\sup_{v\in\Rp}|\fsin(v)|\,\e^{\pi v} = \sup_{v\in\Rp}\frac{2v}{1+\e^{-2\pi v}} = \infty$,
so that no finite constant $d$ exists. Instead we estimate $\dpl(\fsin,P)$ directly. For
$|v| \le \frac P2$ and $k \in \N$ we have $v + kP \ge (k-\frac12)P$ and $v+kP \le (k+\frac12)P$,
whence for $P \ge 8$
$$
\textstyle \dpl(\fsin,P) \le 2P \sum\limits_{k=1}^{\infty}\big(k+\tfrac12\big)\,\e^{-\pi(k-\frac12)P}
\le 4\,P\,\e^{-\pi P/2}\,.
$$
Thus $\fsin$ decays geometrically at the critical rate up to the additional algebraic factor $P$,
and $\Esin_{P,L}(f)$ still converges geometrically for $L = P$, see
Figure~$\ref{fig:sharpFT}\,(right)$. This extra factor $P$ is exactly the reason why the sine case
behaves differently from the cosine case in Example $\ref{Ex:sechnum}$.
\end{remark}

\section{Parametric quadrature to compute Chebyshev coefficients\label{sec:cheb}}

It is well-known that a smooth function $h$ can be well approximated on $I$ using an expansion into
Chebyshev polynomials,
\begin{equation}
	\label{eq:ChebSeries} 
\textstyle  \sumprime_{n=0}^{\infty} a_n[h]\,T_n(x) := \tfrac12 a_0[h] + \sum\limits_{n=1}^{\infty} a_n[h]\,T_n(x)\,,\quad x \in I\,,
\end{equation}
where the \emph{Chebyshev coefficients} $a_n[h]$ are defined in (\ref{eq:an1}).
Throughout this section,
we assume that $h \in C(I)$ and $\sum_{n=0}^{\infty} |a_n[h]| < \infty$, then the Chebyshev series (\ref{eq:ChebSeries})
converges absolutely and uniformly on $I$ to $h$,
see also \cite[Chapter~6]{PlPoStTa23}).
The decay condition  $\sum_{n=0}^{\infty} |a_n[h]| < \infty$ is always satisfied for $h \in C^1(I)$, see e.g.\ \cite[Theorem~6.11]{PlPoStTa23}.
If we want to apply this expansion (\ref{eq:ChebSeries}) numerically, we have to employ two approximation steps.

\paragraph{1. Efficient quadrature to compute the Chebyshev coefficients.}
Given the function values $h(x_j^{(P)})$ at the Chebyshev points
$x_j^{(P)} := \cos\tfrac{\pi j}{P}$, $j = 0,\ldots,P$, we employ for $a_n[h]$ in \eqref{eq:an1}
the quadrature formula
\begin{equation}
	\label{eq:DCTcoeffs}
	\textstyle a_n^{(P)}[h] := \tfrac{2}{P} \sumdprime_{\!\!\!j=0}^{\!\!\!P}
	\, h\big(x_j^{(P)}\big)\,T_n\big(x_j^{(P)}\big) = \tfrac{2}{P} \sumdprime_{\!\!\!j=0}^{\!\!\!P}
	\, h\big(x_j^{(P)}\big)\,\cos \tfrac{\pi j n}{P}\,,\quad n=0,1,\ldots,P\,,
\end{equation}
where the double prime indicates that the first and last terms are multiplied by $\tfrac12$. The
values $a_n^{(P)}[h]$, $n=0,\ldots,P$, are the output of a DCT-I$(P+1)$, see Remark
\ref{Rem:DCTI}, and are computed with ${\mathcal O}(P\log P)$ arithmetical operations (see
\cite{PlTa05} or \cite[Section~6.3]{PlPoStTa23}). If $h$ is even resp.\ odd, then the
coefficients  $a_n[h]$ and $a_n^{(P)}[h]$ with odd resp.\ even index  vanish.
A very useful generalization of \eqref{eq:DCTcoeffs} was given by H.~Wang and D.~Huybrechs \cite{Wang17} (see Subsection \ref{sec:aliasingcheb}).

\paragraph{2. Suitable truncation of the Chebyshev series.}
Having computed approximative coefficients $a_n^{(P)}[h]$, $n=0, \ldots, P$, we  fix a truncation
number $L \le P$ and compute the truncated Chebyshev series in the form
\begin{equation}
	\label{eq:samplingpoly}
	 \textstyle t_{P,L}(x) \!:= \tfrac12 a_0^{(P)}[h]
	+\! \sum\limits_{n=1}^{L-1} a_n^{(P)}[h]\,T_n(x)
	+\varepsilon_L^{(P)} a_L^{(P)}[h] T_L(x), \quad
	\varepsilon_n^{(P)} := \left\{\begin{array}{ll} \tfrac12 & n = P,\\ 1 & n<P.\end{array}\right.
\end{equation}
We call  $t_{P,L}$  the \emph{Chebyshev sampling polynomial} of degree $L$.
For $L=P$, the function $t_{P,P}$ is
precisely the interpolation polynomial of degree $P$ with
$t_{P,P}\big(x_j^{(P)}\big) = h\big(x_j^{(P)}\big)$, $j=0,\ldots,P$.
Substituting $x = \cos (2\pi y)$ with $y \in
\big[0,\,\tfrac12\big]$ gives $T_n(x) = \cos(2\pi n y)$, so that $t_{P,L}$
can be evaluated at an equispaced grid using a fast DCT-I$(L+1)$ and at an \emph{arbitrary nonequispaced grid} by an
NDCT$(L+1)$,
see also \cite[Algorithm~2.3]{PoStTa98}.
\medskip

We study the \emph{(parametric) quadrature error} and the \emph{reconstruction error}
\begin{align}
	\label{eq:Ech}
	\Ech_{P}(h) &:= \max_{n=0, \ldots ,P} \big|a_n[h] - \varepsilon_n^{(P)} a_n^{(P)}[h]\big|\,,\\
	\label{eq:EN}
	\Rch_{P,L}(h) &:= \|h - t_{P,L}\|_{C(I)} = \max_{x\in I} \bigl|h(x) - t_{P,L}(x)\bigr|\,,
\end{align}
and ask how they depend on the smoothness of $h$ and on the choice of $P$ and $L$.

\subsection{Aliasing formula and decay rates of Chebyshev coefficients}
\label{sec:aliasingcheb}

We first introduce counterparts of the Poisson summation formulas in Lemma
\ref{Lemma:FTPoissonsum} and of the decay rate \eqref{eq:decayrate+}. The  close connection between the
Chebyshev coefficients $a_n[h]$ and the discrete coefficients $a_n^{(P)}[h]$ is described by the
\emph{aliasing formula for Chebyshev coefficients}.

\begin{lemma}
	\label{Lemma:aliasingcheb}
	Let $P \in \N \setminus \{1\}$ be fixed. If $h \in C(I)$ satisfies
	$\sum_{n=0}^{\infty}|a_n[h]| < \infty$, then for every $n =0,\ldots,P$
	the aliasing formula
	\begin{equation}
		\textstyle a_n^{(P)}[h]
		= a_n[h] + \sum\limits_{k=1}^{\infty}\bigl(a_{n+2kP}[h]+a_{2kP-n}[h]\bigr)\,,
		\label{eq:aliasingcheb}
	\end{equation}
	is satisfied, where both series converge absolutely. In particular, for $n=0$ and $n=P$ we have
\begin{eqnarray*}
\textstyle a_0^{(P)}[h] &=& \textstyle a_0[h] + 2\,\sum\limits_{k=1}^{\infty} a_{2kP}[h]\,,\\
\textstyle a_P^{(P)}[h] &=& \textstyle 2\, \sum\limits_{k=1}^{\infty}a_{(2k-1)P}[h]\,.
\end{eqnarray*}
\end{lemma}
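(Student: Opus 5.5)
The plan is to insert the Chebyshev series of $h$ directly into the discrete sum \eqref{eq:DCTcoeffs}, using the discrete orthogonality of the cosines on the grid $\frac{\pi j}{P}$. Since $\sum_{m}|a_m[h]|<\infty$, the series \eqref{eq:ChebSeries} converges absolutely and uniformly to $h$ on $I$, and in particular pointwise at the Chebyshev points $x_j^{(P)}$. Hence
\[
\textstyle a_n^{(P)}[h] = \tfrac{2}{P}\sumdprime_{\!\!\!j=0}^{\!\!\!P}\sumprime_{m=0}^{\infty} a_m[h]\cos\tfrac{\pi j m}{P}\cos\tfrac{\pi j n}{P}.
\]
The inner series is dominated by $\sum_m|a_m[h]|$, and the outer sum is finite, so the two summations may be interchanged. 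This gives $a_n^{(P)}[h]=\sumprime_{m} a_m[h]\,\kappa_{m,n}$ with
\[
\textstyle \kappa_{m,n}:=\tfrac2P\sumdprime_{j=0}^{P}\cos\tfrac{\pi jm}{P}\cos\tfrac{\pi jn}{P}.
\]

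The key step is to compute $\kappa_{m,n}$ for $0\le n\le P$ and $m\in\Np$. By the product-to-sum formula, $\kappa_{m,n}=\frac1P\big(S(m+n)+S(m-n)\big)$ with $S(r):=\sumdprime_{j=0}^{P}\cos\frac{\pi jr}{P}$. I would evaluate $S$ by the standard trapezoidal identity: extending evenly, $S(r)=\frac12\sum_{j=0}^{2P-1}\cos\frac{\pi j r}{P}$, which equals $P$ if $r\in 2P\Z$ and $0$ otherwise. Therefore $\kappa_{m,n}=\#\{\pm:\ m\pm n\in 2P\Z\}$, that is, $\kappa_{m,n}$ counts how many of the conditions $m\equiv n$ and $m\equiv -n \pmod{2P}$ hold.

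Next comes the bookkeeping. For $1\le n\le P-1$ the residues $n$ and $-n$ modulo $2P$ are distinct and nonzero. Thus $\kappa_{m,n}=1$ exactly for $m=n+2kP$ with $k\ge0$ and for $m=2kP-n$ with $k\ge1$, and $\kappa_{m,n}=0$ otherwise. This yields \eqref{eq:aliasingcheb}; the prime on the $m$-sum plays no role here because $m=0$ never occurs.

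The boundary indices need separate treatment. For $n=0$, both conditions coincide, so $\kappa_{m,0}=2$ for $m\in 2P\Np$. Combined with the halved weight $\frac12$ for $m=0$, this gives $a_0[h]+2\sum_{k\ge1}a_{2kP}[h]$, which is \eqref{eq:aliasingcheb} at $n=0$ since $a_{0+2kP}=a_{2kP-0}$. For $n=P$, the residues $\pm P$ coincide, so $\kappa_{m,P}=2$ for $m\equiv P \pmod{2P}$, i.e.\ for $m=(2k-1)P$. This gives the stated formula for $a_P^{(P)}[h]$, again consistent with \eqref{eq:aliasingcheb}, since $n+2kP$ and $2(k+1)P-n$ then run over the same odd multiples of $P$.

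Absolute convergence of both series in \eqref{eq:aliasingcheb} follows from $\sum|a_m[h]|<\infty$, since each index $m$ appears at most twice. I expect no real obstacle. The only care needed is the endpoint weights: the double prime on the $j$-sum and the single prime on $a_0$ must combine correctly in the cases $n=0$ and $n=P$, and the closed form of $S(r)$ must be checked to handle the endpoint weighting precisely.
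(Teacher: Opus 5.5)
Your proposal is correct and follows essentially the same route as the paper. Both insert the absolutely and uniformly convergent Chebyshev series at the nodes $x_j^{(P)}$, apply the product-to-sum formula, and use the discrete orthogonality relation ($S(r)=P$ for $r\in 2P\Z$, $S(r)=0$ otherwise) to read off the aliased indices $n+2kP$ and $2kP-n$. Your write-up is more explicit than the paper's: you justify the interchange of summations, derive $S(r)$ via the even extension, and check the endpoint weights for $n=0$ and $n=P$.
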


\begin{proof} By assumption, the Chebyshev series \eqref{eq:ChebSeries}
converges absolutely and uniformly on $I$ to $h$ (see
\cite[Chapter~6]{PlPoStTa23}), such that on the $(P+1)$-point Chebyshev grid we have
$$
\textstyle h\big(x_j^{(P)}\big) = \tfrac12\,a_0[h] + \sum\limits_{k=1}^{\infty} a_k[h]\,\cos \tfrac{jk \pi}{P}\,, \quad j=0,\ldots, P\,.
$$
Inserting this into \eqref{eq:DCTcoeffs} and using $2\cos\varphi\cos\psi =
\cos(\varphi-\psi) + \cos(\varphi+\psi)$, we obtain
$$
\textstyle 	a_n^{(P)}[h] = \tfrac{1}{P}\, a_0[h]\, \sumdprime_{\!\!\!j=0}^{\!\!\!P} \cos \tfrac{\pi j n}{P} + \frac{1}{P}\sum\limits_{k=1}^{\infty} a_k[h]  \, \sumdprime_{\!\!\!j=0}^{\!\!\!P} \Big(\cos \tfrac{\pi j (k-n)}{P} + \cos \tfrac{\pi j (k+n)}{P} \Big)\,.
$$
 We apply the relation
$$
\textstyle \tfrac{1}{P} \sumdprime_{\!\!\!j=0}^{\!\!\!P}\cos \tfrac{\pi j m}{P} = \left\{ \begin{array}{ll} 1 &\quad m \in 2P\,\Z\,,\\
0 & \quad m \in \Z \setminus 2P\,\Z \,, \end{array}\right.
$$
 for $m = n$, $m = k-n$, and $m = k+n$, and obtain exactly the summation indices $k$ of the
form $k = n + 2mP$ or $k = 2mP - n$ with $m \in \N$,
 this yields
\eqref{eq:aliasingcheb}.
\end{proof}

Formula \eqref{eq:aliasingcheb} directly implies representations of the \emph{aliasing remainder}.
 We obtain
\begin{equation}
\label{eq:aliasrem}
r_n^{(P)}[h] := a_n^{(P)}[h] - a_n[h]  = \sum_{k=1}^{\infty}\bigl(a_{n+2kP}[h]+a_{2kP-n}[h]\bigr)\,, \qquad n =0, \ldots , P,
\end{equation}
and in particular $r_0^{(P)}[h] = 2\sum_{k=1}^{\infty }a_{2kP}[h]$, and $r_P^{(P)}[h] = a_P[h] + 2\sum_{k=1}^{\infty}a_{(2k+1)P}[h]$. Every $r_n^{(P)}[h]$ with
$n \leq P-1$ involves only Chebyshev coefficients $a_m[h]$ with indices $m>P$. This
motivates us to define the \emph{Chebyshev decay rate of} $h$ \emph{with respect to the
truncation parameter} $N \in \N$,
\begin{equation}
\label{eq:dch}
\textstyle \dch(h,N) := \sum\limits_{n=N+1}^{\infty}|a_n[h]|\,,
\end{equation}
the analogue of the one-sided decay rate \eqref{eq:decayrate+}.

We now derive the Chebyshev decay rate from smoothness properties of $h$, in the polynomial and
in the exponential case.

\begin{lemma}[{\cite[Theorem~6.16]{PlPoStTa23}}]
	\label{Lemma:Cpolydecay}
	Let $r\in\N_0$ and $h\in C^{r+1}(I)$. Then
	\begin{equation}
		\label{eq:polydecaycoeffs}
		\textstyle |a_n[h]| \leq \frac{2\,\|h^{(r+1)}\|_{C(I)}}{n\,(n-1)\,\cdots\,(n-r)}\,, \quad n>r\,.
	\end{equation}
	If in addition $r \ge 1$, then
	\begin{equation}
		\label{eq:polydecaytail}
		\textstyle \dch(h,N) \leq \frac{2\,\|h^{(r+1)}\|_{C(I)}}{r\,(N-r)^r}\,, \quad N>r\,.
	\end{equation}
\end{lemma}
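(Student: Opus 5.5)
The bound \eqref{eq:polydecaycoeffs} will come from repeated integration by parts in the trigonometric representation of $a_n[h]$, and \eqref{eq:polydecaytail} from summing it with a telescoping argument. Put $g(\theta):=h(\cos\theta)$, so that $a_n[h]=\frac{2}{\pi}\int_0^\pi g(\theta)\cos(n\theta)\d\theta$. One could integrate by parts in $\theta$, but derivatives of $g$ produce powers of $\sin\theta$, which are awkward to control. Instead I will use the classical identity
$$
\textstyle a_n[h] = \frac{1}{2n}\big(a_{n-1}[h'] - a_{n+1}[h']\big),\quad n\ge 1,
$$
which follows from $T_n'/n - T_{n-2}'/(n-2) = 2T_{n-1}$, or equivalently from one integration by parts in $\theta$ together with $\sin\theta\sin(n\theta)=\frac12(\cos((n-1)\theta)-\cos((n+1)\theta))$. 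Indeed, integrating by parts with $\frac{d}{d\theta}h(\cos\theta) = -\sin\theta\, h'(\cos\theta)$ gives $a_n[h]=\frac{2}{\pi n}\int_0^\pi h'(\cos\theta)\sin\theta\sin(n\theta)\d\theta$, and the product formula then yields the identity. The boundary terms vanish because $\sin(n\theta)=0$ at $\theta=0,\pi$.

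Next I iterate this identity $r+1$ times. Each application to $h^{(m)}$ expresses $a_k[h^{(m)}]$ as $\frac{1}{2k}$ times a difference of two coefficients of $h^{(m+1)}$ with indices $k\pm1$. After $r+1$ steps, $a_n[h]$ is a signed combination of $a_{n+j}[h^{(r+1)}]$ with $|j|\le r+1$. The coefficient of each such term is a product of factors $\frac{1}{2k}$ over the intermediate indices $k$, and all these indices are at least $n-r\ge 1$ because $n>r$. The trivial bound $|a_m[h^{(r+1)}]|\le \frac{2}{\pi}\cdot\pi\|h^{(r+1)}\|_{C(I)} = 2\|h^{(r+1)}\|_{C(I)}$ applies to every term. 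Counting the $2^{r+1}$ paths against the factor $2^{-(r+1)}$, and bounding the product of $\frac1k$ along each path by $\frac{1}{n(n-1)\cdots(n-r)}$, gives \eqref{eq:polydecaycoeffs}. The path bound holds because along each path the $j$-th factor has index at least $n-j$. Some care is needed when an intermediate index hits $0$, where $a_0$ carries a factor $2$ relative to the convention. I expect this not to occur since every intermediate index is $\ge n-r\ge1$. This bookkeeping is the main obstacle: keeping the constant exactly $2$ rather than something larger. If the path-counting is too lossy, I will instead do a direct induction on $r$ with the hypothesis $|a_k[h^{(m)}]|\le 2\|h^{(r+1)}\|/\big((k-?)\cdots\big)$ adapted to the indices.

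For \eqref{eq:polydecaytail} with $r\ge1$, I sum \eqref{eq:polydecaycoeffs} over $n\ge N+1$ and use the telescoping identity
$$
\textstyle \frac{1}{n(n-1)\cdots(n-r)} = \frac{1}{r}\Big(\frac{1}{(n-1)\cdots(n-r)}-\frac{1}{n\cdots(n-r+1)}\Big).
$$
The sum then collapses to $\frac{1}{r\,N(N-1)\cdots(N-r+1)}\le \frac{1}{r(N-r)^r}$, which gives $\dch(h,N)\le \frac{2\|h^{(r+1)}\|_{C(I)}}{r(N-r)^r}$.
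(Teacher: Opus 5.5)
Your proposal is correct. It differs from the paper in two respects.

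\textbf{The coefficient bound.} For \eqref{eq:polydecaycoeffs} the paper gives no argument and simply cites \cite[Theorem~6.16]{PlPoStTa23}. You instead give a self-contained proof via the recursion $a_n[h]=\frac{1}{2n}\big(a_{n-1}[h']-a_{n+1}[h']\big)$, obtained from one integration by parts in $\theta$. Your hedging there is unnecessary, because the path count loses nothing. There are $2^{r+1}$ paths, and each carries the weight $2^{-(r+1)}\prod_{j=0}^{r}k_j^{-1}\le 2^{-(r+1)}/\big(n(n-1)\cdots(n-r)\big)$, since $k_j\ge n-j$. Combined with $|a_m[h^{(r+1)}]|\le 2\|h^{(r+1)}\|_{C(I)}$, this gives exactly the constant $2$. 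Equivalently, you can run an induction on $r$ using $\frac{1}{(n+1)n\cdots(n-r+2)}\le\frac{1}{(n-1)\cdots(n-r)}$.

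The index-$0$ worry is also moot. In the paper's convention \eqref{eq:an1}, $a_0$ is defined by the same integral as every other $a_n$; the halving appears only in the series. So the recursion at $n=1$ and the trivial bound $|a_0[g]|\le 2\|g\|_{C(I)}$ hold verbatim. Moreover, the recursion is only ever applied at indices $\ge n-r\ge 1$. Index $0$ can occur only at the final level, where you use nothing but the trivial bound.

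\textbf{The tail bound.} For \eqref{eq:polydecaytail} the paper bounds $\frac{1}{n(n-1)\cdots(n-r)}\le (n-r)^{-(r+1)}$. It then compares the sum with $\int_N^\infty (t-r)^{-(r+1)}\d t=\frac{1}{r(N-r)^r}$. Your telescoping identity instead evaluates the tail sum exactly. This yields the slightly sharper bound $\dch(h,N)\le \frac{2\|h^{(r+1)}\|_{C(I)}}{r\,N(N-1)\cdots(N-r+1)}$. The stated bound follows because $N(N-1)\cdots(N-r+1)\ge (N-r+1)^r>(N-r)^r$.

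The paper's route is quicker to write down. Yours is exact, and it gives a visibly better constant when $N$ is not much larger than $r$.
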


\begin{proof} For the  bound of $|a_n[h]|$   in (\ref{eq:polydecaycoeffs}) we refer to  \cite[Theorem 6.16]{PlPoStTa23}. Let $r \ge 1$. Summing over
$n \geq N+1$ and using an integral comparison yields
\begin{align*}
\dch(h,N) & \leq \textstyle 2\,\|h^{(r+1)}\|_{C(I)}\sum\limits_{n=N+1}^\infty \frac{1}{(n-r)^{r+1}}
\leq  \textstyle 2\,\|h^{(r+1)}\|_{C(I)}\int\limits_N^\infty\frac{\d t}{(t-r)^{r+1}}
= \frac{2\,\|h^{(r+1)}\|_{C(I)}}{r\,(N-r)^r}\,.
\end{align*}
\vspace*{-5mm}

\end{proof}

For $h \in C^{\infty}(I)$, D.~Elliott \cite{El63} gave the estimate
$\big| a_n[h] \big| \leq \frac{1}{2^{n-1}\,n!}\, \max\limits_{x \in I} \big|
h^{(n)}(x)\big|$, $n \in \Np$. Sharper geometric rates are available for
real-analytic  functions $h$, which can be analytically continued to a neighborhood of $I$
in the complex plane. For this reason it is very useful to represent the related Chebyshev coefficients $a_n[h]$ as contour integrals (see \cite{El64, Wang17}).
Let $E_{\rho_0}$ with $\rho_0 > 1$ be the  \emph{Bernstein ellipse}
$$
 \textstyle E_{\rho_0} := \big\{z = \tfrac12\,\rho_0\,\e^{\i\,\theta} + \tfrac12\,\rho_0^{-1}\,\e^{-\i\,\theta}:\,\theta \in [0,\,2\pi)\big\}\,,
$$
with center $0$, foci $\pm 1$, and semi-axes $\tfrac12\,\big(\rho_0 \pm
\rho_0^{-1}\big)$. By $\mathcal{E}_{\rho_0}$ we denote the open \emph{Bernstein
region} bounded by $E_{\rho_0}$ being  a complex neighborhood of $I$.

From the recursion $T_{n+1}(z) = 2z\,T_n(z)-T_{n-1}(z)$, $T_0(z)=1$, $T_1(z)=z$, one obtains by
induction $T_n\big(\tfrac12 z + \tfrac12 z^{-1}\big) = \tfrac12 z^{n} + \tfrac12 z^{-n}$ for
$z \in \C\setminus\{0\}$ and $n \in \Np$.

Assume that $h$ can be extended analytically to
$\mathcal{E}_{\rho_0}$. Since the Joukowski map $z \mapsto \tfrac12(z+z^{-1})$ maps $|z|=\rho$ onto
$E_{\rho}$ and is invariant under $z \mapsto z^{-1}$, the function
$h\big(\tfrac12 z + \tfrac12 z^{-1}\big)$ is analytic precisely on the \emph{annulus}
$\rho_0^{-1} < |z| < \rho_0$
 and has there a Laurent
expansion
$$ \textstyle h\big(\tfrac12 z + \tfrac12 z^{-1}\big)= \sum\limits_{n\in\Z} c_n z^n$$ with coefficients
\begin{align*}
c_n &= \textstyle  \tfrac{1}{2\pi \i}\int\limits_{C_{\rho}} h\big(\tfrac12\,z + \tfrac12\,z^{-1}\big)\,z^{-n-1}\d z
= \tfrac{1}{2 \pi \rho^n}\int\limits_{0}^{2\pi} h\big(\tfrac12\,\rho\,{\mathrm e}^{\i \theta} + \tfrac12\rho^{-1}\,{\mathrm e}^{-\i \theta}\big){\mathrm e}^{-i n \theta}\,\d \theta, \quad n\in \mathbb Z,
\end{align*}
where $C_{\rho}$ denotes the circle $\{z \in \mathbb C:\,|z| = \rho\}$.
In particular, $c_n = c_{-n}.$ 
Comparing it with the Chebyshev expansion
yields the representation of $a_n[h]$ as a \emph{contour integral} (see \cite{Riv90, Wang17})
\begin{align}\label{contour}
\textstyle a_n[h] = 2 c_n = \frac{1}{\pi \rho^n} \int\limits_{0}^{2\pi} h\big( \tfrac12\,\rho \e^{\i \theta} + \tfrac12\,\rho^{-1} \e^{-\i \theta} \big)\, \e^{-\i n \theta}\, \d \theta\,,\quad n \in \Np\,,
\end{align}
for $1<\rho<\rho_0$, and hence the bound
$|a_n[h]| \leq 2\,\rho^{-n}\,\max_{z \in E_{\rho}} |h(z)|$, see e.g.\ \cite[Theorem~3.8]{Riv90} or
\cite[Theorem~8.1]{Tref13}. With $\sigma := \log \rho > 0$ and
$c := 2\,\max \{ |h(z)|:\,z \in E_{\rho}\}$  we obtain
\begin{equation}
\label{eq:expocoeffs}
\textstyle |a_n[h]| \leq c\,\e^{-\sigma n}\,, \quad n \in \Np\,.
\end{equation}

\begin{lemma}
	\label{lem:Cexpodecay}
Assume that the Chebyshev coefficients of $h \in C(I)$ satisfy the condition
$(\ref{eq:expocoeffs})$with some $c>0$ and $\sigma>0$.
Then the Chebyshev decay rate satisfies
	$$
	\textstyle \dch(h,N) \leq \frac{c}{\e^{\sigma}-1}\,\e^{-\sigma N}\,, \quad N \in \N\,.
	$$
\end{lemma}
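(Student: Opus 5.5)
The plan is a direct geometric-series computation. By the definition \eqref{eq:dch} of the Chebyshev decay rate and the assumed coefficient bound \eqref{eq:expocoeffs}, each term $|a_n[h]|$ with $n \ge N+1$ is bounded by $c\,\e^{-\sigma n}$. Since $c\,\e^{-\sigma n} \ge 0$, summing these termwise bounds gives
\begin{align*}
\textstyle \dch(h,N) = \sum\limits_{n=N+1}^{\infty} |a_n[h]| \le c \sum\limits_{n=N+1}^{\infty} \e^{-\sigma n}.
\end{align*}
The series on the right converges because $\sigma > 0$ gives $\e^{-\sigma} < 1$. In particular, the tail sum in $\dch(h,N)$ is finite, so $\dch(h,N)$ is well defined.

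Next I evaluate the geometric series explicitly. Substituting $n = N+1+m$ with $m \in \Np$ gives
\begin{align*}
\textstyle \sum\limits_{n=N+1}^{\infty} \e^{-\sigma n} = \e^{-\sigma (N+1)} \sum\limits_{m=0}^{\infty} \e^{-\sigma m} = \frac{\e^{-\sigma(N+1)}}{1-\e^{-\sigma}}.
\end{align*}
Multiplying numerator and denominator by $\e^{\sigma}$ turns this into $\frac{\e^{-\sigma N}}{\e^{\sigma}-1}$. This is exactly the claimed bound $\dch(h,N) \le \frac{c}{\e^{\sigma}-1}\,\e^{-\sigma N}$, and it holds for every $N \in \N$ (indeed for every $N \in \Np$).

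There is no genuine obstacle; the only points needing care are bookkeeping. First, the sum starts at $n = N+1$, not $n = N$, which is what produces the factor $\e^{\sigma}-1$ rather than $1-\e^{-\sigma}$ in the denominator. Second, \eqref{eq:expocoeffs} is assumed for all $n \in \Np$, so it applies to every index in the tail. As a remark, combining this with the contour-integral derivation of \eqref{eq:expocoeffs} from \eqref{contour} shows that, for $h$ analytic in $\mathcal{E}_{\rho_0}$, we get $\dch(h,N) = \mathcal{O}(\rho^{-N})$ for every $1<\rho<\rho_0$. This is the analogue of Lemma~\ref{Lemma:expodecay} with $\alpha = 1$.
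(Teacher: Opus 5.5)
Your proposal is correct and follows the paper's proof essentially verbatim: you bound each tail coefficient by $c\,\e^{-\sigma n}$ and sum the geometric series from $n=N+1$, giving $\frac{c\,\e^{-\sigma(N+1)}}{1-\e^{-\sigma}} = \frac{c}{\e^{\sigma}-1}\,\e^{-\sigma N}$. Your bookkeeping remarks on the starting index, and the observation that the bound also holds for $N=0$, are accurate but not needed.
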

\begin{proof} Using (\ref{eq:expocoeffs}), the estimate follows directly from
$$  \textstyle \dch(h,N) \leq c\sum\limits_{n=N+1}^{\infty}\e^{-\sigma n}
=\frac{c\,\e^{-\sigma(N+1)}}{1-\e^{-\sigma}}
=\frac{c}{\e^{\sigma}-1}\,\e^{-\sigma N}.$$
\vspace*{-8mm}

\end{proof}

Applying an $M$-point trapezoidal rule to the contour integral in (\ref{contour}), the Chebyshev coefficients
can alternatively be computed 
with conveniently chosen $\rho \in (1, \rho_0)$ as
$$ \textstyle
a_n^{(M,\rho)}[h] :=\frac{2}{M \rho^n}\,\sum\limits_{j=0}^{M-1} h \big(\frac{1}{2} \, \rho \, {\mathrm e}^{2\pi {\mathrm i}\,j/M} + \frac{1}{2} \, \rho^{-1} \, {\mathrm e}^{-2\pi {\mathrm i}\,j/M}\big)\, {\mathrm e}^{-2\pi {\mathrm i}\,j n/M}
$$
stably and with high accuracy by FFT (see \cite{Wang17}). In contrast
to \eqref{eq:DCTcoeffs}, this sum uses samples of $h$ on the Bernstein ellipse $E_{\rho}$ rather than
on $I$. In the limit case $\rho = 1$ with $M = 2P$, the quadrature formulas $a_n^{(M,\rho)}[h]$ coincide with \eqref{eq:DCTcoeffs}, i.e., $a_n^{(2P,1)}[h] = a_n^{(P)}[h]$ for $n=0,\ldots,P$.

\subsection{Chebyshev quadrature error and reconstruction error}
\label{sec:Cerror}

Using the aliasing formula (\ref{eq:aliasingcheb}) and the Chebyshev decay rate, we can now analyse the errors
$\Ech_{P}(h)$ in (\ref{eq:Ech}) and  $\Rch_{P,L}(h)$ in (\ref{eq:EN}).

\begin{theorem}
	\label{Cor:Ech}
	Let $P \in \N \setminus \{1\}$ and $L \le P$ be fixed and let $h \in C(I)$ satisfy
	$\sum_{n=0}^{\infty}|a_n[h]| < \infty$. Then the parametric quadrature  error \eqref{eq:Ech}
	satisfies
	$$
	\Ech_{P}(h) \leq 2\,\dch(h,P)\,.
	$$
	Furthermore, the reconstruction error $\Rch_{P,L}(h) = \|h - t_{P,L}\|_{C(I)} $ in \eqref{eq:EN} satisfies
	\begin{equation}
		\label{eq:mainestimate}
		 \textstyle \Rch_{P,L}(h) \leq \dch(h,L) + \dch(h,P).
	\end{equation}
	 In the special case $L = P$ we have
	$\Rch_{P,P}(h) \leq 2\,\dch(h,P)$.
\end{theorem}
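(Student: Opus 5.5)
We plan to derive both estimates directly from the aliasing formula of Lemma~\ref{Lemma:aliasingcheb} together with the definition \eqref{eq:dch} of the Chebyshev decay rate. The proof has three steps: first the quadrature error for $n \le P-1$, then the endpoint $n=P$, and finally the reconstruction error.

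\textbf{Quadrature error, $0\le n\le P-1$.} Here $\varepsilon_n^{(P)}=1$, so by \eqref{eq:aliasrem}
$$
\textstyle |a_n[h]-a_n^{(P)}[h]| \le \sum_{k=1}^\infty \bigl(|a_{n+2kP}[h]| + |a_{2kP-n}[h]|\bigr).
$$
All indices $n+2kP$ and $2kP-n$ with $k\ge1$ exceed $P$. We check that they are pairwise distinct:
\begin{itemize}
\item each family is strictly increasing in $k$;
\item an equality $n+2kP = 2k'P-n$ would force $2n \in 2P\Z$, hence $n\in\{0,P\}$.
\end{itemize}
For $n=0$ the two families coincide, giving $2\sum_k |a_{2kP}[h]| \le 2\,\dch(h,P)$. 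For $1\le n\le P-1$ the indices are distinct, so the bound is $\le \dch(h,P)$.

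\textbf{Quadrature error, $n=P$.} Here $\varepsilon_P^{(P)}=\frac12$. By the special case of Lemma~\ref{Lemma:aliasingcheb},
$$
\textstyle \tfrac12 a_P^{(P)}[h] = \sum_{k\ge1} a_{(2k-1)P}[h] = a_P[h] + \sum_{k\ge1} a_{(2k+1)P}[h].
$$
Hence $|a_P[h]-\tfrac12 a_P^{(P)}[h]| \le \sum_{k\ge1}|a_{(2k+1)P}[h]| \le \dch(h,P)$. Taking the maximum over $n$ gives $\Ech_P(h)\le 2\,\dch(h,P)$. The only delicate points are the bookkeeping of the coinciding indices at $n=0$, which produces the factor $2$, and the weight $\varepsilon_P^{(P)}$ at $n=P$.

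\textbf{Reconstruction error.} We write $h=\sum' a_n[h]T_n$, which converges uniformly, and use $|T_n|\le 1$ on $I$ to split
$$
\textstyle h - t_{P,L} = \sum_{n>L} a_n[h]T_n + \sumprime_{n=0}^{L}\bigl(a_n[h]-\varepsilon_n a_n^{(P)}[h]\bigr)T_n,
$$
with the weight $\tfrac12$ at $n=0$ and $\varepsilon_L^{(P)}$ at $n=L$. The first sum is bounded by $\dch(h,L)$.

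The obstacle is the second sum. Estimating it crudely by $(L+1)\,\Ech_P(h)$ would lose a factor, so instead we bound it by the total mass of aliased coefficients. By \eqref{eq:aliasrem}, the sum is bounded by
$$
\textstyle \tfrac12|r_0^{(P)}[h]| + \sum_{n=1}^{L}|r_n^{(P)}[h]| \quad \text{(adjusted at } n=P\text{)}.
$$
The key observation is that each index $m>P$ arises from at most one $n\in\{0,\dots,P\}$ in the representation $m=\pm n+2kP$. Indeed, $m \bmod 2P$ determines $n$ uniquely as $\min(m \bmod 2P,\ 2P - m \bmod 2P)$. For $n=0$ the index $m$ appears twice in $r_0^{(P)}[h]$, but this is compensated by the weight $\tfrac12$. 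For $n=P$ with $L=P$, the computation above shows that each $m=(2k+1)P$ appears once. Consequently
$$
\textstyle \sumprime_{n=0}^{L}|\ldots| \le \sum_{m>P}|a_m[h]| = \dch(h,P).
$$
This yields \eqref{eq:mainestimate}, and setting $L=P$ gives $\Rch_{P,P}(h)\le 2\,\dch(h,P)$. This disjointness argument is the main step to verify carefully.
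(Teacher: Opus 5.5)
Your proposal is correct and follows essentially the same route as the paper. The quadrature bound comes from the aliasing remainder \eqref{eq:aliasrem} via the triangle inequality, with the weight $\varepsilon_P^{(P)}=\tfrac12$ handled through $\tfrac12 a_P^{(P)}[h]-a_P[h]=\sum_{k\ge1}a_{(2k+1)P}[h]$. The reconstruction bound comes from splitting into degree truncation and sampling aliasing and noting that all aliased indices $m>P$ are pairwise disjoint. Your residue-mod-$2P$ argument is a cleaner justification of that disjointness, and your remark that the factor $2$ in $\Ech_P(h)\le 2\,\dch(h,P)$ is only needed at $n=0$ slightly sharpens the paper's bookkeeping.
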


\begin{proof}
1.\ The stated bound for  $\Ech_{P}(h)$ directly follows from
\eqref{eq:aliasrem} by the triangle inequality. For $n \leq P-1$, the occurring
indices satisfy $n+2kP \geq 2P > P$ and $2kP-n \geq 2P - (P-1) = P+1$, so that
each of the two sums is bounded by $\dch(h,P)$. For $n=P$ we have
$\tfrac12\,a_P^{(P)}[h] - a_P[h] = \sum_{k=1}^{\infty} a_{(2k+1)P}[h]$, whose
indices exceed $P$ as well.

2.\ We estimate the reconstruction error $\Rch_{P,L}(h)$. We consider first the case $L<P$.
Inserting $a_n^{(P)}[h] =
a_n[h] + r_n^{(P)}[h]$ into \eqref{eq:samplingpoly} yields
the
error decomposition
\begin{equation}
	\label{eq:errordecomp}
	 \textstyle h(x) - t_{P,L}(x) = \underbrace{ \textstyle \sum\limits_{n=L+1}^{\infty}a_n[h]\,T_n(x)}_{\text{degree truncation}}
	- \underbrace{ \textstyle \sumprime_{n=0}^L r_n^{(P)}[h]\,T_n(x)}_{\text{sampling aliasing}}\,,
\end{equation}
where the prime indicates that the first term of the second sum is multiplied by $\tfrac12$.

The degree-truncation term is bounded by
$\sum_{n=L+1}^\infty|a_n[h]| = \dch(h,L)$  since $|T_n(x)| \le 1$ for $x \in I$. For the sampling aliasing term
each remainder $r_n^{(P)}[h]$, $n =0,\ldots,L$, is a sum of coefficients $a_m[h]$
with $m=n+2kP\geq 2P$ and $m=2kP-n \geq 2P-L\geq P+1$, $k\in\N$.
Since $|T_n(x)|\le 1$ on $I$, the sampling aliasing term is bounded by
$\frac12\big|r_0^{(P)}[h]\big| + \sum_{n=1}^{L}\big|r_n^{(P)}[h]\big|$, and by \eqref{eq:aliasrem}
this is at most
$$
\textstyle \sum\limits_{k=1}^{\infty}\big|a_{2kP}[h]\big|
+ \sum\limits_{n=1}^{L}\sum\limits_{k=1}^{\infty}\Big(\big|a_{n+2kP}[h]\big| + \big|a_{2kP-n}[h]\big|\Big)\,,
$$
 where all occurring indices are pairwise disjoint.
Consequently
$$
\textstyle \sumprime_{n=0}^L \,\big|r_n^{(P)}[h]\big| \leq \sum\limits_{m=P+1}^\infty|a_m[h]| = \dch(h,P)\,.
$$
For $L=P$  we obtain similarly with $
\textstyle \tilde{r}_P^{(P)}[h] := \tfrac12\,a_P^{(P)}[h] - a_P[h] $
that
\begin{equation}
	\label{eq:errordecompP}
\textstyle 	h(x) - t_{P,P}(x) = \sum\limits_{n=P+1}^{\infty}a_n[h]\,T_n(x)
	- \sumprime_{n=0}^{P-1} r_n^{(P)}[h]\,T_n(x)
	- \tilde{r}_P^{(P)}[h]\,T_P(x)\,,
\end{equation}
again with the prime indicating that the term $n=0$ is halved.
The same argument applies to $r_n^{(P)}[h]$, $n =0,\ldots,P-1$, where
now $m=2kP-n\geq 2P-(P-1)=P+1$, the extra term obeys
$\big|\tilde{r}_P^{(P)}[h]\big|\leq\sum_{k=1}^{\infty}\big|a_{(2k+1)P}[h]\big|$,
where the indices  in the sum are odd multiples of $P$ and hence are disjoint from the
previous ones, which
lie in $(2kP-P,2kP+P)$. Thus the
sampling aliasing term is again bounded by $\dch(h,P)$  and the error estimates follow.
\end{proof}

\begin{remark}
\label{Rem:comparison}
1. The estimate $\Ech_{P}(h) \le 2\,\dch(h,P)$ of Theorem~\ref{Cor:Ech}
 is the exact analogue of
Theorem~\ref{Thm:samplingcospolynomial}. In all three settings the quadrature  error
is bounded by twice a transform-side tail, $\dpl(\fcos,P)$, $\dpl(\fsin,P)$ and
$\dch(h,P)$, respectively. The additional function-side tail $\sqrt
L\,\dpl(f,L)$ of \eqref{eq:cosmain} stems from the truncation of the sampling sum
at $n = \frac{LP}{2}$ and has no counterpart in the Chebyshev case, where the finite sum
\eqref{eq:DCTcoeffs} uses all available samples. Consequently no scaling factor
$L^{-1/2}$ is needed in \eqref{eq:Ech}.\\
For examples of Chebyshev expansions of special functions we refer to \textnormal{\cite[Table~A.2]{PlPoStTa23}}.

2. The case distinction $\varepsilon_L^{(P)}$ in \eqref{eq:samplingpoly} is essential for $L<P$.
Halving the top coefficient $a_L^{(P)}[h]$ in this case as well would add the uncorrected term
$\tfrac12 a_L[h]\,T_L(x)$ to \eqref{eq:errordecomp}, and $a_L[h]$ belongs neither to the
degree-truncation tail $\dch(h,L)$ nor to the aliasing tail $\dch(h,P)$, so that
\eqref{eq:mainestimate} would fail for rapidly decaying coefficients. For $L=P$, by contrast,
halving is required. It yields the interpolating polynomial and, through
$a_P[h]-\tfrac12 a_P^{(P)}[h]=-\sum_{k\ge1} a_{(2k+1)P}[h]$, replaces the otherwise unbounded
contribution $a_P[h]$ by genuine aliasing of indices larger than  $P$.
\end{remark}

\begin{corollary}
	\label{Thm:Cpolydecay}
	Let $P\in \N\setminus \{1\}$ and $L\in \N$ with $L \leq P$ be fixed.
	\begin{enumerate}
	\item For some
	$r\in\N$ with $r < L$, let $h\in C^{r+1}(I)$ be given. Then
	\begin{equation}
		\label{eq:polydecayerror}
		\textstyle \Rch_{P,L}(h) \leq \tfrac{2}{r}\|h^{(r+1)}\|_{C(I)}
		\big(\tfrac{1}{(L-r)^r} + \tfrac{1}{(P-r)^r}\big),
		\quad
		\Ech_{P}(h) \leq \tfrac{4}{r}\|h^{(r+1)}\|_{C(I)}(P-r)^{-r}.
	\end{equation}
	For $L=P$ the Chebyshev interpolation polynomial $t_{P,P}$ satisfies
	$\Rch_{P,P}(h) = {\mathcal O}(P^{-r})$ as $P \to \infty$.
	\item
Let
	$h \in C(I)$ satisfy \eqref{eq:expocoeffs} with some $c>0$ and $\sigma>0$. Then
\begin{equation}
		\label{eq:expodecayerror}
	\textstyle 	\Rch_{P,L}(h) \leq \frac{c}{\e^{\sigma}-1}
		\bigl(\e^{-\sigma L}+\e^{-\sigma P}\bigr)\,, \qquad
		\Ech_{P}(h) \leq \frac{2c}{\e^{\sigma}-1}\,\e^{-\sigma P}\,.
	\end{equation}
	For $L = P$ the Chebyshev interpolation polynomial $t_{P,P}$ fulfills
	$\Rch_{P,P}(h)\leq\frac{2c}{\e^{\sigma}-1}\,\e^{-\sigma P}$.	
	\end{enumerate}	
\end{corollary}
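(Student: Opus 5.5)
The corollary follows by inserting the two Chebyshev decay-rate bounds of Subsection~\ref{sec:aliasingcheb} into Theorem~\ref{Cor:Ech}. That theorem gives, for $L \le P$, the estimates $\Rch_{P,L}(h) \le \dch(h,L) + \dch(h,P)$ and $\Ech_P(h) \le 2\,\dch(h,P)$. So the whole task is to bound $\dch(h,L)$ and $\dch(h,P)$ in each of the two regimes and to check that the hypotheses of Theorem~\ref{Cor:Ech} hold.

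For part 1, take $r \in \N$ with $r < L \le P$ and $h \in C^{r+1}(I)$. Since $r \ge 1$, we have $h \in C^1(I)$, so $\sum_n |a_n[h]| < \infty$, as recalled at the start of Section~\ref{sec:cheb}. Alternatively, this follows directly from \eqref{eq:polydecaycoeffs}, since the coefficients are $\mathcal O(n^{-2})$. Hence Theorem~\ref{Cor:Ech} applies. Because $L > r$ and $P \ge L > r$, the tail bound \eqref{eq:polydecaytail} of Lemma~\ref{Lemma:Cpolydecay} can be used with $N = L$ and with $N = P$. This gives $\dch(h,L) \le \frac{2}{r}\|h^{(r+1)}\|_{C(I)}(L-r)^{-r}$ and the same bound with $P$ in place of $L$. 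Adding the two bounds yields the estimate for $\Rch_{P,L}(h)$, and doubling the second yields the estimate for $\Ech_P(h)$. For $L = P$ the bound becomes $\frac{4}{r}\|h^{(r+1)}\|_{C(I)}(P-r)^{-r}$. Since $(P-r)^{-r} \le (2/P)^{r}$ for $P \ge 2r$, this is $\mathcal O(P^{-r})$.

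For part 2, condition \eqref{eq:expocoeffs} immediately gives $\sum_n |a_n[h]| \le c/(1-\e^{-\sigma}) < \infty$, so again Theorem~\ref{Cor:Ech} applies. Lemma~\ref{lem:Cexpodecay} with $N = L$ and $N = P$ gives $\dch(h,N) \le \frac{c}{\e^\sigma - 1}\e^{-\sigma N}$. Inserting these into the two inequalities of Theorem~\ref{Cor:Ech} yields \eqref{eq:expodecayerror}, and setting $L = P$ gives the interpolation bound. Note that $L, P \ge 1$ holds automatically, as the lemma requires.

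There is no genuine obstacle here. The only points needing care are the bookkeeping of hypotheses: $r \ge 1$ is needed for the tail estimate \eqref{eq:polydecaytail}, $N > r$ is needed for both $N = L$ and $N = P$, and absolute summability of the coefficients is needed to invoke Theorem~\ref{Cor:Ech}. Each of these is supplied by the assumptions as described above.
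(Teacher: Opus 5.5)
Your proposal is correct and matches the paper's proof, which simply inserts the tail bounds of Lemma~\ref{Lemma:Cpolydecay} and Lemma~\ref{lem:Cexpodecay} (with $N=L$ and $N=P$) into the two inequalities $\Rch_{P,L}(h)\le\dch(h,L)+\dch(h,P)$ and $\Ech_P(h)\le 2\,\dch(h,P)$ of Theorem~\ref{Cor:Ech}. Your hypothesis checks ($\sum_n|a_n[h]|<\infty$, $r\ge 1$, $P\ge L>r$) and the $\mathcal{O}(P^{-r})$ argument via $(P-r)^{-r}\le(2/P)^{r}$ for $P\ge 2r$ make explicit what the paper leaves implicit.
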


\begin{proof} The estimates \eqref{eq:polydecayerror} and \eqref{eq:expodecayerror} follow
directly from Theorem~\ref{Cor:Ech} together with Lemmas~\ref{Lemma:Cpolydecay} and
\ref{lem:Cexpodecay}.
\end{proof}

We observe that in both cases considered in Corollary \ref{Thm:Cpolydecay}, the smallest reconstruction error
estimate is achieved for $L=P$.

\subsection{Chebyshev quadrature and reconstruction error for rational  functions}
\label{sec:Crat}

We now consider rational functions, which provide an exponential decay of the Chebyshev
coefficients.  We investigate how tight the obtained error estimates are.

First we compute the Chebyshev coefficients explicitly for a rational function
\begin{align}\label{simplepole}
 \textstyle h(x) = \sum\limits_{j=1}^J \frac{c_j}{x-z_j}, \qquad x \in I,
 \end{align}
with pairwise distinct simple poles $z_1,\ldots,z_J\in\C\setminus I$ and
	$c_1,\ldots,c_J\in\C\setminus\{0\}$.   Note that  in \cite{El64}, the representation of $a_n[h]$ (for $J=1$) has been derived differently using contour
	integration.
	
	\begin{theorem}
	\label{Thm:rational}
	Let $h$ be given as in $(\ref{simplepole})$. Assume that $w_j :=z_j+ s_j$ with $s_j := \sqrt{z_j^2-1}$, where the  branch of the square root $s_j$ is chosen in such a way that
	$|w_j|>1$. Then for
	$\rho_0:=\min\{|w_j|:\,j=1,\ldots,J\}>1$  we have
	\begin{equation}
		\label{eq:rationalcoeffs}
		 \textstyle a_n[h] = - 2\,\sum\limits_{j=1}^{J}\frac{c_j}{s_j}\,w_j^{-n}\,, \quad
		|a_n[h]| \leq \gamma_0\,\rho_0^{-n}\,,\quad
		\gamma_0 := 2\,\sum\limits_{j=1}^{J}\frac{|c_j|}{|s_j|}\,,
	\end{equation}
	for all $n\in\Np$. Consequently, the quadrature error $\Ech_{P}(h)$ and the reconstruction error $\Rch_{P,L}(h)$
	satisfy
$$
	\textstyle 	\Ech_{P}(h) \leq \frac{2\gamma_0}{\rho_0-1}\,\rho_0^{-P}\,, \quad \Rch_{P,L}(h) \leq \frac{\gamma_0}{\rho_0-1}
		\bigl(\rho_0^{-L}+\rho_0^{-P}\bigr)\,.
$$
	\end{theorem}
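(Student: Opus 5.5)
By linearity it suffices to treat a single pole. We fix $z \in \C\setminus I$, set $s := \sqrt{z^2-1}$ with the branch chosen so that $w := z + s$ satisfies $|w|>1$, and compute $a_n[(x-z)^{-1}]$.

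The cleanest route is through the generating function of the Chebyshev polynomials. For $|t|<1$ and $x \in I$,
$$
\textstyle \sum_{n\ge 0} t^n T_n(x) = \frac{1-tx}{1-2tx+t^2}.
$$
We take $t := w^{-1}$. Since $w + w^{-1} = 2z$ (because $w^{-1} = z - s$, as $(z+s)(z-s) = 1$), the denominator factors as
$$
1 - 2tx + t^2 = t\,(w + w^{-1} - 2x) = 2t\,(z-x).
$$
Hence $\sum_{n\ge0} w^{-n} T_n(x) = \frac{w - x}{2(z-x)}$. Together with $w - z = s$ this gives
$$
\textstyle \frac{1}{x-z} = -\frac{2}{s}\Big(\sum_{n\ge 0} w^{-n}T_n(x) - \tfrac12\Big) = -\frac{2}{s}\,\sumprime_{n=0}^{\infty} w^{-n}T_n(x).
$$
This series converges absolutely and uniformly on $I$, because $|T_n|\le 1$ and $|w|>1$. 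By orthogonality of the $T_n$ with respect to the weight in \eqref{eq:an1}, the expansion is the Chebyshev series, so $a_n = -\frac{2}{s}w^{-n}$ for all $n \in \Np$; the primed convention matches the $\frac12 a_0$ in \eqref{eq:ChebSeries}.

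The one point needing care is the choice of root. The two roots $z \pm s$ have product $1$, and $|w| \ne 1$ because $z \notin I$ (the Joukowski map sends the unit circle onto $I$). So exactly one root has modulus $>1$, and this fixes the branch of $s$ as stated. Also $s \ne 0$, since $z \ne \pm1$.

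Summing over $j$ gives \eqref{eq:rationalcoeffs}. The bound $|a_n[h]|\le \gamma_0\rho_0^{-n}$ follows from the triangle inequality and $|w_j|\ge\rho_0$.

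For the error bounds we apply Lemma \ref{lem:Cexpodecay} with $c=\gamma_0$ and $\e^{\sigma}=\rho_0$, which gives $\dch(h,N)\le \frac{\gamma_0}{\rho_0-1}\rho_0^{-N}$. Inserting this into Theorem \ref{Cor:Ech} (equivalently, Corollary \ref{Thm:Cpolydecay}, part 2) yields both estimates. The hypothesis $\sum_n |a_n[h]|<\infty$ of these results holds by the geometric bound.

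The only potential obstacle is justifying the generating-function identity for complex $t$ with $|t|<1$. This can be done by verifying it through the three-term recurrence, or by using $T_n(\cos\theta)=\Re \e^{\i n\theta}$ and a geometric series.
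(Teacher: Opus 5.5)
Your proof is correct, but it obtains the coefficients by a different route than the paper.

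The paper does not derive $a_n[(x-z_j)^{-1}]$. It posits the candidate coefficients $-\frac{2}{s_j}w_j^{-n}$ and lets $g$ be the uniformly convergent Chebyshev series they define. It then uses $x\,T_n(x)=\frac12\big(T_{n-1}(x)+T_{n+1}(x)\big)$ to compute the Chebyshev coefficients of $(x-z_j)\,g(x)$, which turn out to be $2,0,0,\ldots$, so $(x-z_j)\,g(x)=1$.

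You instead derive the expansion constructively from the generating function with $t=w^{-1}$. The factorization $1-2tx+t^2=2t(z-x)$ makes transparent why $w^{-n}$ appears. You also settle the existence and uniqueness of the branch and $s\neq 0$, which the paper leaves implicit.

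The paper's verification scheme has its own advantage: it is reused verbatim in the induction of Theorem~\ref{Lemma:powercauchykernel}, where the recurrence \eqref{eq:qk(n)} for $q_k$ comes out of exactly the same coefficient comparison. Your route would extend to higher-order poles by differentiating $\frac{1}{x-z}=-\frac{2}{s}\sumprime_{n=0}^{\infty}w^{-n}T_n(x)$ in $z$. With $\frac{\d w}{\d z}=\frac{w}{s}$, one step gives $\frac{2}{s^2}\big(n+\frac{z}{s}\big)w^{-n}$, consistent with $q_2$. The price is that you must justify termwise differentiation.

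One small correction to your last remark: since $t$ is complex, $\sum_n t^n T_n(\cos\theta)$ is not $\Re\sum_n (t\e^{\i\theta})^n$. Write $\cos n\theta=\frac12\big(\e^{\i n\theta}+\e^{-\i n\theta}\big)$ and sum two geometric series instead, or use the recurrence, as you also suggest.
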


\begin{proof}
We consider only  $h(x):= \frac{1}{x-z_j}$ with $z_j \in\C\setminus I$ for arbitrary $j \in \{1,\ldots,J\}$. Let $w_j = z_j+ s_j$ with $s_j =\sqrt{z_j^2-1}$ and $|w_j| >1$.
We study the Chebyshev expansion (\ref{eq:ChebSeries}) with $a_n:=
\tfrac{-2}{s_j}\,w_j^{-n}$, $n \in \Np$  and prove that this expansion converges to $h$, thereby showing
$a_n[h]:=
a_n$ for $n \in \Np$.

Since $|a_n| = \frac{2}{|s_j|}\,|w_j|^{-n}$ with $|w_j|>1$, we have $\sum_{n=0}^\infty |a_n| < \infty$, i.e.,
the Chebyshev series converges absolutely and uniformly on $I$ to a function
$g \in C(I)$. Using $x\,T_n(x) = \tfrac12\,\big(T_{n-1}(x) + T_{n+1}(x)\big)$,
we obtain
$$ \textstyle a_n[(x- z_j)\,g(x)] = \tfrac12\,\big(a_{n-1}[g] + a_{n+1}[g]\big) -
z_j\,a_n[g]\,,  \quad n \in \Np\,, $$
with $a_{-1}[g] := a_1[g]$. For $n=0$ we conclude with $(z_j + s_j)^{-1} = z_j - s_j$
$$
\textstyle a_0[(x- z_j)\,g(x)] = a_1[g] - z_j\,a_0[g] = \tfrac{-2}{s_j}\,(z_j - s_j) + z_j\,\tfrac{2}{s_j} = 2\,,
$$
and for $n \in \N$,
$$
a_n[(x- z_j)\,g(x)] = - \tfrac{w_j^{1-n}}{s_j}\,\big(1 + (z_j - s_j)^2 - 2 z_j\,(z_j - s_j)\big) = 0\,.
$$
Hence $(x- z_j)\,g(x) = 1$ for all $x \in I$, i.e., $g = h$.
The representation of $a_n[h]$ in (\ref{eq:rationalcoeffs}) follows by linearity. The corresponding
estimate for  $|a_n[h]|$ in (\ref{eq:rationalcoeffs})
can be directly concluded. Finally, the estimates of $\Ech_{P}(h)$ and $\Rch_{P,L}(h)$ follow from Corollary \ref{Thm:Cpolydecay}.
\end{proof}

\begin{remark}
Observe that by definition of $\rho_0$,  there exists one $w_j$ with $\rho_0 = |w_j|$, and therefore
the decay $\rho_0^{-n}$  in $(\ref{eq:rationalcoeffs})$ cannot be improved.
In other words, $\rho_0$ is the parameter of the greatest Bernstein region ${\mathcal E}_{\rho_0}$ in which
$h$ is analytic,  since $\sup_{n\in\Np}\,|a_n[h]|\,\rho^{\,n}=\infty$ for every $\rho > \rho_0$.
Furthermore, $\Ech_{P}(h)$ has optimal rate. Indeed, for the term $h_j(x)=\frac{c_j}{x-z_j}$ with
$\rho_0=|w_j|$ we obtain from \eqref{eq:aliasrem} and \eqref{eq:rationalcoeffs}, for
$0 \le n \le P-1$,
$$
\textstyle r_n^{(P)}[h_j] = \frac{-2c_j}{s_j}\,\big(w_j^{n}+w_j^{-n}\big)\,\frac{w_j^{-2P}}{1-w_j^{-2P}}\,,
$$
which is largest in modulus for $n=P-1$, whereas the term $n=P$ of \eqref{eq:Ech}, namely
$\big|a_P[h_j]-\frac12 a_P^{(P)}[h_j]\big| = \big|\sum_{k\ge1}a_{(2k+1)P}[h_j]\big|$, is of order
$\rho_0^{-3P}$.  Note that the weight $\varepsilon_P^{(P)}=\frac12$ must not be omitted here. Hence
the maximum in \eqref{eq:Ech} is attained at $n=P-1$ for large $P$ and
$$
\textstyle \Ech_P(h_j) \;\ge\; \big|\tfrac{2c_j}{s_j}\big|\;\frac{1-\rho_0^{-2P+2}}{1+\rho_0^{-2P}}\;\rho_0^{-P-1}\,,
$$
which has decay rate $\rho_0^{-P}$ and matches Theorem $\ref{Thm:rational}$ up to the constant
factor $\frac{2\rho_0}{\rho_0-1}$.
\end{remark}
	
Finally, we consider the Chebyshev expansion of a rational function with poles of higher order.
Again, we can explicitly derive the Chebyshev coefficients and show that the obtained estimates for
the corresponding parametric quadrature errors are tight.

\begin{theorem}[Chebyshev expansion of a power of the Cauchy kernel]
	\label{Lemma:powercauchykernel}
	Let $k \in \mathbb{N}$ be given. For fixed $z_0\in\mathbb{C}\setminus I$,
	let $s_0 := \sqrt{z_0^2-1}$ and $w_0 := z_0 + s_0$, where the
	branch of the square root $s_0$ is chosen in such a way that $|w_0|>1$.\\
   Then the Chebyshev coefficients of the $k$th power of the Cauchy kernel
   $h_k(x) := \tfrac{1}{(x-z_0)^k}$, $x \in I$,
    are given as
   \begin{equation}
   \label{eq:an[hk]}
   \textstyle a_n[h_k] =  \frac{2}{\pi} \int\limits_0^{\pi} \frac{\cos(n\theta)}{(\cos\theta -z_0)^k} \d \theta =
   \tfrac{2\,(-1)^k}{(k-1)!\,s_0^k}\,q_k(n)\,w_0^{-n}\,,\quad n\in {\mathbb N}_0\,, \;k\in \mathbb N\,,
   \end{equation}
   with a monic polynomial $q_k(n)$ of degree $k-1$, which satisfies  the recurrence relation
   \begin{equation}
   \label{eq:qk(n)}
  \textstyle  - (k-1)\,s_0\,q_{k-1}(n) = \tfrac{w_0}{2}\,q_k(n-1) + \tfrac{1}{2w_0}\,q_k(n+1) - z_0\,q_k(n)\,,\quad q_1(n) = 1\,,
   \end{equation}
   with the initial condition
   \begin{equation}
   \label{eq:initialcond}
  \textstyle  - (k-1)\,s_0\,q_{k-1}(0) = w_0^{-1}\,q_k(1) - z_0\,q_k(0)\,, \quad k \in {\mathbb N}\setminus \{1\}\,.
   \end{equation}
   Further the Chebyshev coefficients $a_n[h_k]$ can be estimated by
   \begin{equation}
   \label{eq:|an[hk]|}
   \big|a_n[h_k]\big| \leq c_k\,(n+1)^{k-1}\,|w_0|^{-n}\,, \quad n \in {\mathbb N}_0\,,
   \end{equation}
   with some $c_k > 0$.\\
   Let $P\in \mathbb{N}\setminus \{1\}$ and $L \in \mathbb N$ with $L \leq P$
   and $(L + 1)\,\ln (|w_0|) > k -1$
   be fixed.
 Then the quadrature error $\Ech_{P}(h_k)$ and the reconstruction error $\Rch_{P,L}(h_k)$
	satisfy
$$
	\textstyle 	\Ech_{P}(h_k) \leq \frac{2c_k\,(P+1)^k  |w_0|^{-P}}{(P+1) \,\ln (|w_0|) +1 -k}\,,
	\quad \Rch_{P,L}(h_k) \leq \frac{c_k\, (P+1)^k  |w_0|^{-P}}{(P+1)\, \ln (|w_0|) + 1 - k} +
		\frac{c_k (L+1)^k  |w_0|^{-L}}{(L+1)\, \ln (|w_0|) + 1 - k}\,.
$$
\end{theorem}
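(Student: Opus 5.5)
The plan is to prove \eqref{eq:an[hk]} by induction on $k$, using differentiation with respect to the parameter $z_0$. For $k=1$, Theorem~\ref{Thm:rational} (with $J=1$, $c_1=1$) gives $a_n[h_1]=-\frac{2}{s_0}w_0^{-n}$, which is \eqref{eq:an[hk]} with $q_1\equiv 1$. Next, $h_k(x)=\frac{1}{(k-1)!}\,\partial_{z_0}^{k-1}(x-z_0)^{-1}$. Since the integrand in \eqref{eq:an1} is analytic in $z_0\in\C\setminus I$ and differentiation under the integral is legitimate, we get $a_n[h_{k}]=\frac{1}{k-1}\,\partial_{z_0}a_n[h_{k-1}]$. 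With $\partial_{z_0}s_0=z_0/s_0$ and $\partial_{z_0}w_0=w_0/s_0$, differentiating $s_0^{-(k-1)}q_{k-1}(n)w_0^{-n}$ yields $s_0^{-k}w_0^{-n}\big(-(k-1)z_0\,q_{k-1}(n)/s_0\cdot s_0\ldots\big)$. In practice this shows that $a_n[h_k]$ has the form $\frac{2(-1)^k}{(k-1)!s_0^k}\,q_k(n)w_0^{-n}$, where $q_k$ is a polynomial in $n$ with coefficients depending on $z_0$. The leading term comes from $\partial_{z_0}w_0^{-n}=-\frac{n}{s_0}w_0^{-n}$ applied to the monic leading term $n^{k-2}$ of $q_{k-1}$. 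This gives leading coefficient $n^{k-1}$ with the right sign, so $q_k$ is monic of degree $k-1$.

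Rather than track the derivative formula, I would derive the recurrence \eqref{eq:qk(n)} directly, mirroring the proof of Theorem~\ref{Thm:rational}. Since $(x-z_0)h_k=h_{k-1}$, the identity $a_n[(x-z_0)g]=\frac12(a_{n-1}[g]+a_{n+1}[g])-z_0a_n[g]$ (with $a_{-1}:=a_1$) gives
$$\textstyle a_n[h_{k-1}]=\tfrac12 a_{n-1}[h_k]+\tfrac12 a_{n+1}[h_k]-z_0a_n[h_k].$$
Inserting the ansatz and multiplying by $\frac{(k-1)!\,s_0^k}{2(-1)^k}w_0^{n}$ turns the left side into $-(k-1)s_0q_{k-1}(n)$ and the right side into $\frac{w_0}{2}q_k(n-1)+\frac{1}{2w_0}q_k(n+1)-z_0q_k(n)$, which is \eqref{eq:qk(n)}. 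For $n=0$ the convention $a_{-1}=a_1$ gives $a_1[h_k]-z_0a_0[h_k]$, i.e.\ $w_0^{-1}q_k(1)-z_0q_k(0)$, which is \eqref{eq:initialcond}. Existence and uniqueness of a monic degree-$(k-1)$ solution I take from the differentiation argument above. Alternatively, one can check that $\frac{w_0}{2}p(n-1)+\frac{1}{2w_0}p(n+1)-z_0p(n)$ lowers the degree by one with leading factor $-(\deg p)\,s_0$, using $\frac{w_0}{2}+\frac{1}{2w_0}=z_0$ and $\frac{1}{2w_0}-\frac{w_0}{2}=-s_0$. I expect this to be the main obstacle: reconciling the recurrence with the boundary condition and confirming monicity consistently.

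The bound \eqref{eq:|an[hk]|} is then immediate. Since $q_k$ has degree $k-1$, we have $|q_k(n)|\le C(n+1)^{k-1}$, so $c_k:=\frac{2C}{(k-1)!|s_0|^k}$ works. For the error bounds, set $\rho=|w_0|$ and $\sigma=\ln\rho$, and estimate
$$\textstyle \dch(h_k,N)\le c_k\sum_{n>N}(n+1)^{k-1}\e^{-\sigma n}\le c_k\int_N^\infty (t+1)^{k-1}\e^{-\sigma t}\,\d t.$$
The integral comparison is valid because the summand is decreasing for $n\ge N$ when $(N+1)\sigma>k-1$. Writing $g(t)=(t+1)^{k-1}\e^{-\sigma t}$, we have $-g'(t)=g(t)\big(\sigma-\frac{k-1}{t+1}\big)\ge g(t)\,\frac{(N+1)\sigma-(k-1)}{N+1}$ for $t\ge N$. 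Integrating gives
$$\textstyle \int_N^\infty g\le\frac{(N+1)g(N)}{(N+1)\sigma+1-k}.$$
The stated denominator is $(N+1)\sigma+1-k$ and numerator $(N+1)^k\rho^{-N}$, consistent with this bound. Finally, applying this with $N=P$ and $N=L$ (the hypothesis on $L\le P$ covers both) and invoking Theorem~\ref{Cor:Ech} gives both stated estimates.
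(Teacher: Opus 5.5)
Your proposal is correct. For the coefficient formula you run the paper's argument in the opposite logical direction. The paper proves it by verification. It takes a monic $q_{k+1}$ obeying \eqref{eq:qk(n)} and \eqref{eq:initialcond} as given and defines $g$ by the resulting coefficients. It then uses $a_n[(x-z_0)g]=\frac12\bigl(a_{n-1}[g]+a_{n+1}[g]\bigr)-z_0a_n[g]$ to get $(x-z_0)g=h_k$, hence $g=h_{k+1}$; the existence of such a $q_{k+1}$ is taken for granted there. You instead \emph{construct} the coefficients from $a_n[h_k]=\frac{1}{k-1}\partial_{z_0}a_n[h_{k-1}]$. This is legitimate, since $w_0$ is the inverse Joukowski map and hence analytic on $\C\setminus I$. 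You then read off \eqref{eq:qk(n)} and \eqref{eq:initialcond} as necessary consequences of the same identity. Your route gives existence, uniqueness (a polynomial is fixed by its values on $\Np$) and monicity at no extra cost, so the ``obstacle'' you anticipate does not arise. The price is the analytic dependence on $z_0$, which the paper's purely algebraic check avoids. Your displayed derivative is garbled, however; the product rule yields
\[
q_k(n)=\Bigl(n+(k-1)\tfrac{z_0}{s_0}\Bigr)q_{k-1}(n)-s_0\,\partial_{z_0}q_{k-1}(n).
\]
Monicity propagates because the leading coefficient $1$ of $q_{k-1}$ is independent of $z_0$, so $\partial_{z_0}q_{k-1}$ has degree at most $k-3$; this formula reproduces $q_2,q_3,q_4$ of the subsequent remark. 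In your alternative degree-lowering check, note that \eqref{eq:qk(n)} fixes $q_k$ only up to an additive constant $C$. The condition \eqref{eq:initialcond} then pins $C$ down, since adding $C$ changes its right-hand side by $(w_0^{-1}-z_0)C=-s_0C$ and $s_0\neq0$. For the tail, the paper compares with $\int_{P+1}^\infty x^{k-1}|w_0|^{-x}\d x$, rewrites this as $\Gamma\bigl(k,(P+1)\ln|w_0|\bigr)$ and quotes $\Gamma(k,w_1)\le\frac{w_1^k}{w_1+1-k}\e^{-w_1}$. Your differential inequality $-g'\ge\frac{(N+1)\sigma-(k-1)}{N+1}\,g$ on $[N,\infty)$ is precisely the mechanism behind that inequality. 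It yields the identical constant and keeps the argument self-contained.
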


\begin{proof} 1. We show \eqref{eq:an[hk]} by induction with respect to $k \in \mathbb N$. Obviously, \eqref{eq:an[hk]} is true in the case $k =1$ with $q_1(n)=1$ by Theorem \ref{Thm:rational}. Assume now that
\eqref{eq:an[hk]} holds for some $k \in \mathbb N$. Let $g$ be the function with the Chebyshev coefficients
$$
a_n[g] = \tfrac{2\,(-1)^{k+1}}{k!\,s_0^{k+1}}\,q_{k+1}(n)\,w_0^{-n}\,,\quad n\in {\mathbb N}_0\,.
$$
Since $\sum_{n\ge0}|a_n[g]| < \infty$, the corresponding Chebyshev series \eqref{eq:ChebSeries}
converges absolutely and uniformly on $I$ to some $g \in C(I)$. For $g_1(x) := (x-z_0)\,g(x)$,
the recurrence $x\,T_n(x) = \frac12\big(T_{n-1}(x)+T_{n+1}(x)\big)$ gives 
$$ \textstyle a_n[g_1] = \frac12 a_{n-1}[g] + \frac12 a_{n+1}[g] - z_0\,a_n[g], \qquad n \in \Np, $$
 with
$a_{-1}[g] = a_1[g]$.   For $n=0$ it follows by \eqref{eq:initialcond} that
\begin{align*}
\textstyle a_0[g_1] &= a_1[g] \!- \!z_0\,a_0[g] =
 \tfrac{2(-1)^{k+1}}{k!s_0^{k+1}}\,\big(w_0^{-1}q_{k+1}(1) \!- \!z_0q_{k+1}(0)\big)
= \tfrac{2(-1)^k}{(k-1)!s_0^k}\,q_k(0) = a_0[{h_k}].
\end{align*}
For arbitrary $n \in \mathbb N$ we obtain by \eqref{eq:qk(n)} that
\begin{align*}
a_n[g_1] &=   \textstyle \tfrac12\,a_{n-1}[g] + \tfrac12\,a_{n+1}[g] - z_0\,a_n[g]\\
&= \textstyle  \tfrac{2\,(-1)^{k+1}}{k!\,s_0^{k+1}}\,\big(\tfrac{w_0}{2}\,q_{k+1}(n-1) + \tfrac{w_0^{-1}}{2}\,q_{k+1}(n+1) - z_0\,q_{k+1}(n)\big)\,w_0^{-n}\,,\\
&= \textstyle  \tfrac{2\,(-1)^k}{(k-1)!\,s_0^k}\,q_k(n)\,w_0^{-n} = a_n[{h_k}]\,.
\end{align*}

Hence
$g_1 = h_k$ on $I$, i.e.\ $g = h_{k+1}$.\\
2. Formula (\ref{eq:an[hk]}) yields  $|a_n[h_k]| = \tfrac{2}{(k-1)!\,|s_0|^k}\,|q_k(n)|\,|w_0|^{-n}$.
Since $q_k(n)$ is a monic polynomial of degree $k-1$, the supremum
$c_k:= \sup_{n \in \Np} \frac{|a_n[h_k]| }{(n+1)^{k-1} \,|w_0|^{-n}}$ is finite
and \eqref{eq:|an[hk]|} holds. Note that the function $x^{k-1}\,|w_0|^{-x}$, $x\in [P+1, \infty)$, is decreasing since

$$ \textstyle \frac{\d}{\d x}\big(x^{k-1}|w_0|^{-x}\big) = x^{k-2}|w_0|^{-x}\big(k-1-x\ln (|w_0|)\big) \le 0$$
 for $x \ge P+1$, which follows from the assumption $(P+1)\,\ln (|w_0|) \geq (L+1)\,\ln (|w_0|) > k-1$. Thus we can estimate
\begin{align*}
\textstyle \delta_{\mathrm{Ch}}(h_k,P)&=  \textstyle \sum\limits_{n=P+1}^{\infty} |a_n[h_k]| \le
c_k \sum\limits_{n=P+1}^{\infty} (n+1)^{k-1} |w_0|^{-n}  = c_k |w_0| \sum\limits_{n=P+2}^{\infty} n^{k-1} |w_0|^{-n}\\
&\le \textstyle c_k |w_0| \int\limits_{P+1}^{\infty} x^{k-1} \,|w_0|^{-x} \, \d x =
\frac{c_k |w_0|}{(\ln (|w_0|))^k} \int\limits_{(P+1) \ln (|w_0|)}^{\infty} u^{k-1} \, \e^{-u} \, \d u\\
&= \textstyle  \frac{c_k |w_0|}{(\ln (|w_0|))^k}\,\Gamma\big(k,(P+1) \ln (|w_0|)\big)\,.
\end{align*}
Using \cite[Proposition 2.7]{Pi20}, the upper incomplete gamma function can be estimated for
$w_1 := (P+1)\ln (|w_0|) > k-1$ by
$$
\textstyle \Gamma(k,w_1) := \int_{w_1}^{\infty} u^{k-1}\,\e^{-u}\,\d u \leq \frac{w_1^k}{w_1 + 1 - k}\,\e^{-w_1}\,.
$$
Thus we obtain with $|w_0|\,\e^{-w_1} = |w_0|^{-P}$ that
$$
\textstyle \delta_{\mathrm{Ch}}(h_k,P) \leq \frac{c_k\,(P+1)^k\,|w_0|^{-P}}{(P+1)\,\ln (|w_0|) + 1 -k}\,,
$$
where we have $(P+1)\,\ln (|w_0|) + 1 -k > 0$ by assumption.
The estimates for the quadrature error and the reconstruction error now follow from Theorem \ref{Cor:Ech}.
\end{proof}

\begin{remark}
The polynomials $q_k$ can be explicitly determined from \eqref{eq:qk(n)} and \eqref{eq:initialcond}.
We obtain for $k=2,\ldots, 5$,
\begin{align*}
q_2(n) &= n + \tfrac{z_0}{s_0}\,,\\
q_3(n) &= n^2 + \tfrac{3 z_0}{s_0}\,n + 2 + \tfrac{3}{s_0^2}\,,\\
q_4(n) &= n^3 + \tfrac{6 z_0}{s_0}\,n^2 + \big(11 + \tfrac{15}{s_0^2}\big)\,n + \tfrac{6z_0}{s_0} + \tfrac{15 z_0}{s_0^3}\,,\\
q_5(n) &= n^4 + \tfrac{10 z_0}{s_0}\,n^3 + \big(35 + \tfrac{45}{s_0^2}\big)\,n^2 +\big(\tfrac{50 z_0}{s_0} + \tfrac{105 z_0}{s_0^3}\big)\,n + 24 + \tfrac{120}{s_0^2} + \tfrac{105}{s_0^4}\,.
\end{align*}
Since $q_k$ is monic of degree $k-1$, formula \eqref{eq:an[hk]} yields the exact asymptotics
$$
\textstyle \big|a_n[h_k]\big| \sim \frac{2}{(k-1)!\,|s_0|^{k}}\;n^{k-1}\,|w_0|^{-n}\,,
\qquad n \to \infty\,.
$$
Hence the exponent $k-1$ in \eqref{eq:|an[hk]|} cannot be lowered and
$c_k \ge \frac{2}{(k-1)!\,|s_0|^k}$, so that the estimates of Theorem
$\ref{Lemma:powercauchykernel}$ are sharp in their $n$- and $P$-dependence.
A different representation of the Chebyshev coefficients for rational functions with poles of
higher order, based on associated Legendre functions, has been given in \textnormal{\cite[Section~4]{El64}}. Note that our explicit representation \eqref{eq:an[hk]} with \eqref{eq:qk(n)} and
\eqref{eq:initialcond} is very user friendly.
\end{remark}

\section{Numerical examples\label{sec:numerics}}

We illustrate our error estimates of Theorem \ref{Thm:samplingcospolynomial} and Theorem \ref{Cor:Ech} by concrete examples.

\subsection{Numerical experiments for the Fourier cosine and sine transform}

We consider a compactly supported function as well as rational and meromorphic functions.

\begin{example} \textbf{Fourier cosine transform of a compactly supported function.}
	\label{Ex:M2}
	Let $f(x) := 1 - x$ for $x \in [0,\,1]$ and $f(x) := 0$ for $x > 1$.
		Then we find
	$$
	 \textstyle \fcos(v) = \int\limits_0^1 (1 - x)\,\cos(2\pi v x)\,\d x
	=\tfrac{1}{2}\,\big( \frac{\sin(\pi v)}{\pi v}\big)^2 =
	 \tfrac{1}{2}\,\big(\sinc(\pi v)\big)^2\,, \quad v \in \Rp\,.
	$$
	Since $\mathrm{supp}\,f = [0,\,1]$,
	we conclude from Table $\ref{table1}$ (last line) that
	$\delta_+(f,L) = 0$ for $L\ge 2$. Further,  $\fcos \in C(\Rp)$ has polynomial decay with
	$b = 2$ (see Table $\ref{table1}$, first line).
	 Considering  $\fcos(v)\,(1 + v)^2$ separately on $[0,\,\tfrac{1}{2}]$
	and $[\tfrac{1}{2},\,\infty)$ shows that $| \fcos(v)|\,(1 + v)^2  \le d =\tfrac{9}{4}$ for all $v \in \Rp$.
	Thus, for any $L \geq  2$ it holds
	$$
	 \textstyle \Ecos_{P,L}(f) \leq 2d\,(2^2 - 1)\,\zeta(2)\,L^{-1/2}\,P^{-2}
	= \frac{9\,\pi^2}{4\,\sqrt{L}}\,P^{-2}\,.
	$$
	Hence for fixed $L \geq 2$ the error decays like $P^{-2}$ as $P \to \infty$. Taking $L=2$, we use samples $f(\frac{n}{P})$ for $n=0, \ldots, P$ to compute $c_{P,2}(v)$ in $(\ref{eq:approx})$ and the error
	$\frac{1}{\sqrt{2}}|\fcos(v) - c_{P,2}(v)|$ with $\fcos(v)=\tfrac{1}{2}\,\big(\sinc(\pi v)\big)^2$ for $v \in [0,\frac{P}{2}]$.
	Table $\ref{tab:Ex41}$ confirms this: the errors are reduced by a factor tending to $4$ whenever
	$P$ is doubled, i.e.\ they decay like $P^{-2}$, while the bound overestimates them by a factor
	between $78$ and $98$.
\end{example}

\begin{table}[htbp]
\footnotesize\centerline{\begin{tabular}{r|cccc}
\toprule
$P$ & $4$ & $8$ & $16$ & $32$\\
\midrule
$\Ecos_{P,2}(f)$ & $1.00\cdot10^{-2}$ & $2.82\cdot10^{-3}$ & $7.58\cdot10^{-4}$
                 & $1.97\cdot10^{-4}$\\
bound $\tfrac{9\pi^2}{4\sqrt L}P^{-2}$ & $9.81\cdot10^{-1}$ & $2.45\cdot10^{-1}$ & $6.13\cdot10^{-2}$
                 & $1.53\cdot10^{-2}$\\
ratio bound/error & $98.0$ & $86.9$ & $80.9$ & $77.8$\\
\bottomrule
\end{tabular}}
\caption{Scaled parametric quadrature error $\Ecos_{P,2}(f)$ for the compactly supported
function $f$ of Example \ref{Ex:M2} and the bound of Theorem \ref{Thm:samplingcospolynomial}
combined with Table \ref{table1}.}
\label{tab:Ex41}
\end{table}

\begin{example}
	\label{Ex:ratnum} \textbf{Fourier cosine transform of a rational function.}
	We consider  $f(x) = \big(x^2 + s_0^2\big)^{-1}$ with $s_0 = \tfrac{1}{10}$ to illustrate
	Corollary $\ref{Cor:ratFT}$ and Remark $\ref{remlast}$.
	The rational function $f$ has the simple poles $\pm\i s_0$. Within the framework
	\eqref{rational} we take $z_1 = -\i s_0$ in the open lower half plane, so that
	$f(x) = (x^2-z_1^2)^{-1}$ and $c_1 = 1$. By $(\ref{eq:ratcosK})$,
	$$  \textstyle
	\fcos(v) = \frac{\pi}{2 s_0}\,\e^{-2\pi s_0 v} = 5\pi\,\e^{-\pi v/5}\,, \quad v \in \Rp\,.
	$$
	Thus, by Table $\ref{table1}$ (second line), $|\fcos(v)|\,\e^{sv} \le d$ with $s = \frac{\pi}{5}$,
	$d = 5\pi$, while  \linebreak
	$c = \sup_{x\in\Rp}|f(x)|(1+x)^2 = 1+s_0^{-2} = 101$, attained at
	$x = s_0^2$, by Remark $\ref{remlast}$.
	
	Figure $\ref{fig:ratFT}$ shows $\Ecos_{P,L}(f)$ for $L \in \{256,\,1024,\,4096\}$. For each fixed
	$L$, the error is first reduced by the factor $1.88$ per step of $2$ in $P$, matching the
	predicted $\e^{2\pi s_0} = \e^{\pi/5} \approx 1.87$ of \eqref{eq:raterror}, and then settles on
	the plateau caused by the second, purely algebraic term of \eqref{eq:raterror}. The measured
	plateau values $4.88\cdot10^{-4}$, $6.10\cdot10^{-5}$, $7.63\cdot10^{-6}$ agree to three digits
	with $2L^{-3/2}$ and are independent of $P$. This is exactly the lower bound
	\eqref{eq:errorlow}, whose first term equals
	$L^{-1/2}\int_{L/2+1/P}^{\infty}f(x)\,\d x \sim 2L^{-3/2}$. The upper bound
	\eqref{eq:raterror} gives the same rate $L^{-3/2}$, with the larger constant
	$\frac{\pi^2}{2}c \approx 498$. Thus the transform side is sharp geometric and the function
	side is the bottleneck.
\end{example}

\begin{figure}[htbp]
	\centering
	\begin{tikzpicture}
	\begin{semilogyaxis}[
	width=0.60\textwidth, height=5.6cm,
	xlabel={$P$}, ylabel={error $E^{\cos}_{P,L}(f)$},
	grid=major,
	grid style={line width=0.3pt, draw=gray!30},
	major grid style={line width=0.4pt, draw=gray!60},
	xmin=2, xmax=40, ymin=1e-6, ymax=2,
	xtick={4,8,...,40},
	legend pos=north east, legend style={font=\footnotesize},
	line width=1.2pt
	]
	\addplot[black, dashed, line width=1pt, no marks, domain=2:40, samples=2] {2*exp(-0.3141592653*x)};
	\addlegendentry{$2\,\e^{-\pi s_0 P}$}
	\addplot[red, mark=+, mark size=3pt] coordinates {
		(2,9.405e-01) (4,3.287e-01) (6,1.561e-01) (8,8.057e-02) (10,4.258e-02)
		(12,2.266e-02) (14,1.208e-02) (16,6.442e-03) (18,3.437e-03) (20,1.833e-03)
		(22,9.782e-04) (24,5.219e-04) (26,4.880e-04) (28,4.882e-04) (30,4.882e-04)
		(32,4.882e-04) (34,4.882e-04) (36,4.882e-04) (38,4.882e-04) (40,4.882e-04)
	};
	\addlegendentry{$L=256$}
	\addplot[blue, mark=o, mark size=3pt] coordinates {
		(2,4.702e-01) (4,1.643e-01) (6,7.805e-02) (8,4.029e-02) (10,2.129e-02)
		(12,1.133e-02) (14,6.039e-03) (16,3.221e-03) (18,1.718e-03) (20,9.167e-04)
		(22,4.890e-04) (24,2.609e-04) (26,1.392e-04) (28,7.426e-05) (30,6.103e-05)
		(32,6.103e-05) (34,6.103e-05) (36,6.103e-05) (38,6.103e-05) (40,6.103e-05)
	};
	\addlegendentry{$L=1024$}
	\addplot[teal, mark=x, mark size=3pt] coordinates {
		(2,2.351e-01) (4,8.217e-02) (6,3.902e-02) (8,2.014e-02) (10,1.065e-02)
		(12,5.664e-03) (14,3.020e-03) (16,1.611e-03) (18,8.592e-04) (20,4.583e-04)
		(22,2.445e-04) (24,1.304e-04) (26,6.959e-05) (28,3.713e-05) (30,1.981e-05)
		(32,1.057e-05) (34,7.629e-06) (36,7.629e-06) (38,7.629e-06) (40,7.629e-06)
	};
	\addlegendentry{$L=4096$}
	\end{semilogyaxis}
	\end{tikzpicture}
	\caption{Error $\Ecos_{P,L}(f)$ for the rational function
		$f(x)=(x^2+\tfrac{1}{100})^{-1}$ of Example~\ref{Ex:ratnum}: geometric decay
		in $P$ at the sharp rate $\e^{-\pi s_0}$ and plateau $2\,L^{-3/2}$ for sufficiently large $P$.}
	\label{fig:ratFT}
\end{figure}

\begin{example}
	\label{Ex:sechnum} \textbf{Fourier cosine/sine transform for meromorphic functions.}
	For the self-dual pair $f(x)=\operatorname{sech}(\pi x)$, $\fcos(v) =
	\tfrac12\operatorname{sech}(\pi v)$ considered  in Theorem $\ref{Ex:sech}$ and for the odd
	pair $f(x) = \tanh(\pi x)\operatorname{sech}(\pi x)$, $\fsin(v) =
	v\operatorname{sech}(\pi v)$ of Remark $\ref{rem-sinus}$, both functions
	decay at the geometric rate $2\pi s_0 = \pi$. With the balanced choice $L = P$,
	Figure $\ref{fig:sharpFT}$ shows geometric convergence over more than $25$ decades: the errors are
	reduced by a factor tending to $\e^{\pi} \approx 23.1$ per step of $2$ in $P$, in agreement
	with the rate $\e^{-\pi P/2}$ of \eqref{eq:sechbound} (the observed factors decrease
	monotonically from $33.2$ at $P=2$ to $23.8$ at $P=38$ and approach $\e^{\pi}$ from above, the
	deviation being the factor $\sqrt{(P+2)/P}$ caused by the scaling $L^{-1/2}$ in
	\eqref{eq:errors}). Since the errors fall far below the double precision unit roundoff, they are
	evaluated in the cancellation-free form $\max_{v}|\sigma_2(v) - \sigma_1(v)|$ of the proof of
	Theorem $\ref{Thm:samplingcospolynomial}$, with the common factor $\e^{-\pi P/2}$ split off.
	In both panels the plotted bound is that of
	Theorem $\ref{Thm:samplingcospolynomial}$ with the decay rates evaluated directly from
	\eqref{eq:decayrate+}. For the sine case this is essential, since by Remark $\ref{rem-sinus}$ the
	exponential row of Table $\ref{table1}$ does not apply to $\fsin$ at the critical rate $s=\pi$.
	In the sine case the bound exceeds the true error by the constant factor $4.0$, which can be
	read off exactly. The maximum in \eqref{eq:errors} is attained at $v=\frac{P}{2}$, where
	$s_{P,P}$ vanishes, so that $\Esin_{P,P}(f) = \frac{1}{\sqrt P}\big|\fsin\big(\frac P2\big)\big|$;
	moreover $\dpl(\fsin,P) = \big|\fsin\big(\frac P2\big)\big|$ and
	$\dpl(f,P) \approx \frac{2}{P}\big|\fsin\big(\frac P2\big)\big|$, so that the bound
	\eqref{eq:sinmain} equals $\frac{4}{\sqrt P}\big|\fsin\big(\frac P2\big)\big|$. In the cosine
	case the ratio grows essentially like $2P$, from $5.1$ at $P=2$ to $80.1$ at $P=40$, because
	the estimate $\max_v|\sigma_2(v)| \le L\,\dpl(f,L)$ in the proof of
	Theorem $\ref{Thm:samplingcospolynomial}$ loses a factor of order $P$. The true value is
	$\max_v|\sigma_2(v)| \le \frac{2}{\pi}\e^{-\pi P/2}$ by the proof of Theorem $\ref{Ex:sech}$,
	whereas $P\,\dpl(f,P) \approx 2P\,\e^{-\pi P/2}$.
\end{example}

\begin{figure}[htbp]
	\centering
	\begin{subfigure}[t]{0.495\textwidth}
		\centering
		\begin{tikzpicture}
		\begin{semilogyaxis}[
		width=0.90\textwidth, height=5.0cm,
		xlabel={$P$}, ylabel={error $E^{\cos}_{P,P}(f)$},
		grid=major,
		grid style={line width=0.3pt, draw=gray!30},
		major grid style={line width=0.4pt, draw=gray!60},
		xmin=2, xmax=40, ymin=1e-29, ymax=1,
		xtick={4,8,...,40},
		legend pos=north east, legend style={font=\footnotesize},
		line width=1.5pt
		]
		\addplot[red, mark=+, mark size=3pt] coordinates {
			(2,3.587e-02) (4,1.080e-03) (6,3.703e-05) (8,1.357e-06)
			(10,5.168e-08) (12,2.016e-09) (14,7.999e-11) (16,3.212e-12)
			(18,1.302e-13) (20,5.312e-15) (22,2.181e-16) (24,8.994e-18)
			(26,3.724e-19) (28,1.547e-20) (30,6.446e-22) (32,2.692e-23)
			(34,1.127e-24) (36,4.726e-26) (38,1.985e-27) (40,8.351e-29)
		};
		\addlegendentry{error $E^{\cos}_{P,P}(f)$}
		\addplot[blue, mark=o, mark size=3pt] coordinates {
			(2,1.833e-01) (4,9.337e-03) (6,4.612e-04) (8,2.219e-05)
			(10,1.048e-06) (12,4.888e-08) (14,2.256e-09) (16,1.034e-10)
			(18,4.707e-12) (20,2.133e-13) (22,9.625e-15) (24,4.329e-16)
			(26,1.941e-17) (28,8.681e-19) (30,3.874e-20) (32,1.726e-21)
			(34,7.673e-23) (36,3.406e-24) (38,1.510e-25) (40,6.687e-27)
		};
		\addlegendentry{bound of Thm.~\ref{Thm:samplingcospolynomial}}
		\end{semilogyaxis}
		\end{tikzpicture}
		\caption{$f(x)=\operatorname{sech}(\pi x)$}
		\label{fig:sech}
	\end{subfigure}
	\hfill
	\begin{subfigure}[t]{0.495\textwidth}
		\centering
		\begin{tikzpicture}
		\begin{semilogyaxis}[
		width=0.90\textwidth, height=5.0cm,
		xlabel={$P$}, ylabel={error $E^{\sin}_{P,P}(f)$},
		grid=major,
		grid style={line width=0.3pt, draw=gray!30},
		major grid style={line width=0.4pt, draw=gray!60},
		xmin=2, xmax=40, ymin=1e-29, ymax=1,
		xtick={4,8,...,40},
		legend pos=north east, legend style={font=\footnotesize},
		line width=1.5pt
		]
		\addplot[red, mark=+, mark size=3pt] coordinates {
			(2,6.100e-02) (4,3.735e-03) (6,1.977e-04) (8,9.864e-06)
			(10,4.766e-07) (12,2.256e-08) (14,1.053e-09) (16,4.865e-11)
			(18,2.230e-12) (20,1.016e-13) (22,4.603e-15) (24,2.078e-16)
			(26,9.345e-18) (28,4.191e-19) (30,1.875e-20) (32,8.367e-22)
			(34,3.727e-23) (36,1.657e-24) (38,7.358e-26) (40,3.262e-27)
		};
		\addlegendentry{error $E^{\sin}_{P,P}(f)$}
		\addplot[blue, mark=o, mark size=3pt] coordinates {
			(2,2.445e-01) (4,1.494e-02) (6,7.907e-04) (8,3.945e-05)
			(10,1.906e-06) (12,9.024e-08) (14,4.212e-09) (16,1.946e-10)
			(18,8.919e-12) (20,4.063e-13) (22,1.841e-14) (24,8.311e-16)
			(26,3.738e-17) (28,1.676e-18) (30,7.499e-20) (32,3.347e-21)
			(34,1.491e-22) (36,6.629e-24) (38,2.943e-25) (40,1.305e-26)
		};
		\addlegendentry{bound of Thm.~\ref{Thm:samplingcospolynomial}}
		\end{semilogyaxis}
		\end{tikzpicture}
		\caption{$f(x)=\tanh(\pi x)\,\operatorname{sech}(\pi x)$}
		\label{fig:sechsin}
	\end{subfigure}
	\caption{Geometric convergence at the critical rate for the balanced choice
		$L=P$, Example~\ref{Ex:sechnum}.}
	\label{fig:sharpFT}
\end{figure}

\subsection{Chebyshev sampling polynomials}

\begin{example}
	\label{Ex:abst} \textbf{Polynomial decay of Chebyshev coefficients.}
	The Chebyshev  coefficients of  $h(x) = |x|$ are of the form
	$a_{2k}[h] = \frac{4(-1)^{k+1}}{\pi(4k^2-1)}$ and $a_{2k+1}[h] =
	0$, $k \in \Np$, see \textnormal{\cite[Table A.2]{PlPoStTa23}}.
	Since only even indices contribute, the Chebyshev decay rate
	for even $P = 2m$ evaluates exactly by telescoping,
	$$
	\textstyle \dch(h,P) = \frac{4}{\pi}\sum\limits_{k=m+1}^\infty \frac{1}{4k^2-1}
	= \frac{2}{\pi(P+1)}\,.
	$$
	Theorem $\ref{Cor:Ech}$ with $L=P$ yields $\Rch_{P,P}(h) \leq
	\frac{4}{\pi(P+1)}$, i.e., algebraic convergence of order $P^{-1}$.
	Note that Corollary $\ref{Thm:Cpolydecay}$\,(1) is \emph{not} applicable here. Via
	Lemma $\ref{Lemma:Cpolydecay}$ it requires $h \in C^{r+1}(I)$ and the constant
	$\frac{4}{r}\|h^{(r+1)}\|_{C(I)}$, whereas $h(x)=|x|$ is not even in $C^1(I)$ and
	$\|h''\|_{C(I)}$ does not exist. The bound above is obtained from the exact evaluation of
	$\dch(h,P)$ instead, which is available because the Chebyshev coefficients of $|x|$ are known in
	closed form. This example thus illustrates that the estimate of Theorem $\ref{Cor:Ech}$ remains
	useful for functions of very low regularity, for which the smoothness-based
	Corollary $\ref{Thm:Cpolydecay}$ gives nothing.
	Figure $\ref{fig:abst}$(left) shows that
	the bound $2\,\dch(h,P)$ is about twice the true error.
\end{example}

\begin{figure}[htbp]
	\centering
	\begin{subfigure}[t]{0.495\textwidth}
		\centering
		\begin{tikzpicture}
		\begin{semilogyaxis}[
		width=0.90\textwidth, height=5.0cm,
		xlabel={$P$}, ylabel={error $R^{\mathrm{Ch}}_{P,P}(h)$},
		grid=major,
		grid style={line width=0.3pt, draw=gray!30},
		major grid style={line width=0.4pt, draw=gray!60},
		xmin=2, xmax=40,
		ymin=0.01, ymax=2,
		xtick={4,8,...,40},
		legend pos=north east,
		legend style={font=\footnotesize},
		line width=1.5pt
		]
		\addplot[red, mark=+, mark size=3pt] coordinates {
			(2,  0.2500)
			(4,  0.1422)
			(6,  0.0973)
			(8,  0.0737)
			(10, 0.0592)
			(12, 0.0495)
			(14, 0.0425)
			(16, 0.0372)
			(18, 0.0331)
			(20, 0.0298)
			(22, 0.0271)
			(24, 0.0248)
			(26, 0.0229)
			(28, 0.0213)
			(30, 0.0199)
			(32, 0.0186)
			(34, 0.0175)
			(36, 0.0166)
			(38, 0.0157)
			(40, 0.0149)
		};
		\addlegendentry{error $R^{\mathrm{Ch}}_{P,P}(h)$}
		\addplot[blue, mark=o, mark size=3pt] coordinates {
			(2,  0.4244)
			(4,  0.2546)
			(6,  0.1819)
			(8,  0.1415)
			(10, 0.1157)
			(12, 0.0979)
			(14, 0.0849)
			(16, 0.0749)
			(18, 0.0670)
			(20, 0.0606)
			(22, 0.0554)
			(24, 0.0509)
			(26, 0.0472)
			(28, 0.0439)
			(30, 0.0411)
			(32, 0.0386)
			(34, 0.0364)
			(36, 0.0344)
			(38, 0.0326)
			(40, 0.0311)
		};
		\addlegendentry{bound $2\delta_{\mathrm{Ch}}(h,P)$}
		\end{semilogyaxis}
		\end{tikzpicture}
		\caption{$h(x)=|x|$: polynomial decay $P^{-1}$.}
		\label{fig:abst-a}
	\end{subfigure}
	\hfill
	\begin{subfigure}[t]{0.495\textwidth}
		\centering
		\begin{tikzpicture}
		\begin{semilogyaxis}[
		width=0.90\textwidth, height=5.0cm,
		xlabel={$P$}, ylabel={error $R^{\mathrm{Ch}}_{P,P}(h)$},
		grid=major,
		grid style={line width=0.3pt, draw=gray!30},
		major grid style={line width=0.4pt, draw=gray!60},
		xmin=2, xmax=14,
		ymin=1e-17, ymax=10,
		xtick={2,4,...,14},
		legend pos=north east,
		legend style={font=\footnotesize},
		line width=1.5pt
		]
		\addplot[red, mark=+, mark size=3pt] coordinates {
			(2, 7.853e-2)
			(4, 1.066e-3)
			(6, 6.363e-6)
			(8, 2.203e-8)
			(10, 4.992e-11)
			(12, 8.016e-14)
			(14, 9.480e-17)
		};
		\addlegendentry{error $R^{\mathrm{Ch}}_{P,P}(h)$}
		\addplot[blue, mark=o, mark size=3pt] coordinates {
			(2, 1.008e-1)
			(4, 1.183e-3)
			(6, 6.819e-6)
			(8, 2.323e-8)
			(10, 5.212e-11)
			(12, 8.277e-14)
			(14, 9.787e-17)
		};
		\addlegendentry{bound $2\delta_{\mathrm{Ch}}(h,P)$}
		\end{semilogyaxis}
		\end{tikzpicture}
		\caption{$h(x)=\e^x$: super-exponential decay.}
		\label{fig:expt}
	\end{subfigure}
	\caption{Error $\Rch_{P,P}(h)$ and bound $2\,\dch(h,P)$ for
		Example~\ref{Ex:abst} (left) and Example~\ref{Ex:expt} (right).}
		\label{fig:abst}
\end{figure}

\begin{example} \textbf{Super exponential decay of Chebyshev coefficients.}
	\label{Ex:expt}
We consider $h(x) = \e^x$ with $a_n[h] = 2 I_n(1)$ (see \textnormal{\cite[Table A.2]{PlPoStTa23}}), i.e.,
	$a_0[h]\approx 2.532$,
	$a_1[h]\approx 1.130$, $a_2[h]\approx 0.271$, $a_3[h]\approx 0.0443$,
	$a_4[h]\approx 5.474\times10^{-3}$, $a_5[h]\approx 5.429\times10^{-4}$. By
	$I_n(1) \leq \frac{\e}{2^n \,n!}$, the coefficients decay super-exponentially,
	so that the Chebyshev decay rate $\dch(h,P) = 2\sum\limits_{n=P+1}^\infty I_n(1)$
	decreases many orders of magnitude per unit increase in $P$, see
	Figure~$\ref{fig:abst}$ (right), and $(\ref{eq:expodecayerror})$ applies with any
	$\sigma>0$. Here the common range $P=2,4,\ldots$ of the other examples has to be
	truncated at $P=14$, where the error already reaches the level of the double precision
	unit roundoff.
\end{example}

\begin{example} \textbf{Geometric decay of Chebyshev coefficients.}
	\label{Ex:runge}
	The Runge function $h(x)=\tfrac{1}{1+x^2}$ is real-analytic with simple poles
	$z_{1,2} = \pm\i$ and $w_{1,2} = \pm\i(1+\sqrt2)$, so that $\rho_0 = 1+\sqrt2$.
	We obtain
	$$
	\textstyle a_n[h] = \tfrac{1}{\sqrt 2}\,\big(w_1^{-n} + w_2^{-n}\big) = \sqrt 2\,(1 + \sqrt 2)^{-n}\, \cos\tfrac{n \pi}{2}\,, \quad n \in \Np\,,
	$$
	i.e., $a_{2k}[h] = (-1)^k\sqrt{2}q^k$ and $a_{2k+1}[h] = 0$ for  $k \in \Np$
	with $q = (1 + \sqrt 2)^{-2} = 3-2\sqrt{2} \approx 0.1716$,  see \textnormal{\cite[Table A.2]{PlPoStTa23}},
	yielding  the sharp estimate $\big|
	a_n[h]\big| \leq \sqrt 2\,\rho_0^{-n}$. The Chebyshev decay rate for even
	$P = 2m$,
	$$
	\textstyle \dch(h,P) = \frac{\sqrt{2}\,q^{m+1}}{1-q} = \frac{\sqrt{2}\,q}{1-q}\,q^{P/2}\,,
	$$
	decreases geometrically: increasing $P$ by $2$ multiplies $q^{P/2}$ by
	$q \approx 0.1716$, while $q^{1/2}=\rho_0^{-1}\approx 0.4142$ is the factor per \emph{unit}
	increase in $P$. The data underlying Figure~$\ref{fig:runge}$ confirm the factor
	$\approx 0.16$ per step of $2$ in $P$, e.g.\ $8.579\cdot10^{-2}\to1.336\cdot10^{-2}$
	from $P=2$ to $P=4$.
	With $\sigma = \log \rho_0 = -\tfrac{1}{2}\log q \approx 0.881$,
	Corollary $\ref{Thm:Cpolydecay}$\,(2) with $L=P$ and $c=\sqrt{2}$ provides
	$\Rch_{P,P}(h)\leq\frac{2\sqrt{2}}{\e^{\sigma}-1}\,\e^{-\sigma
	P} = 2\,\rho_0^{-P}$, since $\e^{\sigma}-1 = \rho_0-1 = \sqrt2$. Note that the constants
	in Corollary $\ref{Thm:Cpolydecay}$\,(2) are $c$ and $\sigma$, whereas Theorem
	$\ref{Thm:rational}$ is formulated with $\gamma_0$ and $\rho_0$.
	Figure~$\ref{fig:runge}$ confirms the geometric convergence on the same range
	$P=2,4,\ldots,40$ as in Figures $\ref{fig:ratFT}$--$\ref{fig:abst}$; for $P\ge 36$ the
	error is obtained from the coefficient representation \eqref{eq:errordecompP} instead of
	forming the difference $h-t_{P,P}$ directly.
\end{example}

\begin{figure}[htbp]
	\centering
	\begin{tikzpicture}
	\begin{semilogyaxis}[
	width=0.58\textwidth, height=5.6cm,
	xlabel={$P$}, ylabel={error $R^{\mathrm{Ch}}_{P,P}(h)$},
	grid=major,
	grid style={line width=0.3pt, draw=gray!30},
	major grid style={line width=0.4pt, draw=gray!60},
	xmin=2, xmax=40,
	ymin=1e-16, ymax=0.15,
	xtick={4,8,...,40},
	legend pos=north east,
	legend style={font=\footnotesize},
	line width=1.5pt
	]
	\addplot[red, mark=+, mark size=3pt] coordinates {
		(2,  8.579e-2)
		(4,  1.336e-2)
		(6,  2.394e-3)
		(8,  4.326e-4)
		(10, 7.072e-5)
		(12, 1.271e-5)
		(14, 2.174e-6)
		(16, 3.714e-7)
		(18, 6.442e-8)
		(20, 1.094e-8)
		(22, 1.893e-9)
		(24, 3.248e-10)
		(26, 5.553e-11)
		(28, 9.579e-12)
		(30, 1.639e-12)
		(32, 2.816e-13)
		(34, 4.835e-14)
		(36, 8.272e-15)
		(38, 1.424e-15)
		(40, 2.439e-16)
	};
	\addlegendentry{error $R^{\mathrm{Ch}}_{P,P}(h)$}
	\addplot[blue, mark=o, mark size=3pt] coordinates {
		(2,  1.01e-1)
		(4,  1.73e-2)
		(6,  2.96e-3)
		(8,  5.08e-4)
		(10, 8.72e-5)
		(12, 1.50e-5)
		(14, 2.57e-6)
		(16, 4.40e-7)
		(18, 7.547e-8)
		(20, 1.295e-8)
		(22, 2.222e-9)
		(24, 3.812e-10)
		(26, 6.540e-11)
		(28, 1.122e-11)
		(30, 1.925e-12)
		(32, 3.303e-13)
		(34, 5.667e-14)
		(36, 9.723e-15)
		(38, 1.668e-15)
		(40, 2.862e-16)
	};
	\addlegendentry{bound $2\delta_{\mathrm{Ch}}(h,P)$}
	\end{semilogyaxis}
	\end{tikzpicture}
	\caption{Error $\Rch_{P,P}(h)$ and bound $2\,\dch(h,P)$ for
		Example~\ref{Ex:runge} with $h(x)=\tfrac{1}{1+x^2}$.}
	\label{fig:runge}
\end{figure}

\end{document}